%% file: main.tex
\documentclass[final,12pt]{colt2026} %

\title[Revisiting Subgradient Dominance in Robust MDPs]{Revisiting Subgradient Dominance in Robust MDPs:\\Counterexamples, Hardness, and Sufficient Conditions}
\usepackage{natbib}
\usepackage{times}
\usepackage{wrapfig}
\usepackage{widebar}
\usepackage{rl-notations}

\coltauthor{
\Name{Toshinori Kitamura} \Email{toshinor@ualberta.ca}\\
\addr University of Alberta\\
\Name{Arnob Ghosh} \Email{arnob.ghosh@njit.edu}\\
\addr New Jersey Institute of Technology \\
\Name{Alex Ayoub} \Email{aayoub@ualberta.ca}\\
\Name{Thang D. Chu} \Email{thang@ualberta.ca}\\
\addr University of Alberta\\
\Name{Csaba Szepesv\'{a}ri} \Email{szepesva@ualberta.ca}\\
\addr University of Alberta
}

\begin{document}

\maketitle

\begin{abstract}%
  Projected subgradient descent (PSD) has gained popularity for solving robust Markov decision processes (RMDPs) because it applies to a broader class of uncertainty sets than traditional dynamic programming.
  Existing work claims that RMDPs with a general compact uncertainty set satisfy the \emph{subgradient dominance property}, under which exact PSD converges to an $\varepsilon$-optimal policy in a polynomial number of updates (e.g., \citealp{wang2023policy}). We show that these claims are incorrect. Even when the uncertainty set has cardinality two, the RMDP objective is \textbf{not} subgradient-dominant and can admit suboptimal strict local minima.
  Moreover, we prove that finding an $\varepsilon$-optimal policy can be NP-hard even in settings where subgradients are efficiently computable: {(\rm i)} finite transition uncertainty sets and {\rm(ii)} $sa$-rectangular finite transition uncertainty sets with finite cost uncertainty sets.
  Finally, we identify two conditions under which RMDPs do satisfy subgradient dominance: when, for each policy, either the worst-case transition kernel or the worst-case action-value function is unique.
\end{abstract}

\begin{keywords}%
  Robust MDPs, Subgradient Dominance, NP-hard
\end{keywords}

\section{Introduction}

\looseness=-1
Dealing with environmental model uncertainty is crucial for practical decision-making problems \citep{taguchi1986introduction}. 
Robust Markov decision processes (RMDPs) provide a framework for designing policies that are robust to such uncertainty, where transition kernels and cost functions are chosen adversarially from an uncertainty set.
Recently, projected subgradient descent (PSD) has emerged as a popular approach for solving RMDPs \citep{wang2023policy,wang2022policy,kumar2024policy,kitamura2025near}.
PSD methods are particularly appealing for their broad applicability: they accommodate general uncertainty sets for which subgradients can be efficiently computed (e.g., finite sets), whereas traditional dynamic programming (DP) methods typically require stronger structural assumptions on the uncertainty set \citep{iyengar2005robust,wiesemann2013robust}.

\looseness=-1
Existing RMDP literature guarantees the performance of PSD by establishing the \emph{subgradient dominance property}, under which every stationary point is globally optimal (e.g., \citealp{wang2023policy,kitamura2025near}). These results appear mathematically sound, as they sidestep classical NP-hardness results for RMDPs, such as those of \citet{wiesemann2013robust}, by assuming access to exact subgradients.
Unfortunately, we show that this line of analysis is flawed, rendering the claimed PSD guarantees invalid.
In particular, we prove that \textbf{the RMDP objective is not subgradient-dominant in general}. Even a very simple RMDP with a finite uncertainty set of cardinality two admits a suboptimal strict local minimum, and PSD can get stuck in its neighborhood (\cref{example:RMDP-failure}).

\looseness=-1
Moreover, we establish a stronger negative result: finding an $\varepsilon$-optimal policy can be NP-hard even in settings where subgradients are efficiently computable, namely {\rm (i)} finite transition uncertainty sets (\cref{proposition:NP-hardness}) and {\rm (ii)} $sa$-rectangular finite transition uncertainty sets with finite cost uncertainty sets (\cref{proposition:NP-hard-sa-rect}). 
This hardness result rules out polynomial-time algorithms for approximately solving general RMDPs unless $\text{P}=\text{NP}$.

\looseness=-1
Despite these negative results, PSD can succeed under additional structure. We identify two sufficient conditions under which PSD finds $\varepsilon$-optimal policies (\cref{theorem:PSD-succeeds}): when, for each policy, either the worst-case transition kernel or the worst-case action-value function is unique.
These conditions cover several previously studied settings, including regularized RMDPs \citep{yang2023robust,zhang2024soft,derman2021twice}, $sa$- and $r$-rectangular uncertainty sets \citep{iyengar2005robust,nilim2005robust,goyal2023robust}, cost-robust MDPs \citep{Brekelmans2022,gadot2024solving} and convex MDPs \citep{zhang2020variational,zahavy2021reward}.
A summary of our results is provided in \cref{table:results-summary}.
Overall, our findings clarify existing misunderstandings about PSD in RMDPs and provide precise conditions under which its guarantees can be restored.

\looseness=-1
The remainder of the paper is organized as follows. \cref{sec:problem-setup} introduces the RMDP formulation and the PSD algorithm. \cref{sec:failure of PSD} presents our negative results for finite uncertainty sets. \cref{sec:PSD-succeeds} establishes sufficient conditions for the success of PSD. Finally, \cref{sec:discussion} situates our contributions within the related literature and discusses the limitations of our work.

\begin{table}[t]
\small
\centering
\begin{tabular}{|m{5.3cm}|m{2.5cm}|m{7.3cm}|}
\hline
\textbf{Setting} & \textbf{Subgrad. dom.?} & \textbf{Example} \\
\hline
\rowcolor{gray!15}
Finite $\cP$ $\times$ Singleton $\cC$ & No, NP-hard \newline {\scriptsize(\cref{proposition:NP-hardness})} & \cref{example:RMDP-failure}\\
\hline
\rowcolor{gray!15}
$sa$-rectangular finite $\cP$ $\times$ Finite $\cC$ & No, NP-hard \newline {\scriptsize(\cref{proposition:NP-hard-sa-rect})} & Robust constrained MDPs with rectangular $\cP$ \newline {\scriptsize \citep{kitamura2025near,marectified,chen2025robust}}\\
\hline
Worst-case transition kernel is unique for each policy {\scriptsize(\cref{assm:unique-worst-case-occupancy})} & Yes {\scriptsize(\cref{theorem:PSD-succeeds})}& MDPs {\scriptsize\citep{agarwal2021theory}}, \newline
Regularized RMDPs {\scriptsize\citep{yang2023robust,zhang2024soft}}, \newline
Cost-robust MDPs {\scriptsize\citep{husain2021regularized}}, \newline
Convex MDPs {\scriptsize\citep{zahavy2021reward}}\\
\hline
Worst-case $Q$ function is unique for each policy {\scriptsize(\cref{assm:unique-robust-value})} & Yes {\scriptsize(\cref{theorem:PSD-succeeds})} & 
MDPs {\scriptsize\citep{agarwal2021theory}}, \newline
Regularized RMDPs {\scriptsize\citep{yang2023robust,zhang2024soft}}, \newline
$sa$ and $r$-rectangular RMDPs {\scriptsize\citep{iyengar2005robust,goyal2023robust}} \\
\hline
$s$-rectangular RMDPs & Open problem & \\
\hline
\end{tabular}
\caption{\small Summary of the subgradient dominance property across different RMDP settings. $\cC$ and $\cP$ denote the cost and transition uncertainty sets, respectively.
PSD is guaranteed to find $\varepsilon$-optimal policies in settings marked ``Yes'', while those marked ``No'' are NP-hard even to approximate (\cref{proposition:NP-hardness,proposition:NP-hard-sa-rect}).
Highlighted settings were previously believed to be subgradient-dominant \citep{wang2023policy,li2023policy,wang2024policy,kitamura2025near}.
}
\label{table:results-summary}
\end{table}

\input{sections/preliminary.tex}
\input{sections/failure-of-PSD.tex}
\input{sections/rectangular.tex}
\input{sections/discussion.tex}

\vskip 0.2in
\bibliography{main}

\appendix
\crefalias{section}{appendix}

\input{sections/useful-lemma.tex}

\end{document}

%% file: sections/preliminary.tex
\section{Preliminaries}\label{sec:problem-setup}

\paragraph{Basic notation.}
\looseness=-1
The probability simplex over a finite set $\S$ is denoted by $\Delta(\S)$.
For an integer $n$, let $[n]\df \brace*{1,\dots, n}$.
We define $\bzero \df \paren{0, \ldots, 0}^\top$ and $\bone \df \paren{1, \ldots, 1}^\top$.
For a set $\mathcal{X} \subset \R^d$, we define $\dist(\bzero; \mathcal{X}) \df \inf_{x \in \mathcal{X}} \norm{x}_2$.
For $\cX \subset \R^d$, $\conv \cX$ denotes its convex hull.
$\delta_\cX(x)$ denotes the indicator function which takes value $0$ if $x \in \cX$ and $+\infty$ otherwise.
All scalar operations and inequalities applied to vectors or functions are understood elementwise.

\looseness=-1
For a function $f : \R^d \to \R$, we let $\partial f(x)$ denote the set of Fr\'echet subgradients of $f$ at $x \in \R^d$ (see Definition~8.3 in \citealp{rockafellar2009variational}).
If $\partial f(x)$ is a singleton, we denote its element by $\nabla f(x)$ and refer to it as the gradient of $f$ at $x$.
A point $x$ is said to be \emph{stationary} for $f$ if $\bzero \in \partial f(x)$.

\subsection{Robust MDPs}\label{subsec:RMDP}

\looseness=-1
An infinite-horizon tabular RMDP is defined by a tuple $(\S, \A, \mu, \gamma, \cU)$, where $\S$ and $\A$ are finite state and action spaces, respectively; $\mu \in \Delta(\S)$ is the initial state distribution; and $\gamma \in [0,1)$ is the discount factor.
The set $\cU$ is a compact uncertainty set of cost functions and transition kernels.
We refer to a pair $(c, P) \in \cU$ as a \emph{model}.
For each $(c, P) \in \cU$, the cost function $c : \S \times \A \to [0,1]$ specifies the cost of taking action $a$ in state $s$, and the transition kernel $P$ satisfies $P(\cdot \mid s,a) \in \Delta(\S)$.

\looseness=-1
When the uncertainty set decomposes as $\cU = \cC \times \cP$, we use $\cC$ and $\cP$ to denote the cost and transition sets, respectively.
If both $\cC$ and $\cP$ are singletons, the RMDP reduces to a standard MDP.
If $\cP$ is a singleton, the problem is known as a \emph{cost-robust MDP} \citep{husain2021regularized,gadot2024solving}, which encompasses a variety of decision-making problems (e.g., convex MDPs; see \cref{sec:PSD-succeeds-general-cost}).

\looseness=-1
A (stationary Markov) policy $\pi$ is a probability kernel such that $\pi\paren{\cdot \given s} \in \Delta(\A)$ specifies the action distribution at state $s \in \S$. The set of all policies is denoted by $\Pi \subset \R^{\aS\times \aA}$, which corresponds to the \emph{direct parameterization} policy class \citep{agarwal2021theory}. 

\looseness=-1
Given a transition kernel $P$, the occupancy measure $\occ{\pi}_{P}\in\SA \to \R$ represents the expected $\gamma$-discounted visitation frequency of each state--action pair under policy:
$$
  \occ{\pi}_{P}(s, a) =
  (1-\gamma)
  \E^\pi_P\brack*{
    \sum_{h=0}^\infty
    \gamma^{h}
    \I\brace*{s_h=s, a_h = a}}\;,
$$
where the expectation is taken over trajectories generated by
$s_0 \sim \mu$, $a_h \sim \pi \paren{\cdot \given s_h}$, and $s_{h+1} \sim P \paren{\cdot \given s_h, a_h}$.
The induced state occupancy measure is defined as $\socc{\pi}_{P}(s) \df \sum_{a \in \A} \occ{\pi}_{P}(s,a)$.

\looseness=-1
The value function $\vf{\pi}_{c, P} : \S \to \R$ (and the action-value function $\qf{\pi}_{c, P} : \SA \to \R$) represents the expected total cost under policy $\pi$ starting from a state $s$ (and from a state--action pair $(s,a)$, respectively). They are the unique solution to the following \emph{Bellman equations} \citep{puterman1990markov}:
\begin{equation}\label{eq:Bellman-equation}
  \begin{aligned}
    \vf{\pi}_{c, P}(s) &= c^\pi (s) + \gamma \sum_{s' \in \S}P^\pi\paren{s'\given s}\vf{\pi}_{c, P}(s')\quad \forall s \in \S
    \\
    \text{and}\quad
    \qf{\pi}_{c, P}(s, a) &= c(s, a) + \gamma \sum_{s' \in \S}P\paren*{s'\given s, a}\vf{\pi}_{c, P}(s')\quad \forall (s, a) \in \SA\;,
  \end{aligned}
\end{equation}
where we use shorthands $c^\pi(s) \df \sum_a \pi\paren{a\given s} c(s, a)$ and $P^\pi\paren{s'\given s} \df \sum_a P\paren{s'\given s, a} \pi\paren{a\given s}$. We define the advantage function as $A^\pi_{c, P}(s, a) \df \vf{\pi}_{c, P}(s) - \qf{\pi}_{c, P}(s, a)$.

\looseness=-1
The robust total cost of a policy $\pi$ is defined as
\begin{equation}\label{eq:robust-total-cost-def}
  J_{\cU}(\pi)= \max_{(c, P) \in \cU} \E^\pi_P\brack*{
    \sum_{h=0}^\infty
    \gamma^{h}
    c(s_h, a_h)}
  = \max_{(c, P) \in \cU} \sum_{s \in \S}\mu(s) \vf{\pi}_{c, P}(s)\;.
\end{equation}
If $\cU = \cC \times \cP$ and either $\cC$ or $\cP$ is a singleton, we write $\cU = \cC$ or $\cU = \cP$ for simplicity.
We denote $J_{c,P} \df J_{\{(c,P)\}}$ as the total cost under a fixed model and refer to $J_{\cU}$ as the robust total cost.

\looseness=-1
The goal of an RMDP is to identify an optimal policy $\pi^\star$ that minimizes the robust total cost:
\begin{align}\label{eq:RMDP}
  \text{(Robust MDP)}\quad\quad
  \pi^\star \in \argmin_{\pi \in \Pi} J_{\cU}(\pi)\;.
\end{align}
We call a policy $\pi$ \emph{$\varepsilon$-optimal} if it satisfies $J_{\cU}(\pi) - J_{\cU}(\pi^\star) \leq \varepsilon$.

\subsection{Rectangularity for Transition Uncertainty Sets}\label{sec:rectangularity}

\looseness=-1
Without any assumptions on the transition set $\cP$, computing the robust total cost $J_{\cP}(\pi)$ is NP-hard, even to approximate \citep{wiesemann2013robust}.
A widely adopted regularity condition is $sa$-rectangularity \citep{iyengar2005robust,nilim2005robust}, defined as:
$$
\text{$sa$-rectangularity} \quad 
\cP = \times_{s, a}\; \cP_{s, a} \quad \text{where} \quad \cP_{s, a} \subseteq \Delta(\S)\;.
$$
Here, $\times_{s,a}$ denotes the Cartesian product over all state--action pairs.
Intuitively, $sa$-rectangularity allows the adversary to choose worst-case transitions independently for each state--action pair.

\looseness=-1
Because this assumption can yield overly conservative policies, several works have proposed weaker notions of rectangularity, most notably \emph{$s$-rectangularity} \citep{wiesemann2013robust} and \emph{$r$-rectangularity} \citep{goyal2023robust}:
\begin{align}
\text{$s$-rectangular} \quad &\cP = \times_{s}\; \cP_{s} \quad \text{where} \quad \cP_{s} \subseteq \Delta(\S)^{\aA}\;,\label{eq:s-rectangular}\\
\text{$r$-rectangular} \quad &\cP = \brace*{
P\given P\paren{s'\given s, a} = \sum^r_{i=1} \phi_i(s, a) w_i(s')
}
\;\text{ where } (w_1, \dots, w_r) \in \mathcal{W}_1\times \dots \times \mathcal{W}_r\;. \label{eq:r-rectangular}
\end{align}
Here, $\phi(s,a) \in \Delta([r])$ and $w_i(\cdot) \in \Delta(\S)$.
Unlike $sa$-rectangularity, $s$-rectangular uncertainty sets allow the adversary to select worst-case transitions independently across states, but not across actions.
The $r$-rectangular structure resembles low-rank MDP models \citep{jin2020provably}, in which the transition kernel admits a linear decomposition via a feature map $\phi$ of dimension $r$.
Although both notions relax $sa$-rectangularity, they are mutually exclusive: neither $s$-rectangularity nor $r$-rectangularity contains the other \citep[Proposition~2.3]{goyal2023robust}.

\looseness=-1
These forms of rectangularity enable efficient computation of the robust total cost $J_{\cP}(\pi)$ using dynamic programming (DP) methods.
Moreover, an $\varepsilon$-optimal policy can be computed in time polynomial in $\aS$, $\aA$, $(1-\gamma)^{-1}$, $\log \varepsilon^{-1}$, and the description length of $\cP$ \citep{wiesemann2013robust,goyal2023robust}.

\subsection{Projected Subgradient Descent for Robust MDPs}\label{subsec:math-notation}

\looseness=-1
Beyond DP methods for rectangular uncertainty sets, an alternative line of work has explored \emph{projected subgradient descent} (PSD) for solving RMDPs \citep{wang2023policy,kitamura2025near,li2023policy,wang2024policy}. 
Unlike DP-based approaches, PSD does not rely on strong structural assumptions on the uncertainty sets $\cC$ or $\cP$; instead, it requires access only to a worst-case policy subgradient.
This flexibility makes PSD applicable to classes of RMDPs that are not amenable to DP methods.
For instance, when the transition set is finite (e.g., $\cP = \{P_1, P_2\}$), PSD is applicable whereas DP methods generally are not.

\looseness=-1
At iteration $t \in \N$, PSD updates $\pi_t$ to a new policy $\pi_{t+1}$ by:
\begin{equation}\label{eq:pol-grad-update}
  \begin{aligned}
    \pi_{t+1} \df \proj_{\Pi}\paren*{\pi_t - \eta g_t}\quad \text{ where }\; g_t \in \partial J_{\cU} (\pi_t) \quad \forall t \in [T]\;,
  \end{aligned}
\end{equation}
\looseness=-1
where $\eta > 0$ is the learning rate and $\proj_{\Pi}$ denotes the Euclidean projection onto $\Pi$, which can be implemented efficiently \citep{duchi2008efficient}.
The subgradient $g_t$ can be efficiently computed when a worst-case model in \cref{eq:RMDP} is available:

\begin{lemma}\label{lemma:policy subgradient}
  Let $\cU^\pi \df \argmax_{(c, P) \in \cU} J_{c, P}(\pi)$ denote the set of worst-case models under policy $\pi$. For any $\pi \in \Pi$, it holds that
  \begin{align}
    &\partial J_{\cU} (\pi) =
    \conv\brace*{
      \nabla J_{c, P}(\pi) \given (c, P) \in \cU^
      \pi
    }\label{eq:J-subgradient}
    \\
    \text{where}\quad &
    \paren*{\nabla J_{c, P}(\pi)}(s, a)=
    \frac{1}{1-\gamma} \socc{\pi}_{P}(s) \qf{\pi}_{c, P}(s, a) \quad
    \forall (s, a) \in \SA\;.\label{eq:J-gradient}
  \end{align}
\end{lemma}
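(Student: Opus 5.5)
The plan has two parts. First establish the gradient formula \eqref{eq:J-gradient} for a fixed model. Then obtain \eqref{eq:J-subgradient} from a Danskin-type formula for the Fréchet subdifferential of a pointwise maximum of smooth functions.

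For a fixed $(c,P)$, the map $\pi\mapsto J_{c,P}(\pi)=\mu^\top (I-\gamma P^\pi)^{-1}c^\pi$ is a rational function of the entries of $\pi$. Its denominator $\det(I-\gamma P^\pi)$ does not vanish on a neighborhood of $\Pi$, so the map extends smoothly to an open set containing $\Pi$. Differentiating the Bellman equation \eqref{eq:Bellman-equation} in $\pi(a\mid s)$ and unrolling gives the standard performance-difference/policy-gradient identity
\[
\partial J_{c,P}(\pi)/\partial \pi(a\mid s)=\tfrac{1}{1-\gamma}\socc{\pi}_P(s)\qf{\pi}_{c,P}(s,a).
\]
This is the direct-parameterization gradient of \citet{agarwal2021theory}. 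Its derivative is moreover continuous jointly in $(\pi,c,P)$, because $\socc{\pi}_P$ and $\qf{\pi}_{c,P}$ are continuous in $(\pi,c,P)$ whenever $\gamma<1$.

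Next, write $J_{\cU}(\pi)=\max_{u\in\cU} f_u(\pi)$ with $f_u=J_{c,P}$. Here $\cU$ is compact, each $f_u$ is $C^1$, and $(u,\pi)\mapsto f_u(\pi),\nabla f_u(\pi)$ are jointly continuous. Under these conditions $J_\cU$ is lower-$C^1$, hence subdifferentially regular. By Danskin's theorem (equivalently Theorem~10.31 in \citealp{rockafellar2009variational}), $J_\cU$ has directional derivative
\[
J_\cU'(\pi;d)=\max_{u\in\cU^\pi}\langle\nabla f_u(\pi),d\rangle.
\]
For a regular function, the Fréchet subdifferential equals the set of $g$ with $\langle g,d\rangle\le J_\cU'(\pi;d)$ for all $d$. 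This set is exactly $\conv\{\nabla f_u(\pi):u\in\cU^\pi\}$. That set is already closed, because $\cU^\pi$ is a compact subset of $\cU$, so its continuous image is compact and the convex hull of a compact set in finite dimension is compact. Identifying the support function then yields \eqref{eq:J-subgradient}.

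The main obstacle is domain bookkeeping. $\Pi$ has empty interior in $\R^{\aS\times\aA}$, so $\partial J_\cU$ must be interpreted for $J_\cU$ viewed as a function on all of $\R^{\aS\times\aA}$. This means using the natural smooth extension of each $f_u$ rather than $J_\cU+\delta_\Pi$. I would make this explicit: define $f_u$ on an open neighborhood where $I-\gamma P^\pi$ is invertible, for instance where $\pi\ge -\epsilon$ entrywise and rows sum to within $\epsilon$ of one so that $\|\gamma P^\pi\|_\infty<1$. The max formula and regularity are then applied there. I would also check carefully the joint continuity in $u$ that Danskin's theorem requires, which holds by the resolvent argument above.
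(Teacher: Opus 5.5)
Your argument is correct and has the same skeleton as the paper's: the policy gradient theorem for \eqref{eq:J-gradient} and a Danskin-type formula for \eqref{eq:J-subgradient}. Where you differ is in how the nonconvex Danskin step is justified. The paper's appendix lemma takes a uniform smoothness constant $L$ for $\pi\mapsto J_{c,P}(\pi)$ (Lemma~54 of \citet{agarwal2021theory}) so that every $J_{c,P}+\frac{L}{2}\|\cdot\|_2^2$ is convex. It then applies the classical convex Danskin theorem to the shifted maximum and removes the quadratic via $\partial\bigl(F+\frac{L}{2}\|\cdot\|_2^2\bigr)(\pi)=\partial F(\pi)+L\pi$. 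You instead use the lower-$C^1$ framework (Theorem~10.31 of \citet{rockafellar2009variational}), or equivalently the directional-derivative formula plus the support-function description of Fr\'echet subgradients. Your route needs only $C^1$ dependence with gradients jointly continuous in $(\pi,c,P)$, not a uniform Lipschitz-gradient constant. The paper's route is more elementary (a textbook convex result plus a sum rule) and reuses the constant $\ell$ it needs anyway for the weak-convexity analysis of PSD. Your explicit domain bookkeeping is a real improvement. The paper states its Danskin lemma for functions that are $L$-smooth on all of $\R^n$, but the rational extension of $J_{c,P}$ is only well behaved on a neighborhood of $\Pi$ where $I-\gamma P^\pi$ is invertible uniformly in $P$, so the paper's argument has to be read in a localized form. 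Fixing the natural extension (linear in $\pi$ inside $c^\pi$ and $P^\pi$) is also what makes \eqref{eq:J-gradient} well defined. Any other smooth extension that agrees on $\Pi$ changes each gradient by a vector that is constant across actions within each state.
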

\cref{eq:J-subgradient} follows from a version of Danskin's theorem (\cref{lemma:danskin's theorem} in \cref{appendix:useful-lemmas}) and \cref{eq:J-gradient} is known as the \emph{policy gradient theorem} (e.g., \citealp{xiao2022convergence}).

\looseness=-1
The convergence behavior of the PSD update in \cref{eq:pol-grad-update} can be analyzed using the \emph{Moreau envelope} of \emph{weakly convex} functions.

\begin{definition}[Moreau envelope]\label{def:moreau-envelope}
  For a function $f: \R^d \to \R$ and a parameter $\nu > 0$, the Moreau envelope is defined by $\ME_\nu \comp f: \R^d \to \R$ such that
  $$
    \ME_\nu \comp f(x) = \min_{y \in \R^d} \brace*{f(y) + \frac{1}{2\nu} \norm*{x - y}^2_2}\;.
  $$
  The minimal point is called the proximal point: $\prox_{\nu f}(x) = \argmin_{y \in \R^d} \brace*{f(y) + \frac{1}{2\nu} \norm*{x - y}^2_2}$.
\end{definition}

\begin{definition}[Weak convexity; \citealp{atenas2023unified}, Definition 2.1]\label{def:weak-convex}
  \looseness=-1
  A function $f: \R^d \to \R$ is called $\omega$-weakly convex if there exists $\omega \geq 0$ such that $f(\cdot) + \frac{\omega}{2}\norm*{\cdot}_2^2$ is convex.
\end{definition}
\looseness=-1
Weak convexity is a mild regularity condition satisfied by many functions in optimization.
In particular, the pointwise maximum of smooth functions is weakly convex \citep[Proposition~2.4]{atenas2023unified}; hence, the robust total cost $J_{\cU}$ is weakly convex.
The Moreau envelope provides a smooth approximation of a weakly convex function.
Specifically, if $f$ is proper and $\omega$-weakly convex, then for any $\nu \in (0,1/\omega)$, the Moreau envelope $\ME_\nu \comp f$ is differentiable, with gradient given by \citep[Lemma~4.3]{drusvyatskiy2019efficiency}:
\begin{equation}\label{eq:Moreau-grad}
\nabla \ME_\nu \comp f(x) = \frac{1}{\nu} \paren*{x - \prox_{\nu f}(x)} \quad \forall x \in \R^d\;.
\end{equation}

\looseness=-1
Combining \cref{eq:Moreau-grad} with the definition of the Moreau envelope yields
\begin{equation}\label{eq:Moreau-grad-to-subgrad}
\dist(\bzero; \partial f(\prox_{\nu f}(x))) \leq \norm*{\nabla \ME_\nu \comp f(x)}_2
= \frac{1}{\nu} \norm*{x - \prox_{\nu f}(x)}_2\;.
\end{equation}
Thus, a small Moreau-envelope gradient implies that $x$ is close to a point $\prox_{\nu f}(x)$ that is nearly stationary for $f$. 
For further discussion of weakly convex functions and the Moreau envelope, see
\citet{davis2019stochastic,renaud2025moreau,rockafellar2009variational}.

\looseness=-1
Applying these tools to RMDPs yields convergence guarantees for PSD.
Define the extended objective $\barJ_{\cU}: \pi \in \R^{\aS\times \aA}\mapsto J_{\cU}(\pi) + \delta_{\Pi}(\pi)$ which extends the domain of the robust total cost to $\R^{\aS\times \aA}$.
Standard analyses of PSD for weakly convex functions imply that the iterates produced by \cref{eq:pol-grad-update} converge to near-stationary points of the Moreau envelope (see \cref{sec:subgrad-dom-Moreau-derivation}).

\begin{lemma}\label{lemma:stationary-convergence}
    When $\eta = 1/\sqrt{T}$, the PSD iterates in \cref{eq:pol-grad-update} satisfies that
    \begin{align}
    \min_{t \in [T]} \norm*{\nabla (\ME_{\frac{1}{2\ell}} \comp \barJ_{\cU})(\pi_t)}_2
    \leq T^{-1/4} \sqrt{\frac{4}{1-\gamma} + \frac{4 \gamma\aA^2}{(1-\gamma)^7}}\;.
    \end{align}
\end{lemma}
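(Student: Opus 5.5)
We follow the standard Davis--Drusvyatskiy analysis of projected subgradient methods for weakly convex functions, instantiated with RMDP-specific constants. Let $\ell$ denote the weak-convexity modulus of $J_{\cU}$ over $\Pi$. We take it to be the smoothness constant of the individual $J_{c,P}$, of order $\gamma\aA/(1-\gamma)^3$ \citep{agarwal2021theory}. Since $J_{\cU}$ is a pointwise maximum of $\ell$-smooth functions, it is $\ell$-weakly convex. Adding $\delta_\Pi$ preserves this, because $\Pi$ is convex. We also need a uniform bound $G$ on the subgradients. By \cref{lemma:policy subgradient} and \cref{eq:J-gradient}, every $g\in\partial J_{\cU}(\pi)$ is a convex combination of vectors with entries $\socc{\pi}_P(s)\qf{\pi}_{c,P}(s,a)/(1-\gamma)$. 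Using $\sum_s \socc{\pi}_P(s)^2\le 1$ and $0\le \qf{\pi}_{c,P}\le 1/(1-\gamma)$, this gives $\norm{g}_2^2\le \aA/(1-\gamma)^4$.

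Set $\nu=1/(2\ell)$ and $\hat\pi_t=\prox_{\nu\barJ_{\cU}}(\pi_t)$. The key one-step inequality comes from the definition of the envelope and nonexpansiveness of $\proj_\Pi$ (noting $\hat\pi_t\in\Pi$):
\[
\ME_\nu\comp\barJ_{\cU}(\pi_{t+1})\le J_{\cU}(\hat\pi_t)+\tfrac{1}{2\nu}\norm{\pi_t-\eta g_t-\hat\pi_t}_2^2 .
\]
We expand the square to get $\norm{\pi_t-\hat\pi_t}^2-2\eta\langle g_t,\pi_t-\hat\pi_t\rangle+\eta^2\norm{g_t}^2$. Weak convexity gives $\langle g_t,\hat\pi_t-\pi_t\rangle\le J_{\cU}(\hat\pi_t)-J_{\cU}(\pi_t)+\tfrac{\ell}{2}\norm{\pi_t-\hat\pi_t}^2$. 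The prox definition, combined with $(\nu^{-1}-\ell)$-strong convexity of the prox objective, bounds $J_{\cU}(\hat\pi_t)-J_{\cU}(\pi_t)+\tfrac{\ell}{2}\norm{\pi_t-\hat\pi_t}^2$ by a negative multiple of $\norm{\pi_t-\hat\pi_t}^2$. With $\nu=1/(2\ell)$ this multiple is at least $-\tfrac{\ell}{2}\norm{\pi_t-\hat\pi_t}^2$ up to the exact constant. Together with \cref{eq:Moreau-grad}, this yields
\[
\ME_\nu\comp\barJ_{\cU}(\pi_{t+1})\le \ME_\nu\comp\barJ_{\cU}(\pi_t)-c\,\eta\,\nu\,\norm{\nabla\ME_\nu\comp\barJ_{\cU}(\pi_t)}_2^2+\tfrac{\eta^2 G^2}{2\nu}
\]
for an absolute constant $c$, which we track carefully.

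Summing over $t\in[T]$ and bounding the minimum by the average gives
\[
\min_t\norm{\nabla\ME}^2\le \frac{\ME(\pi_1)-\inf\ME}{c\eta\nu T}+\frac{\eta G^2}{2c\nu^2}.
\]
To bound the numerator, note that $J_{\cU}\in[0,1/(1-\gamma)]$ on $\Pi$, so the envelope range is at most $1/(1-\gamma)$. Plugging in $\eta=1/\sqrt T$, $\nu=1/(2\ell)$ and $G^2$ produces a bound of the form $T^{-1/2}\bigl(a/(1-\gamma)+b\,\ell\, G^2\bigr)$. This matches the claimed $\tfrac{4}{1-\gamma}+\tfrac{4\gamma\aA^2}{(1-\gamma)^7}$ once $\ell=2\gamma\aA/(1-\gamma)^3$ and $G^2=\aA/(1-\gamma)^4$. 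Taking square roots then gives the $T^{-1/4}$ rate.

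The main obstacle is bookkeeping the constants so they land exactly on the stated expression. This requires pinning down the precise smoothness constant $\ell$ implicit in the subscript $\tfrac{1}{2\ell}$, checking that the first term's coefficient $4/(1-\gamma)$ is free of $\ell$, and handling the $\ell\nu$ factors correctly. If the first term turns out to carry an extra $\ell$, we would instead rescale by bounding the range of $\ME$ more tightly or by using $\nu\ell=1/2$ cancellations. Qualitatively, the argument is the standard one.
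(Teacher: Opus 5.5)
Your proposal is correct and is essentially the paper's argument. The paper simply invokes Davis--Drusvyatskiy's Theorem~3.1 with $\omega=\ell=\frac{2\gamma\aA}{(1-\gamma)^3}$, $L=\frac{\sqrt{\aA}}{(1-\gamma)^2}$ and $f_{\max}=\frac{1}{1-\gamma}$, whereas you unroll that theorem's envelope-descent proof.

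To reach the stated constant, use the exact strong-convexity bound $-(\nu^{-1}-\ell)\|\pi_t-\hat\pi_t\|_2^2=-\ell\|\pi_t-\hat\pi_t\|_2^2$ rather than $-\frac{\ell}{2}$, which would double the $\aA^2$ term. The per-step decrease is then exactly $\eta(1-\ell\nu)\|\nabla(\ME_\nu\comp\barJ_{\cU})(\pi_t)\|_2^2=\frac{\eta}{2}\|\nabla(\ME_\nu\comp\barJ_{\cU})(\pi_t)\|_2^2$, so your $c$ equals $\ell$ in the $c\eta\nu$ normalization rather than being absolute. This gives $T^{-1/2}\bigl(\frac{2}{1-\gamma}+\frac{4\gamma\aA^2}{(1-\gamma)^7}\bigr)$, with no stray $\ell$ in the first term, and that is within the claimed bound.
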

Together with \cref{eq:Moreau-grad-to-subgrad}, \cref{lemma:stationary-convergence} implies that there exists an iterate $\pi_t$ that lies close to a proximal point which is nearly stationary for $\barJ_{\cU}$.

%% file: sections/failure-of-PSD.tex
\section{Failure of Projected Subgradient Descent in Robust MDPs}\label{sec:failure of PSD}

\looseness=-1
Because the total cost function is nonconvex in the policy parameterization \citep{agarwal2021theory}, the stationarity guarantee in \cref{lemma:stationary-convergence} alone does not imply near-optimality.
To bridge this gap, prior works have attempted to establish the \emph{subgradient dominance} property for the robust total cost $J_{\cU}$ \citep{wang2023policy,kitamura2025near}:\footnote{\citet{wang2023policy} adopts the formulation in \cref{eq:subgrad-dom-Moreau}, while \citet{kitamura2025near} uses the stronger form in \cref{eq:strong-subgrad-dom}. Both notions are sufficient to guarantee that PSD efficiently finds an $\varepsilon$-optimal policy.}
\begin{definition}[Subgradient dominance]\label{def:subgrad-dom}
The robust total cost $J_{\cU}$ is said to be \emph{subgradient-dominant} with constant $D>0$ if, for every $\pi \in \Pi$,
\begin{equation}\label{eq:strong-subgrad-dom}
J_{\cU}(\pi) - J_{\cU}(\pi^\star)
\le D G(\pi) \; \text{ where } \;
G(\pi) \df \min_{g \in \partial J_{\cU}(\pi)}\max_{\pi'\in \Pi} \sum_{s, a} \paren*{\pi(a|s) - \pi'(a|s)} g(s, a) \;.
\end{equation}
\end{definition}
In the standard MDP setting where $\cU$ is a singleton, this condition reduces to the classical \emph{gradient dominance} property, which is known to hold.
\begin{assumption}\label{assumption:init-dist}
    The initial distribution $\mu$ has full support, i.e., $\mu(s) > 0$ for all $s \in \cS$.
\end{assumption}
\begin{lemma}[gradient dominance; \citealp{agarwal2021theory}]\label{lemma:gradient dominance}
  When $\cU$ is a singleton and \cref{assumption:init-dist} holds\footnote{Such coverage condition is necessary to ensure the global convergence of policy gradient methods \citep{mei2020global}.}, the total cost function $J_{\cU}$ satisfies \cref{eq:strong-subgrad-dom} with constant $D = ((1-\gamma)\min_s \mu(s))^{-1}$.
\end{lemma}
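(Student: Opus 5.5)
The plan is the classical performance-difference argument of \citet{agarwal2021theory}, specialized to a single model $(c,P)$. When $\cU=\{(c,P)\}$ is a singleton, \cref{lemma:policy subgradient} gives $\partial J_{\cU}(\pi)=\{\nabla J_{c,P}(\pi)\}$ with entries $\frac{1}{1-\gamma}\socc{\pi}_{P}(s)\qf{\pi}_{c,P}(s,a)$. Hence the minimum over $g$ in $G(\pi)$ is trivial, and
\[
G(\pi)=\max_{\pi'\in\Pi}\frac{1}{1-\gamma}\sum_{s}\socc{\pi}_P(s)\sum_a\paren*{\pi(a|s)-\pi'(a|s)}\qf{\pi}_{c,P}(s,a).
\]
Since $\sum_a\pi(a|s)\qf{\pi}_{c,P}(s,a)=\vf{\pi}_{c,P}(s)$, the inner sum equals $\sum_a \pi'(a|s)A^\pi_{c,P}(s,a)$.

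First, I will derive the performance difference lemma from the Bellman equations \cref{eq:Bellman-equation}:
\[
J_{c,P}(\pi)-J_{c,P}(\pi^\star)=\frac{1}{1-\gamma}\sum_s\socc{\pi^\star}_P(s)\sum_a\pi^\star(a|s)A^\pi_{c,P}(s,a).
\]
The derivation telescopes $\vf{\pi}-\vf{\pi^\star}$ along trajectories of $\pi^\star$, using $\vf{\pi^\star}=c^{\pi^\star}+\gamma P^{\pi^\star}\vf{\pi^\star}$ and $\qf{\pi}=c+\gamma P\vf{\pi}$.

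Second, I will bound the right-hand side by its per-state maximum. Define $M(s)=\max_{a}A^\pi_{c,P}(s,a)$. Since $\sum_a\pi(a|s)A^\pi(s,a)=0$, we have $M(s)\ge 0$. Therefore
\[
J(\pi)-J(\pi^\star)\le\frac{1}{1-\gamma}\sum_s\socc{\pi^\star}_P(s)M(s)\le\frac{1}{1-\gamma}\Big\|\frac{\socc{\pi^\star}_P}{\socc{\pi}_P}\Big\|_\infty\sum_s\socc{\pi}_P(s)M(s).
\]
The last sum equals $(1-\gamma)G(\pi)$: the maximizing $\pi'$ in $G(\pi)$ is chosen statewise greedily, placing all mass on an action attaining $M(s)$. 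Nonnegativity of $M$ is exactly what allows replacing the weights $\socc{\pi^\star}_P$ by the ratio times $\socc{\pi}_P$.

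Third, I will bound the distribution-mismatch ratio. The numerator satisfies $\socc{\pi^\star}_P(s)\le 1$. For the denominator, $\socc{\pi}_P(s)\ge(1-\gamma)\mu(s)\ge(1-\gamma)\min_s\mu(s)>0$ by \cref{assumption:init-dist}. Combining the three steps gives
\[
J(\pi)-J(\pi^\star)\le\frac{1}{(1-\gamma)\min_s\mu(s)}\,G(\pi),
\]
which is the stated $D$. Only the $\pi$-occupancy has to be lower-bounded; the $\pi^\star$-occupancy is simply bounded by $1$.

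The main obstacle is bookkeeping rather than depth. The $\frac{1}{1-\gamma}$ factors must be placed correctly between the gradient formula \cref{eq:J-gradient} and the performance difference lemma so that the constant comes out as $((1-\gamma)\min_s\mu(s))^{-1}$ rather than with an extra $(1-\gamma)^{-1}$. The sign convention also needs care, since the paper minimizes costs and defines $A=V-Q$. I will check these against the normalization of $\socc{\pi}_P$, which sums to one.
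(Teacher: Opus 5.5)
Your proposal is correct and follows essentially the paper's route. The paper cites \citet{agarwal2021theory} for this lemma, and its own argument in the proof of \cref{theorem:PSD-succeeds} (with a single worst-case model, $k=1$) is your chain: the performance difference lemma, bounding $\sum_a \pi^\star(a|s)A^\pi_{c,P}(s,a)$ by the nonnegative $\max_a A^\pi_{c,P}(s,a)$, the mismatch bound $\socc{\pi^\star}_P(s)/\socc{\pi}_P(s)\le 1/((1-\gamma)\mu(s))$, and the greedy $\pi'$ in $G(\pi)$; your handling of the $(1-\gamma)^{-1}$ factors and of the sign convention $A=V-Q$ yields exactly $D=((1-\gamma)\min_s\mu(s))^{-1}$.
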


\looseness=-1
Intuitively, the quantity $G(\pi)$ measures the first-order stationarity of $\pi$ with respect to the robust objective.
Using the Lipschitz continuity of $J_{\cU}$, one can relate $G(\pi)$ to the gradient norm of the Moreau envelope, yielding the following inequality (see \cref{sec:subgrad-dom-Moreau-derivation} for details):
\begin{equation}\label{eq:subgrad-dom-Moreau}
J_{\cU}(\pi) - \min_{\pi'\in\Pi} J_{\cU}(\pi') \le D' \norm*{\nabla (\ME_{\frac{1}{2\ell}} \comp \barJ_{\cU})(\pi)}_2\; \text{ where }\; D' \df 2D\sqrt{\aS} + \frac{2\ell \sqrt{\aA}}{(1-\gamma)^2}\;.
\end{equation}
Consequently, if the robust total cost $J_{\cU}$ is subgradient-dominant, then combining \cref{lemma:stationary-convergence} with \cref{eq:subgrad-dom-Moreau} guarantees that PSD converges to an $\varepsilon$-optimal policy in $O(\varepsilon^{-4})$ policy updates.

\subsection{A Counterexample Where PSD Fails}\label{sec:counterexample-PSD-fails}

\looseness=-1
Consider RMDPs with uncertainty only in the transition kernel, i.e., $\cU = \cP$.
Several prior works claim that, under the full-support assumption on the initial distribution (\cref{assumption:init-dist}), the robust total cost $J_{\cP}$ is subgradient-dominant for any compact uncertainty set $\cP$ (e.g., Theorem 3.2 in \citealp{wang2023policy} and Theorem 4 in \citealp{kitamura2025near}) .
We show that these claims are incorrect.
\textbf{In fact, the robust total cost need not be subgradient-dominant even for a simple transition set.}
Specifically, when $\abs{\cP} = 2$, the function $J_{\cP}$ can already admit a suboptimal strict local minimum.
The technical errors in the prior analyses are explained in \cref{sec:proof-error-in-prior-work}.

\begin{figure}
    \centering
    \begin{subfigure}[\small A deterministic RMDP with $\cP=\brace{P_1, P_2}$. Transitioning from state $s_+$ incurs a cost of $+1$. 
    ]
    {
      \includegraphics[width=0.6\linewidth]{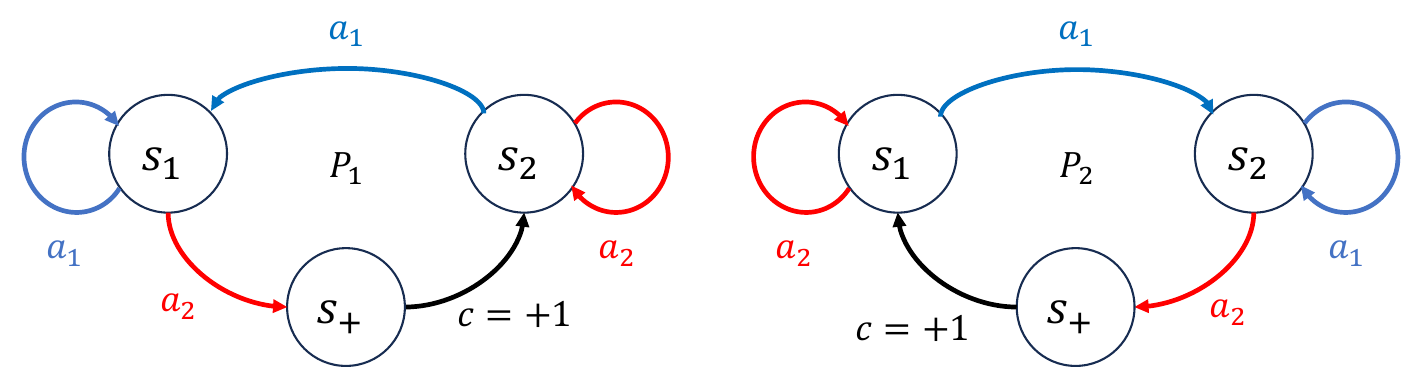}
    }
    \end{subfigure}
    \hfill
    \begin{subfigure}[\small Robust total cost landscape with $\gamma = 0.9$.]{
      \includegraphics[width=0.33\linewidth]{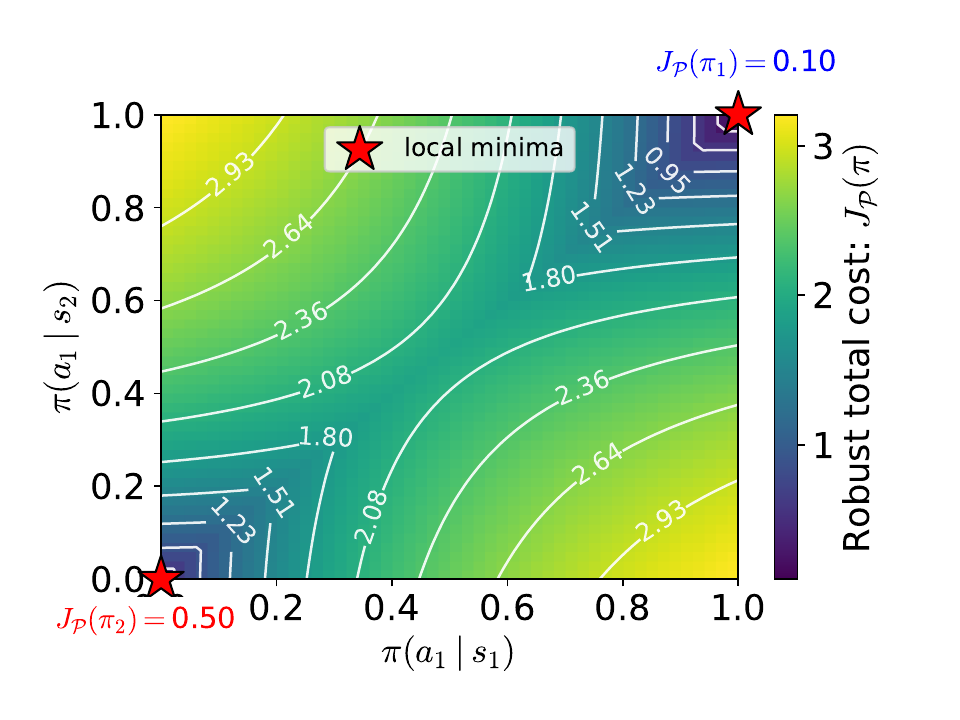}
    }
    \end{subfigure}
    \caption{\small 
    An RMDP example where PSD fails (\textbf{left}) and the corresponding robust total cost landscape (\textbf{right}). The policy ${\color{red}\pi_2}$, which always selects action ${\color{red}a_2}$, is a strictly suboptimal local minimum. Details are provided in \cref{example:RMDP-failure}.}
  \label{fig:nonrect}
\end{figure}

\begin{example}[An RMDP where PSD fails]\label{example:RMDP-failure}
  \looseness=-1
  Consider a deterministic RMDP with three states $(s_1, s_2, s_+)$, two actions $({\color{blue}a_1}, {\color{red}a_2})$, and two transition kernels $\cP = \{P_1, P_2\}$, as illustrated in \cref{fig:nonrect} (\textbf{left}). 
  We set the discount factor to $\gamma = 0.9$ and the initial distribution to $\mu(s_1) = \mu(s_2)=0.45$, which satisfies \cref{assumption:init-dist}. 
  Let ${\color{blue}\tilde{\pi}_1}$ and ${\color{red}\tilde{\pi}_2}$ denote the policies that always choose ${\color{blue}a_1}$ and ${\color{red}a_2}$, respectively. The policy ${\color{red}\tilde{\pi}_2}$ deliberately visits the costly state $s_+$ and therefore incurs a larger robust total cost than ${\color{blue}\tilde{\pi}_1}$. 
  For this instance, the following proposition holds. The proof is given in \cref{sec:proof-of-psd-trap}.  
\begin{proposition}\label{proposition:psd-trapped-near-pi2}
  The following two claims hold in the example instance:
    \begin{itemize}
      \item ${\color{red}\tilde{\pi}_2}$ is a suboptimal strict local minimum of $J_{\cP}$ satisfying $0.505=J_{\cP}({\color{red}\tilde{\pi}_2}) > J_{\cP}({\color{blue}\tilde{\pi}_1})=0.1$.
      \item For all sufficiently small $\eta>0$, when initialized at ${\color{red}\tilde{\pi}_2}$, the PSD update 
      $\pi_{t+1} = \proj_{\Pi}\paren*{\pi_t - \eta g_t}$ with any $g_t \in \partial J_{\cP}(\pi_t)$ satisfy $\|\pi_t - {\color{red}\tilde{\pi}_2}\|_\infty \leq 3\eta$ for all $t\ge 0$.
    \end{itemize}
\end{proposition}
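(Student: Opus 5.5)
The plan is to turn the example into an explicit two-parameter problem and then show that $\tilde{\pi}_2$ is a \emph{sharp} local minimum of the max of two smooth functions. Sharpness gives both claims at once.

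\textbf{Reduction and the two cost functions.} Since transitions are deterministic and the action taken at $s_+$ plays no role, a policy is described by $p_1 = \pi(a_2\mid s_1)$ and $p_2 = \pi(a_2\mid s_2)$, with $\tilde{\pi}_2$ corresponding to $p_1=p_2=1$. For each $i\in\{1,2\}$ I will solve the small Bellman system \cref{eq:Bellman-equation} under $P_i$. This gives $J_{P_i}(\pi)=\sum_s\mu(s)\vf{\pi}_{P_i}(s)$ as an explicit rational function of $(p_1,p_2)$. Evaluating it yields $J_{P_i}(\tilde{\pi}_1)$ and $J_{P_i}(\tilde{\pi}_2)$, and hence the values $0.505$ and $0.1$ in the first claim. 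I expect both kernels to be active at $\tilde{\pi}_2$, that is $J_{P_1}(\tilde{\pi}_2)=J_{P_2}(\tilde{\pi}_2)$; this is what makes the point nonsmooth.

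\textbf{Strict local minimality via a sharpness inequality.} Using \cref{lemma:policy subgradient}, I compute $g_i=\nabla J_{P_i}(\tilde{\pi}_2)$ from \cref{eq:J-gradient}. Feasible directions at $\tilde{\pi}_2$ decrease $p_1$ and/or $p_2$, moving mass from $a_2$ to $a_1$. The intended structure is that every such direction strictly increases $J_{P_1}$ or $J_{P_2}$: moving toward $a_1$ at one state helps under one kernel but hurts under the other. Concretely, I will show that there is a $c>0$ with
\[
\max_{i\in\{1,2\}}\langle g_i,\,\pi-\tilde{\pi}_2\rangle \;\ge\; c\,\|\pi-\tilde{\pi}_2\|\quad\text{for all }\pi\in\Pi .
\]
This is a finite check, because $\pi-\tilde{\pi}_2$ ranges over a cone generated by two extreme directions. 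Since $J_{P_i}$ is smooth on $\Pi$ (say $\ell$-smooth), Taylor's theorem then gives
\[
J_{\cP}(\pi)-J_{\cP}(\tilde{\pi}_2)\ge c\|\pi-\tilde{\pi}_2\|-\tfrac{\ell}{2}\|\pi-\tilde{\pi}_2\|^2>0
\]
in a neighborhood, which is strict local minimality. Together with the computed values, this proves the first claim.

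\textbf{Trapping of PSD.} Take any $\pi$ near $\tilde{\pi}_2$ and any $g\in\partial J_{\cP}(\pi)$. Then $g$ is a convex combination of the gradients of the models active at $\pi$. I want
\[
\langle g,\,\pi-\tilde{\pi}_2\rangle\ge c'\|\pi-\tilde{\pi}_2\|.
\]
If only $P_i$ is active, then $J_{P_i}(\pi)\ge J_{P_j}(\pi)$, so $\pi$ lies in the region where the $P_i$ direction is the ``pushing back'' one, and the inequality follows by continuity of $\nabla J_{P_i}$. If both are active, I use the standard weakly-convex sharp-minimum argument: $\langle g,\pi-\tilde{\pi}_2\rangle\ge J_{\cP}(\pi)-J_{\cP}(\tilde{\pi}_2)-\tfrac{\ell}{2}\|\pi-\tilde{\pi}_2\|^2$. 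Nonexpansiveness of $\proj_\Pi$ then gives
\[
\|\pi_{t+1}-\tilde{\pi}_2\|^2\le\|\pi_t-\tilde{\pi}_2\|^2-2\eta c'\|\pi_t-\tilde{\pi}_2\|+\eta^2\|g_t\|^2 .
\]
So the distance shrinks whenever $\|\pi_t-\tilde{\pi}_2\|\gtrsim\eta\|g_t\|^2/(2c')$. A single step increases the distance by at most $\eta\|g_t\|$. An induction therefore keeps the iterates in an $O(\eta)$ ball, and bounding $\|g_t\|$ and converting to $\|\cdot\|_\infty$ should give the constant $3\eta$.

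\textbf{Main obstacle.} The hardest step is making the uniform lower bound $c'$ on $\langle g,\pi-\tilde{\pi}_2\rangle$ hold near $\tilde{\pi}_2$, and not only at it. This needs explicit control of which kernel is active in each region. It also needs explicit gradient magnitudes to pin down the constant $3$. I would first try to get this from the closed-form $J_{P_i}$ directly, rather than from abstract continuity.
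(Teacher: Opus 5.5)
\textbf{Trapping claim.} Your route here differs from the paper's, and it is sound as far as it goes. For every $g\in\partial J_{\mathcal P}(\pi)$,
\[
\langle g,\pi-\tilde\pi_2\rangle\ge J_{\mathcal P}(\pi)-J_{\mathcal P}(\tilde\pi_2)-\frac{\ell}{2}\|\pi-\tilde\pi_2\|^2 ,
\]
so you do not need the case split on the active set. Combined with sharpness and nonexpansiveness, this traps PSD in an $O(\eta)$ ball around any sharp local minimum. The gap is your final step, the claim that bounding $\|g_t\|$ ``should give the constant $3\eta$''. It does not.

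Near $\tilde\pi_2$ your recursion only certifies $\|\pi_{t+1}-\tilde\pi_2\|\le\eta\|g_t\|$. That is the length of an \emph{unclipped} step, so any invariant ball you derive has radius at least $\eta\|g_t\|$. In this instance an unclipped step changes $x=\pi(a_1\mid s_1)$ by $\frac{\eta}{2}\bigl(g_t(s_1,a_1)-g_t(s_1,a_2)\bigr)$. For $g_t\approx\nabla J_{P_2}(\tilde\pi_2)$ this equals $\frac{\eta}{2}f_2=\frac{\eta}{2}d_2\gamma^2\approx 3.66\eta$, where $d_2\approx 9.045$ is $(1-\gamma)^{-1}$ times the occupancy of $s_2$ under $\tilde\pi_2$ and $P_1$. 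This is already after discarding the per-state constant part of $g_t$, which the projection ignores. With the raw $\|g_t\|_2\ge 7.3$ you get only about $5.2\eta$ in $\|\cdot\|_\infty$.

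What nonexpansiveness cannot see is the clipping at the vertex $\tilde\pi_2$. Per state, the projection is $x\mapsto\mathrm{clip}\bigl(x-\frac{\eta}{2}F_1(g)\bigr)$ with $F_i(g)=g(s_i,a_1)-g(s_i,a_2)$. So a coordinate can grow only when its $F_i$ is negative, and near $\tilde\pi_2$ the negative values are close to $f_1=-d_1\gamma(1-\gamma)\approx-0.04$. The paper uses exactly this, with $y=\pi(a_1\mid s_2)$. On the edge $y=0<x$ only $P_2$ is active, so $F_1(g_t)\approx f_2>0$ resets $x$ to $0$. Meanwhile $y$ rises to at most $\frac{\eta}{2}(-f_1+\varepsilon)=\frac{\eta}{4}(f_2-f_1)\le 3\eta$. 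The other edge is symmetric, and the iterates simply alternate between the two edges. To get the stated constant you need this coordinate-wise edge analysis, or an equivalent use of the clip.

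\textbf{Strict local minimality.} Your plan for the first claim matches the paper's: explicit values, both kernels active at $\tilde\pi_2$, and first-order sharpness. However, the sharpness check is not finite ``because the cone has two generators''. The map $v\mapsto\max_i\langle g_i,v\rangle$ is convex, so positivity at the generators does not carry over to the interior of the cone. Here both generators give the value $f_2>0$ trivially. The binding direction is the diagonal $x=y$, where the two kernels tie to first order. There the test reduces to $f_1+f_2>0$, which is where $d_1<d_2$ and $\gamma>1/2$ are used. Bounding the max below by the average, as the paper does, gives $c=(f_1+f_2)/2$ directly.
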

  This result formally shows that $J_{\cP}$ is not subgradient-dominant. Indeed, if it held, then \cref{eq:subgrad-dom-Moreau} would imply that every local minimum is global, and \cref{lemma:stationary-convergence} would guarantee that PSD with sufficiently small $\eta$ converges to a global minimum. However, the first claim shows that ${\color{red}\tilde{\pi}_2}$ is a strict local minimum that is not global, while the second claim shows that PSD can remain trapped near this suboptimal point. Both conclusions contradict subgradient dominance. These facts are also illustrated empirically in \cref{fig:nonrect} (\textbf{right}), which shows the landscape of the robust total cost function.
\end{example}

\subsection{\texorpdfstring{NP-hardness of $\varepsilon$-Optimal Policy Identification}{NP-hardness of ε-Optimal Policy Identification}}\label{sec:NP-hardness-finite-uncertainty}

\looseness=-1
Since PSD fails to find an $\varepsilon$-optimal policy under general uncertainty sets, a natural question is whether any algorithm can efficiently identify an $\varepsilon$-optimal policy in such settings. Unfortunately, the answer is negative. 
Even when the transition set is finite and policy evaluation can be performed efficiently, identifying an $\varepsilon$-optimal policy is NP-hard.
\begin{proposition}[Hardness for finite $\cP$ and singleton $\cC$]\label{proposition:NP-hardness}
  For any $\varepsilon \leq 0.5 \gamma^3 (1-\gamma)^{-1}$, finding an $\varepsilon$-optimal policy in RMDPs with a finite transition set and a singleton cost set is NP-hard.
\end{proposition}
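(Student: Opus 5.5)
The plan is a polynomial-time reduction from 3-SAT; the threshold $0.5\gamma^3(1-\gamma)^{-1}$ suggests that the relevant costs are collected from depth $3$ onward. Given a formula with variables $x_1,\dots,x_n$ and clauses $C_1,\dots,C_m$, I would build a deterministic RMDP with a singleton cost $c$ and one transition kernel $P_j$ per clause, so $\abs{\cP}=m$. Each variable gets a decision state $s_{x_i}$ with two actions, \emph{true} and \emph{false}. Every kernel shares the initial layer: from a start state (with $\mu$ concentrated there, or spread with small mass on auxiliary states if \cref{assumption:init-dist} is wanted), kernel $P_j$ routes deterministically to a clause state for $C_j$. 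From that clause state it branches to the decision states of the variables in $C_j$; since clause-state actions are irrelevant, I would make the clause state pick one of its three literals through a cycle. A simpler route is three consecutive check states. After the decision at $s_{x_i}$, the kernel $P_j$ moves to an absorbing good state with cost $0$ if the literal of $x_i$ in $C_j$ is satisfied by the chosen action, and to an absorbing bad state with cost $1$ per step otherwise. Timing the layers so that absorption happens at depth $3$ makes a failed clause contribute at least $\gamma^3/(1-\gamma)$ to the value.

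To make ``the clause is satisfied if some literal is true'' come out of one trajectory, I would handle each clause $C_j=(\ell_1\vee\ell_2\vee\ell_3)$ by letting the three literals be checked sequentially, each at its own variable copy, and reaching the bad state only if all three fail. The obstacle is that a single decision state $s_{x_i}$ has to serve all clauses consistently. This is fine because the policy is stationary and each state appears only once, but a clause may reach $s_{x_i}$ after different prefixes. To keep it clean, I would instead use one kernel per pair $(j,k)$ with $k\in\{1,2,3\}$ selecting literal $\ell_k$, together with a cost structure in which the adversary picks the max. The max is the wrong aggregation for an OR, though, so the sequential design is better: under $P_j$, the start state goes to $s_{\mathrm{var}(\ell_1)}$; a satisfying action leads to good, and a falsifying action leads to $s_{\mathrm{var}(\ell_2)}$ and so on. Because the transitions at $s_{x_i}$ depend on $j$, and each $P_j$ is a full kernel, this is allowed. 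Depths are equalized with dummy chains so that ``bad'' is entered at a fixed time, adjusted to give the $\gamma^3$ factor, or by scaling the per-step costs.

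Next I would prove the value gap. If the formula is satisfiable, the deterministic assignment policy gives every $P_j$ cost $0$, so $J_{\cP}(\pi^\star)=0$. If it is unsatisfiable, I must show that every stochastic policy has $J_{\cP}(\pi)>\varepsilon$, and this is the main obstacle, since randomization could smear the failure probability across clauses. Let $p_i=\pi(\text{true}\mid s_{x_i})$. The failure probability under $P_j$ is then a product $\prod_k q_k$ of the falsifying probabilities of its literals, so the value is $\gamma^3(1-\gamma)^{-1}\max_j\prod_k q_k$. Rounding independently with probabilities $p_i$, the expected number of violated clauses is $\sum_j\prod_k q_k$. Unsatisfiability forces this to be at least $1$, so $\max_j\prod_k q_k\ge 1/m$. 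That bound is too weak for the constant $0.5$. I would therefore try to strengthen it to $1/2$ by adding, for each variable, extra kernels that penalize randomization, for example kernels $P_{i}^{T},P_{i}^{F}$ that check $x_i$ is true or false respectively at cost scaled so that the adversary gains $\max(p_i,1-p_i)$. Alternatively I would reduce from a gap-preserving variant, and if needed I would accept the paper's constant only after calibrating these gadgets. I expect the lower bound $\max_j \Pr(\text{fail}_j)\ge 1/2$ for an arbitrary randomized policy to be the delicate step.

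Finally, I would note that the construction is polynomial in $n$ and $m$, that $\abs{\cP}$ is polynomial, and that an $\varepsilon$-optimal policy with $\varepsilon\le 0.5\gamma^3(1-\gamma)^{-1}$ decides satisfiability. An $\varepsilon$-optimal policy has cost at most $\varepsilon$ when the formula is satisfiable, while every policy has cost at least $0.5\gamma^3/(1-\gamma)$ otherwise. Comparing $J_{\cP}(\pi)$ against the threshold, which is computable because evaluation over a finite $\cP$ is efficient, then yields the answer.
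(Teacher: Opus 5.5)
There is a genuine gap, and it sits exactly at the step you flag as delicate. For your sequential-check design, the unsatisfiable direction does not go through, and it cannot be rescued at the constant $0.5$.

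Under your $P_j$ the cost state is reached with probability $\prod_k q_k$. So the policy with $\pi(\text{true}\mid s_{x_i})=1/2$ for all $i$ makes every clause (on three distinct variables, as your design requires) fail with probability $1/8$. Three checks after the start state mean the cost state is entered no earlier than time $4$. Hence this policy has $J_{\cP}\le \gamma^4/(8(1-\gamma)) < 0.5\gamma^3(1-\gamma)^{-1}$ whether or not the formula is satisfiable. An $\varepsilon$-optimization oracle may therefore always return it and reveal nothing. Reducing from a gap version of MAX-3SAT does not help, because this uniform policy caps the gap of the design at $1/8$ regardless of the source instance.

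Your proposed repair goes the wrong way. Kernels whose adversarial value is $\max(p_i,1-p_i)$ charge every policy at least $1/2$ and charge the deterministic satisfying assignment the full amount, which destroys the value $0$ in the satisfiable case. More generally, a gadget kernel that visits $s_{x_i}$ once has value affine in $p_i$. A maximum of affine functions is convex, so it cannot vanish at $p_i\in\{0,1\}$ while being positive in between; randomization cannot be penalized this way. (Also, costs must lie in $[0,1]$, so rescaling them is not available.)

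The missing idea is the one you discarded when you called clause-state actions irrelevant: let the minimizing agent implement the disjunction, and let the adversary implement only the conjunction. The construction is as follows.
\begin{itemize}
\item Under $P_j$, send $s_{\mathrm{ini}}$ to a clause state $s_{C_j}$. There the agent picks one of three actions leading to $s_{x_{j_1}}, s_{x_{j_2}}, s_{x_{j_3}}$.
\item At $s_{x_i}$, $P_j$ sends the action satisfying the literal of $x_i$ in $C_j$ to a zero-cost absorbing state. The other action goes to an absorbing state of cost $1$, entered at time $3$.
\end{itemize}
Because $s_{C_j}$ is reached only under $P_j$, the agent's choice there acts as a per-clause minimum. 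The failure probability under $P_j$ is then at least $\min_k q_k$ rather than $\prod_k q_k$.

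Now round by majority: set $x_i$ true iff $\pi(\text{true}\mid s_{x_i})>1/2$. If the formula is unsatisfiable, some clause $C_j$ is falsified by this rounding. That means each of its three literals is falsified by $\pi$ with probability at least $1/2$. Whatever the agent does at $s_{C_j}$, it reaches the cost state with probability at least $1/2$ at time $3$. With $\mu$ concentrated on $s_{\mathrm{ini}}$, this gives $J_{\cP}(\pi)\ge V^{\pi}_{P_j}(s_{\mathrm{ini}})\ge 0.5\gamma^3(1-\gamma)^{-1}$. This majority-rounding step, made possible by the $\min$ structure, produces the factor $1/2$ that your product structure cannot.
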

\looseness=-1
The proof is based on a reduction from the \textsc{3-SAT} problem (see \cref{sec:proof-of-NP-hard}). 
We discuss the differences from prior NP-hardness results for RMDPs in \cref{sec:discussion}.
In this setting, since the uncertainty set is finite, both the robust total cost and its subgradients can be approximated to $\delta$ accuracy in time polynomial in $\aS$, $\aA$, $(1-\gamma)^{-1}$, $\log \delta^{-1}$, and $\abs{\cU}$ \citep{kumar2024policy}.

\looseness=-1
As a byproduct, we obtain an additional hardness result for RMDPs with a rectangular transition set and a finite cost set.
This setting is especially relevant for \emph{robust constrained MDPs} (RCMDPs), in which a policy must satisfy multiple robust constraints \citep{kitamura2025near,russel2020robust}:
\begin{align*}
   \text{(RCMDP)} \quad \min_{\pi \in \Pi} J_{c_0, \cP}(\pi) \quad \text{such that} \quad \max_{n \in [N]} J_{c_n, \cP}(\pi) \leq b\;. 
\end{align*}
Here, $c_0, \ldots, c_N$ are the objective and constraint cost functions, and $b > 0$ is a constraint threshold. 
When $\cP$ is rectangular, the constraint term $\max_{n \in [N]} J_{c_n,\cP}(\pi)$ corresponds to an RMDP with a rectangular transition set and a non-rectangular (finite) cost set $\cC=\{c_1,\ldots,c_N\}$.

\looseness=-1
Recent work by \citet{kitamura2025near} proposes a PSD-based algorithm for RMDPs with a general transition set and a finite cost set, suggesting an efficient approach for solving RCMDPs.
However, the following \cref{proposition:NP-hard-sa-rect} shows that their results are incorrect.
In particular, even when the transition set is $sa$-rectangular and finite, identifying an $\varepsilon$-optimal policy remains NP-hard, which in turn implies NP-hardness of solving RCMDPs.

\begin{proposition}[Hardness for $sa$-rectangular $\cP$ and finite $\cC$]\label{proposition:NP-hard-sa-rect}
\looseness=-1
For any $\varepsilon \le 0.5\gamma^2(1-\gamma)^{-1}$, finding an $\varepsilon$-optimal policy in RMDPs with an $sa$-rectangular finite transition set and a finite cost set is NP-hard. Moreover, determining the feasibility of RCMDPs with $sa$-rectangular $\cP$ is NP-hard.
\end{proposition}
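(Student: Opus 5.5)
We reduce from \textsc{3-SAT}, reusing the gadget behind \cref{proposition:NP-hardness} but relocating the adversary's combinatorial choice from the transition set to the cost set. Take a formula with variables $x_1,\dots,x_n$ and clauses $C_1,\dots,C_m$. The instance has a start state $s_0$, one decision state $s_i$ per variable with two actions (``true'' and ``false''), and two absorbing sinks $s_{\mathrm{good}}, s_{\mathrm{bad}}$.

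The transition set is $sa$-rectangular and finite, and its only job is to let the adversary choose which variable to inspect. At $s_0$ (a single action) we set $\cP_{s_0,a} = \{\delta_{s_1},\dots,\delta_{s_n}\}$, so the adversary selects the variable state $s_i$. At every other state--action pair the transition is a fixed deterministic one. From $s_i$, action ``true'' leads to a tagged state $t_i^{+}$, action ``false'' leads to $t_i^{-}$, and both tagged states then move to an absorbing state.

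The cost set is $\cC = \{c_1,\dots,c_m\}$, with one cost function per clause. Cost $c_j$ charges $1$ per step, from the tagged/absorbing part onward, only on those $t_i^{\pm}$ whose literal does \emph{not} satisfy $C_j$, and only when $x_i$ occurs in $C_j$.

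This design is not yet right, because the adversary picks a single variable $i$ and a single clause $j$. Then $\max_{j}\max_i$ of the cost is $\ge$ something positive whenever some variable takes the value that fails some clause containing it, and that happens almost always. I will therefore restructure so the objective becomes $\max_j \min_{\text{literals } \ell\in C_j}$ (cost if $\ell$ is false). Clause $C_j$ is violated iff all three of its literals are false, so we need a minimum over literals, and the minimum cannot come from the adversary. I will obtain it by having the adversary's rectangular choice \emph{hurt} the adversary.

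The mechanism is to put the three literals of a clause in sequence. From $s_0$ the path deterministically visits the variable states, and at each variable state the policy's choice either ``exits'' to $s_{\mathrm{good}}$ (zero cost) or continues. Cost $c_j$ is nonzero only at the step after the three literal-states of $C_j$ have all been passed without exit. Concretely, I take $n$ sequential states and let $c_j$ put cost $1$ at the absorbing sink reached only if, at each variable in $C_j$, the chosen action made the literal false. This is achieved by letting $c_j$ pay cost at the ``continue'' branches: a literal that is true for $C_j$ routes to a zero-cost absorbing state under $c_j$.

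A deterministic chain cannot route differently for each $j$, since the transitions are shared across cost functions. So I replace the chain by costs: $c_j(t_i^{\pm})$ is a large negative offset, shifted to stay in $[0,1]$. Rather than pin down those details here, the plan is to engineer, via $sa$-rectangular branching at $s_0$ among variable copies, an objective of the form
\[
J_\cU(\pi)=\max_j \frac{\gamma^2}{1-\gamma}\prod_{i\in C_j}\Pr_\pi(\ell_{ij}\text{ false}),
\]
which is $\frac{\gamma^2}{1-\gamma}$ times a probability. This is why the threshold is $0.5\gamma^2(1-\gamma)^{-1}$ instead of $\gamma^3$.

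Next come the deterministic-rounding and gap argument. If the formula is satisfiable, the corresponding deterministic policy has $J_\cU = 0$. If it is unsatisfiable, we must show every stochastic policy has $J_\cU \ge \gamma^2/(1-\gamma)$ times some constant exceeding $1/2$; otherwise an $\varepsilon$-optimal policy ($\varepsilon\le 0.5\gamma^2(1-\gamma)^{-1}$) would have value strictly less, and rounding it would yield a satisfying assignment.

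The main obstacle is exactly this gap for stochastic policies. Products of probabilities can be small everywhere, for instance with $p=1/2$ giving $1/8$ per clause. So the product form must be replaced by a structure in which the adversary's maximization is over an $sa$-rectangular choice as well, yielding a quantity like $\max_j\min_{\ell\in C_j}$ of deterministic-style indicators. My first attempt is to make the cost linear, $c_j$ charging $\sum_{\ell\in C_j}\Pr(\ell\text{ true})$ negatively, and to check via a randomized-rounding (threshold at $1/2$) argument that a value below $1/2$ forces each clause to have a literal with probability at least $1/2$ of being true. Rounding each variable to its more likely value then satisfies the formula.

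Given that reduction, the RCMDP claim follows directly. Set $c_0\equiv 0$, use the same $\cP$ and $\{c_j\}$ as constraints, and set $b = 0.5\gamma^2(1-\gamma)^{-1}$. The resulting RCMDP is feasible iff the formula is satisfiable.
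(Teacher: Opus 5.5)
\textbf{There is a genuine gap.} You never arrive at a concrete construction, and the two objective shapes you do commit to cannot give the constant gap the statement needs.

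The product form $\max_j\prod_{i}\Pr_\pi(\ell_{ij}\text{ false})$ is what your exit-chain mechanism produces. It only separates $0$ from roughly $1/M$ on unsatisfiable formulas; as you note, the uniform policy already gives $1/8$ per clause.

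The linear form fails outright. If each model's value is affine in the action probabilities at the variable states, as in your linear attempt, then $J_{\mathcal{U}}$ is a maximum of finitely many affine functions of those probabilities. It is therefore convex and can be minimized in polynomial time, so it cannot encode 3-SAT. Concretely, your rounding step needs $\sum_{\ell\in C_j}\Pr(\ell\text{ true})>3/2$ for every clause. A satisfiable instance only guarantees a clause sum of $1$, and the uniform point reaches $3/2$ on every instance, so completeness and soundness cannot both hold. In addition, if ``variable copies'' means per-clause copies, you lose consistency of the assignment across clauses; the variable states must be shared.

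The missing idea is to let the \emph{agent}, not the adversary and not a product or sum, take the minimum over the literals of a clause. The paper's construction works as follows.
\begin{itemize}
\item It adds clause states $s_{C_1},\dots,s_{C_M}$.
\item The $sa$-rectangular uncertainty sits only at the initial state, where kernel $P_m$ routes to $s_{C_m}$. This is the adversary's max over clauses.
\item At $s_{C_m}$ the agent chooses one of three actions, each leading to the shared, absorbing variable state of the corresponding literal. This is the min over literals. A greedy choice there is without loss, since $\pi(\cdot\mid s_{C_m})$ only affects the $m$-th term.
\item Because $x_n$ can appear with different polarities in different clauses, the clause-dependent penalty is carried by the cost set. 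Cost $c_m$ charges $1$ per step for the action at $s_{x_{m_i}}$ that falsifies the $i$-th literal of $C_m$.
\item Cost $c_m$ also charges $-(1-\gamma)^{-1}$ at every other clause state. Routing elsewhere then yields at most $-\gamma<0$, which forces the worst-case kernel for $c_m$ to be $P_m$.
\end{itemize}
This gives $\min_\pi J_{\mathcal{U}}(\pi)=\frac{\gamma^2}{1-\gamma}\min_\pi\max_m\min_{i\in[3]}\Pr_\pi(\ell_{m_i}\text{ false})$. That quantity is $0$ when the formula is satisfiable. Otherwise, rounding each variable at $1/2$ shows it is at least $\frac{\gamma^2}{2(1-\gamma)}$.

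The uniform policy attains exactly this bound on every instance. Hence your RCMDP threshold $b=0.5\gamma^2(1-\gamma)^{-1}$ does not separate satisfiable from unsatisfiable formulas, and neither does $\varepsilon$ equal to that value. You need $b$ strictly smaller, e.g.\ $b=\frac{\gamma^2}{4(1-\gamma)}$. With that change, your step from the first claim to RCMDP feasibility ($c_0\equiv 0$, constraints $c_1,\dots,c_M$) goes through.
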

The proof constructs an RMDP with an $sa$-rectangular transition set and a finite cost set that simulates the RMDP instance in \cref{proposition:NP-hardness}.
Full details are provided in \cref{sec:proof-of-NP-hard}.

%% file: sections/rectangular.tex
\section{Sufficient Conditions When Projected Subgradient Descent Succeeds}\label{sec:PSD-succeeds}

\looseness=-1
Although PSD can fail for general RMDPs, it succeeds under additional conditions.
In this section, we identify two sufficient conditions under which the subgradient dominance property \eqref{eq:strong-subgrad-dom} holds.

\begin{assumption}\label{assm:unique-worst-case-occupancy}
    Let $\cP^\pi \df \brace{P \given (c, P) \in \cU^\pi}$ be the set of worst-case transition kernels under policy $\pi$.
    $\cP^\pi$ is a singleton for any $\pi \in \Pi$.
\end{assumption}
\begin{assumption}\label{assm:unique-robust-value}
    Let $\cQ^\pi \df \brace{Q^\pi_{c, P} \given (c, P) \in \cU^\pi}$ be the set of worst-case action-value functions under policy $\pi$. $\cQ^\pi$ is a singleton for any $\pi \in \Pi$.
\end{assumption}

\looseness=-1
Our assumptions are motivated by the failure mode of PSD in \cref{example:RMDP-failure}. Intuitively, suboptimal local minima can arise when different worst-case models prescribe conflicting local directions toward the global optimum. Our assumptions rule out such conflict. \Cref{assm:unique-robust-value} requires all active worst-case models to induce the same action-value function, so by \cref{eq:J-gradient} they agree on the local improvement direction. \Cref{assm:unique-worst-case-occupancy} requires agreement only at the transition level, since conflict in the cost is known not to create this pathology \citep{barakat2025global,fatkhullin2025stochastic}. The following theorem shows that either condition is sufficient for subgradient dominance.

\begin{theorem} [Sufficient conditions for subgradient dominance]\label{theorem:PSD-succeeds}
    \looseness=-1
    Under the full-support initial distribution (\cref{assumption:init-dist}), the robust total cost $J_\cU$ is subgradient-dominant with constant $D = ((1-\gamma)\min_s \mu(s))^{-1}$ if either of \cref{assm:unique-worst-case-occupancy} or \ref{assm:unique-robust-value} holds.
\end{theorem}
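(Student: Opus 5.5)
Fix $\pi \in \Pi$, let $g \in \partial J_{\cU}(\pi)$ attain the minimum defining $G(\pi)$, and let $\pi^\star$ be optimal. By \cref{lemma:policy subgradient} we can write $g = \sum_i \lambda_i \nabla J_{c_i,P_i}(\pi)$, a convex combination of gradients of worst-case models $(c_i,P_i) \in \cU^\pi$. All of these models share the common value $J_{c_i,P_i}(\pi) = J_{\cU}(\pi)$.

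The plan is to pick a single model $(c^\star,P^\star) \in \cU^{\pi^\star}$, or any model, and bound
\[
J_\cU(\pi) - J_\cU(\pi^\star) \le \sum_i \lambda_i J_{c_i,P_i}(\pi) - \sum_i \lambda_i J_{c_i,P_i}(\pi^\star),
\]
which holds because $J_\cU(\pi^\star) \ge J_{c_i,P_i}(\pi^\star)$ for every $i$. It then suffices to bound the right-hand side by $D \max_{\pi'} \langle \pi - \pi', g\rangle$. For each $i$, the performance difference lemma gives
\[
J_{c_i,P_i}(\pi) - J_{c_i,P_i}(\pi^\star) = \tfrac{1}{1-\gamma}\sum_s d^{\pi^\star}_{P_i}(s)\sum_a (\pi(a|s) - \pi^\star(a|s))\, Q^\pi_{c_i,P_i}(s,a).
\]
The remaining difficulty is to exchange $d^{\pi^\star}_{P_i}$ for $d^\pi_{P_i}$ uniformly in $i$ and then pull the convex combination inside.

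Under \cref{assm:unique-worst-case-occupancy}, all $P_i$ equal a single $P$. Then the convex combination of costs $\bar c = \sum_i \lambda_i c_i$ satisfies $\sum_i\lambda_i Q^\pi_{c_i,P} = Q^\pi_{\bar c,P}$ by linearity of $Q$ in $c$, and $g = \nabla J_{\bar c,P}(\pi)$. Moreover, $\sum_i \lambda_i J_{c_i,P}(\cdot) = J_{\bar c,P}(\cdot)$. Hence the right-hand side above equals $J_{\bar c,P}(\pi) - J_{\bar c,P}(\pi^\star)$, which is at most $J_{\bar c,P}(\pi) - \min_{\pi'} J_{\bar c,P}(\pi')$. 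By \cref{lemma:gradient dominance} applied to the single MDP $(\bar c,P)$, with costs still in $[0,1]$, this is at most $D\max_{\pi'}\langle \pi-\pi', \nabla J_{\bar c,P}(\pi)\rangle = D\,G(\pi)$.

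Under \cref{assm:unique-robust-value}, all $Q^\pi_{c_i,P_i}$ equal one $Q$, but the occupancies $d^\pi_{P_i}$ differ, so $g(s,a) = \frac{1}{1-\gamma}\bar d(s) Q(s,a)$ with $\bar d = \sum_i\lambda_i d^\pi_{P_i}$. Here I would instead use a single model, say $(c_1,P_1)$, and the standard gradient-dominance chain. First, $J_{c_1,P_1}(\pi) - J_{c_1,P_1}(\pi^\star) \le \frac{1}{1-\gamma}\sum_s d^{\pi^\star}_{P_1}(s)\max_{a'}\sum_a(\pi(a|s)-\mathbb{1}_{a'})Q(s,a)$, where each inner term is nonnegative. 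Next, bound $d^{\pi^\star}_{P_1}(s) \le 1 \le \bar d(s)/((1-\gamma)\mu(s))$, using $d^\pi_P(s) \ge (1-\gamma)\mu(s)$ for every $P$. The maximization over $a'$ separates across states, so the result is at most $D\max_{\pi'}\langle\pi-\pi', g\rangle$. Since $g$ was the minimizer, this again gives $D\,G(\pi)$.

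The main obstacle is the first reduction step. The inequality $J_\cU(\pi^\star)\ge J_{c_i,P_i}(\pi^\star)$ gives $J_\cU(\pi)-J_\cU(\pi^\star)\le J_{c_i,P_i}(\pi)-J_{c_i,P_i}(\pi^\star)$, and the second case needs this only for one model. The delicate points are confirming nonnegativity of the per-state maxima, which follows by taking $a'$ to minimize $Q$, and the uniform occupancy lower bound $d^\pi_P(s)\ge(1-\gamma)\mu(s)$. This bound is exactly what lets the mixed occupancy $\bar d$ replace the unknown $d^\pi_{P_1}$.
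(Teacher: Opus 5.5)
Your proof is correct and is essentially the paper's argument. Both rest on the same ingredients: $J_\cU(\pi^\star)\ge J_{c_i,P_i}(\pi^\star)$, the performance difference lemma, the occupancy bound $d^{\pi^\star}_{P}(s)\le 1\le d^{\pi}_{P'}(s)/((1-\gamma)\mu(s))$, and the convex-hull form of $\partial J_\cU(\pi)$. Your fixed weights $\lambda$ of the minimizing $g$ simply play the role of the paper's $\min_{\alpha\in\Delta([k])}$. The remaining differences are cosmetic: in the first case you invoke the MDP gradient-dominance lemma for $\bar c=\sum_i\lambda_i c_i$, which is exactly what the paper's inline chain computes since $\sum_i\alpha_i A^\pi_{c_i,P}=A^\pi_{\bar c,P}$, and your finite convex combination for $g$ covers infinite $\cU^\pi$ directly rather than by appeal to probability measures.
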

\begin{proof}
    \looseness=-1
    For simplicity, we prove the result in the case where $\cU^\pi$ is finite, say $\cU^\pi = \brace*{(c_1, P_1), \dots, (c_k, P_k)}$. The infinite case is analogous, replacing convex combinations over $[k]$ by probability measures over $\cU^\pi$. It holds that,
    \begin{align}
        J_{\cU}(\pi) - J_{\cU}(\pi^\star) 
        &\numeq{\leq}{a} \min_{i \in [k]} J_{c_i, P_i}(\pi) - J_{c_i, P_i}(\pi^\star) 
        \numeq{=}{b} \min_{i \in [k]} \frac{1}{1-\gamma} \sum_{s} \socc{\pi^\star}_{P_i}(s) \sum_a \pi^\star(a|s) A^{\pi}_{c_i, P_i}(s, a)\nonumber\\
        &\numeq{=}{c} \min_{\alpha \in \Delta([k])} \frac{1}{1-\gamma} \sum_{i=1}^k \alpha_i \sum_{s} \socc{\pi^\star}_{P_i}(s)\sum_a \pi^\star(a|s) A^{\pi}_{c_i, P_i}(s, a) \fd \circled{1}\;,\label{eq:alpha-min-expression}
    \end{align}
    \looseness=-1
    where (a) uses $J_{\cU}(\pi^\star) \geq J_{c, P}(\pi^\star)$ for any $(c, P)$, (b) uses the performance difference lemma for MDPs (\cref{lemma:performance-difference}), and (c) holds since the minimum is attained at an extreme point of the simplex.

    \paragraph{When $\cP^\pi$ is a singleton (\cref{assm:unique-worst-case-occupancy}).} Let $P \in \cP^\pi$ be the unique worst-case transition kernel under $\pi$. The right-hand side of \cref{eq:alpha-min-expression} is bounded as
    \begin{align*}
        \circled{1}
        &\leq \frac{1}{1-\gamma} \min_{\alpha \in \Delta([k])} \sum_{s} \socc{\pi^\star}_P(s) \underbrace{\max_a \sum_{i=1}^k \alpha_i A^{\pi}_{c_i, P}(s, a)}_{\geq 0}\\
        &\numeq{\leq}{a} \frac{1}{1-\gamma} \min_{\alpha \in \Delta([k])} \sum_{s} \frac{1}{(1-\gamma)\mu(s)} \socc{\pi}_P(s)\max_a \sum_{i=1}^k \alpha_i A^{\pi}_{c_i, P}(s, a)\\
        &\numeq{\leq}{b} D \min_{\alpha \in \Delta([k])}\max_{\pi'\in \Pi} \sum_{s,a} \paren*{\pi(a|s) - \pi'(a|s)}\frac{1}{1-\gamma} \socc{\pi}_P(s)\sum_{i=1}^k \alpha_i \qf{\pi}_{c_i, P}(s, a)
        \numeq{=}{c} D G(\pi)\;,
    \end{align*}
    where (a) uses $\socc{\pi^\star}_P(s) / \socc{\pi}_P(s) \leq 1 / (1-\gamma)\mu(s)$ and (b) uses $A^\pi_{c, P}(s, a) = \sum_{a'} \pi(a'|s) Q^\pi_{c, P}(s, a') - Q^\pi_{c, P}(s, a)$ and introduces $D \df ((1-\gamma)\min_s \mu(s))^{-1}$. The equality (c) follows from the convex hull expression of $\partial J_{\cU}(\pi)$ (\cref{lemma:policy subgradient}). This proves \cref{theorem:PSD-succeeds} under \cref{assm:unique-worst-case-occupancy}.

    \paragraph{When $\cQ^\pi$ is a singleton (\cref{assm:unique-robust-value}).} 
    Let $Q^\pi_\cU \in \cQ^\pi$ be the unique worst-case action-value function under $\pi$. Note that the advantage function is also unique and denoted by $A^\pi_\cU(s, a) \df \sum_{a'} \pi(a'|s) Q^\pi_\cU(s, a') - Q^\pi_\cU(s, a)$. Then, the right-hand side of \cref{eq:alpha-min-expression} is bounded as
    \begin{align*}
        \circled{1}
        &\leq \frac{1}{1-\gamma} \min_{\alpha \in \Delta([k])} \sum_{s} \sum_{i=1}^k \alpha_i \socc{\pi^\star}_{P_i}(s) \underbrace{\max_a A^{\pi}_\cU(s, a)}_{\geq 0}\\
        &\numeq{\leq}{a} \frac{1}{1-\gamma} \min_{\alpha \in \Delta([k])} \sum_{s}\sum_{i=1}^k \alpha_i \frac{1}{(1-\gamma)\mu(s)} \socc{\pi}_{P_i}(s)\max_a A^{\pi}_{\cU}(s, a)\\
        &\numeq{\leq}{b} D \min_{\alpha \in \Delta([k])}\max_{\pi'\in \Pi} \sum_{s,a} \paren*{\pi(a|s) - \pi'(a|s)}\sum_{i=1}^k \alpha_i \frac{1}{1-\gamma} \socc{\pi}_{P_i}(s) \qf{\pi}_{\cU}(s, a)
        \numeq{=}{c} D G(\pi)\;,
    \end{align*}
    where (a), (b), and (c) follow similarly as in the previous case. This concludes the proof.
\end{proof}

\looseness=-1
Combining \cref{theorem:PSD-succeeds} with the stationarity guarantee in \cref{lemma:stationary-convergence} and the Moreau-envelope bound \cref{eq:subgrad-dom-Moreau}, we obtain the following convergence guarantee.

\begin{corollary}[PSD convergence in RMDPs]\label{corollary:PSD-convergence}
    Suppose that \cref{assumption:init-dist} holds and either of \cref{assm:unique-worst-case-occupancy} or \ref{assm:unique-robust-value} holds.
    When $\eta = 1/\sqrt{T}$, the PSD updates in \cref{eq:pol-grad-update} satisfies that
    \begin{align}
    \min_{t \in [T]} J_{\cU}(\pi_t) - J_{\cU}(\pi^\star)
    \leq C T^{-1/4} \;,
    \quad \text{ where }\; C \df D'\sqrt{\frac{4}{1-\gamma} + \frac{4 \gamma\aA^2}{(1-\gamma)^7}}\;.
    \end{align}
\end{corollary}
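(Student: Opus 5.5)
This is a direct chaining of three results already in the paper: \cref{theorem:PSD-succeeds}, the Moreau-envelope bound \cref{eq:subgrad-dom-Moreau}, and the stationarity guarantee \cref{lemma:stationary-convergence}.

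First, under \cref{assumption:init-dist} together with either \cref{assm:unique-worst-case-occupancy} or \cref{assm:unique-robust-value}, \cref{theorem:PSD-succeeds} shows that $J_\cU$ is subgradient-dominant with constant $D=((1-\gamma)\min_s\mu(s))^{-1}$. That is, \cref{eq:strong-subgrad-dom} holds for every $\pi\in\Pi$.

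Second, I would feed this into the derivation of \cref{eq:subgrad-dom-Moreau}, deferred to \cref{sec:subgrad-dom-Moreau-derivation}, which takes \cref{eq:strong-subgrad-dom} as its hypothesis. It gives, for every $\pi\in\Pi$,
\[
J_\cU(\pi)-J_\cU(\pi^\star)\le D'\,\bigl\|\nabla(\ME_{\frac{1}{2\ell}}\comp\barJ_\cU)(\pi)\bigr\|_2,
\qquad D' = 2D\sqrt{\aS}+\frac{2\ell\sqrt{\aA}}{(1-\gamma)^2}.
\]
This uses $\min_{\pi'}J_\cU(\pi')=J_\cU(\pi^\star)$.

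The one point to check is that this derivation used nothing beyond \cref{eq:strong-subgrad-dom}, Lipschitz continuity of $J_\cU$, and weak convexity with parameter $\ell$, so that no additional structure of $\cU$ enters. The natural route is that $\hat\pi=\prox(\pi)$ is near-stationary by \cref{eq:Moreau-grad-to-subgrad}, from which one bounds $G(\hat\pi)$ and $\|\pi-\hat\pi\|$, and then transfers back to $\pi$ by Lipschitzness. Since all of this is stated before the corollary, I will take it as given.

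Third, I apply the bound to each PSD iterate $\pi_t\in\Pi$ (iterates lie in $\Pi$ by projection) and take the minimum over $t\in[T]$:
\[
\min_t J_\cU(\pi_t)-J_\cU(\pi^\star)\le D'\min_t\bigl\|\nabla(\ME_{\frac{1}{2\ell}}\comp\barJ_\cU)(\pi_t)\bigr\|_2 .
\]
With $\eta=1/\sqrt T$, \cref{lemma:stationary-convergence} bounds the right-hand side by $D'\,T^{-1/4}\sqrt{4/(1-\gamma)+4\gamma\aA^2/(1-\gamma)^7}=CT^{-1/4}$.

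There is no substantive obstacle here. The only care needed is matching constants and confirming that the weak-convexity parameter $\ell$ is the same one in the lemma and in $D'$.
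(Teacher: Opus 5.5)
Your proposal is correct and is the paper's own argument: \cref{theorem:PSD-succeeds} gives subgradient dominance with $D=((1-\gamma)\min_s\mu(s))^{-1}$, the appendix derivation turns this into \cref{eq:subgrad-dom-Moreau} for every $\pi\in\Pi$, and applying that bound at each iterate together with \cref{lemma:stationary-convergence} yields $CT^{-1/4}$. One small nuance: the paper's derivation of \cref{eq:subgrad-dom-Moreau} bounds $G$ at the proximal point using the variational inequality in \cref{lemma:moreau-sugrad-bound}, which is what disposes of the normal-cone part of $\partial\barJ_{\cU}$, rather than using the distance bound \cref{eq:Moreau-grad-to-subgrad} alone; since you take that derivation as given, your proof is unaffected.
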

Here, $D'$ is defined in \cref{eq:subgrad-dom-Moreau}. This result ensures that PSD identifies an $\varepsilon$-optimal policy in $\cO(\varepsilon^{-4})$ iterations under either of \cref{assm:unique-worst-case-occupancy,assm:unique-robust-value}, which restores the algorithmic guarantee claimed by prior works \citep{wang2023policy,kitamura2025near}.

\looseness=-1
In the following subsections, we discuss concrete settings in which each assumption holds.
Clearly, the standard MDP setting with a singleton uncertainty set $\cU = \{(c,P)\}$ satisfies both assumptions.
Nonetheless, \cref{assm:unique-worst-case-occupancy} and \ref{assm:unique-robust-value} are independent: neither implies the other as follows, and each applies to different classes of RMDPs.
A formal proof is provided in \cref{sec:proof of independence}.
\begin{proposition}\label{proposition:independent-conditions}
    There exist RMDP instances that satisfy only one of \cref{assm:unique-worst-case-occupancy} and \ref{assm:unique-robust-value}.
\end{proposition}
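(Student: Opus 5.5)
We prove \cref{proposition:independent-conditions} with two small explicit instances, one for each direction. Both will be single-state (or nearly single-state) RMDPs with two models, so every quantity can be written in closed form.

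\emph{Instance A (\cref{assm:unique-worst-case-occupancy} holds, \cref{assm:unique-robust-value} fails).} We take a cost-robust MDP, i.e.\ $\cP=\{P\}$ is a singleton. Then $\cP^\pi=\{P\}$ for every $\pi$, so \cref{assm:unique-worst-case-occupancy} holds automatically. To break \cref{assm:unique-robust-value}, we use one absorbing state $s$ with self-loop $P(s\mid s,a)=1$, two actions $a_1,a_2$, and $\cC=\{c_1,c_2\}$ with $c_1(s,a_1)=1$, $c_1(s,a_2)=0$, $c_2(s,a_1)=0$, $c_2(s,a_2)=1$. Under the uniform policy $\pi_u$ we have $J_{c_1,P}(\pi_u)=J_{c_2,P}(\pi_u)=\tfrac{1}{2(1-\gamma)}$, so both models are worst-case. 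However, $Q^{\pi_u}_{c_1,P}(s,a_1)=1+\tfrac{\gamma}{2(1-\gamma)}\neq\tfrac{\gamma}{2(1-\gamma)}=Q^{\pi_u}_{c_2,P}(s,a_1)$, so $\cQ^{\pi_u}$ is not a singleton.

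\emph{Instance B (\cref{assm:unique-robust-value} holds, \cref{assm:unique-worst-case-occupancy} fails).} Here we want two distinct transition kernels that yield the same $Q$-function for every policy. The simplest way is to make the cost zero everywhere: take $\cC=\{0\}$ and $\cP=\{P_1,P_2\}$ with $P_1\neq P_2$, for instance on two states where $P_1$ always moves to $s_1$ and $P_2$ always moves to $s_2$. Then $Q^\pi_{0,P}\equiv 0$ for every $P$ and $\pi$, so both kernels are worst-case for every policy. Hence $\cP^\pi=\{P_1,P_2\}$ is not a singleton, while $\cQ^\pi=\{0\}$ is.

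If one objects that Instance B is degenerate, a nondegenerate version uses a constant cost $c\equiv 1$. Then $Q^\pi_{c,P}\equiv \tfrac{1}{1-\gamma}$ for any kernel, and the same conclusion holds.

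The main step to check is the verification in each case, and in particular that the definition of $\cU^\pi$ via the argmax indeed includes both models. This holds by the ties computed above. Nothing else is needed beyond the Bellman equations \eqref{eq:Bellman-equation}.
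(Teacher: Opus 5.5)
Your proposal is correct and follows the paper's proof essentially verbatim. For one direction you use a zero-cost instance with two distinct kernels, so every $Q$-function is identically zero while both kernels are worst-case. For the other you use a cost-robust MDP with two action-swapped cost functions that tie under the uniform policy. The only differences are cosmetic: you use a single absorbing state instead of the paper's two-state chain, and two states instead of three for the zero-cost example. Your constant-cost variant $c\equiv 1$ is also a valid nondegenerate alternative.
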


\subsection{Regularized RMDPs \texorpdfstring{(\cref{assm:unique-worst-case-occupancy,assm:unique-robust-value})}{(Assumptions~11 and 12)}}\label{sec:PSD-succeeds-unique-model}

\looseness=-1
Both \cref{assm:unique-worst-case-occupancy,assm:unique-robust-value} are trivially satisfied when the worst-case model is unique for every policy $\pi$, i.e., when $\abs{\cU^\pi} = 1$. A common approach to enforcing $\abs{\cU^\pi}=1$ is to introduce convex regularization that renders the adversary’s objective strongly convex, thereby guaranteeing a unique worst-case model. For example, \citet{yang2023robust} study the following KL-regularized problem:
\begin{align*}
    \text{(KL-regularized RMDP)}\quad
    \min_{\pi \in \Pi}\max_{P \in \cP} \E^\pi_P\brack*{
        \sum_{h=0}^\infty
        \gamma^{h}
        c(s_h, a_h) - \tau D_{\mathrm{KL}}(P(\cdot \mid s_h, a_h) \| P_0(\cdot \mid s_h, a_h))}\;,
\end{align*}
where $P_0$ is a nominal transition kernel and $\tau > 0$ is a regularization parameter. When $\cP$ is $sa$-rectangular with $\cP_{s,a}=\Delta(\S)$, the worst-case transition for $\pi$ is unique and satisfies
\begin{align*}
    P(\cdot \mid s, a) = \argmax_{p \in \Delta(\S)} \sum_{s'} p(s') V^\pi_{\cU}(s') - \tau D_{\mathrm{KL}}(p \| P_0(\cdot \mid s, a))
    \propto P_0(\cdot \mid s, a) \exp\paren*{\frac{V^\pi_{\cU}(\cdot)}{\tau}}
    \;.
\end{align*}

\looseness=-1
While most existing results focus on rectangular uncertainty sets \citep{yang2023robust,zhang2024soft,derman2021twice}, the assumption $\abs{\cU^\pi}=1$ itself does not require rectangularity.
As an example, consider the following $\ell_2$ regularization on $P$:
\begin{align*}
    \max_{P \in \cP} J_{P}(\pi) - \frac{\tau}{2} \norm{P - P_0}_F^2\;,
\end{align*}
where $\norm{\cdot}_F$ denotes the Frobenius norm. 
Since the total cost $J_P(\pi)$ is weakly concave in $P$ due to its smoothness \citep{atenas2023unified}, the regularized objective becomes strongly concave for sufficiently large $\tau$. Consequently, for sufficiently large $\tau$, the worst-case transition kernel is unique even when $\cP$ is non-rectangular. Characterizing broader classes of non-rectangular uncertainty sets that satisfy $\abs{\cU^\pi}=1$ is an interesting direction for future work.

\subsection{Cost-robust MDPs and Convex MDPs (\texorpdfstring{\cref{assm:unique-worst-case-occupancy}}{Assumption~11})}\label{sec:PSD-succeeds-general-cost}

\looseness=-1
\cref{assm:unique-worst-case-occupancy} is also satisfied in cost-robust MDPs \citep{husain2021regularized,gadot2024solving}, where uncertainty lies solely in the cost function, that is, when $\cU = \cC$ for a compact cost set $\cC$.

\looseness=-1
Cost-robust MDPs are closely related to \emph{convex MDPs} \citep{zhang2020variational,zahavy2021reward}, which study optimization problems that are convex in the occupancy measure under a fixed $P$:
\begin{align*}
    \min_{\pi \in \Pi} f(\occ{\pi}_P) \quad \text{where $f$ is proper convex in $\cD \df \brace*{\occ{\pi}_P \given \pi \in \Pi}$}\;.
\end{align*}
Convex MDPs generalize diverse decision-making problems, including skill learning \citep{eysenbach2019diversity}, inverse reinforcement learning \citep{belogolovsky2021inverse}, imitation learning \citep{ho2016generative}, pure exploration \citep{hazan2019provably}, and constrained MDPs \citep{altman1999constrained}. We refer readers to \citet{zahavy2021reward} for a comprehensive overview.

\looseness=-1
Clearly, convex MDPs generalize cost-robust MDPs via
$f(d) = \max_{c \in \cC} \sum_{s, a} d(s, a) c(s, a)$. Conversely, cost-robust MDPs also include convex MDPs. 
Let $f^*(g) \df \sup_{d \in \dom f} \sum_{s,a} g(s,a)d(s,a) - f(d)$ denote the convex conjugate of $f$ (see, e.g., \citealp{rockafellar2009variational} Chapter~11 for details).
Then, the convex MDP problem can be rewritten as
\begin{align*}
    \min_{\pi \in \Pi} f\paren*{\occ{\pi}_{P}}
    \numeq{=}{a} \min_{d \in \cD} f\paren*{d}
    \numeq{=}{b} \min_{d \in \cD} \max_{g \in \dom f^*} \sum_{s,a}g(s,a)d(s,a) - f^*(g)
    \numeq{=}{c} \min_{\pi \in \Pi} J_{\cC}(\pi)\;,
\end{align*}
where (a) follows from the one-to-one mapping between $\occ{\pi}_{\mdp}$ and $\pi$ \citep{puterman1990markov}, (b) uses definition of the convex conjugate, and (c) uses the one-to-one mapping again and substitutes a cost set 
$\cC = \brace*{c \in \R^{\aSA} \given c(s,a) = g(s,a) - f^*(g), g \in \dom f^*}$.
This equivalence shows that convex MDPs satisfy \cref{assm:unique-worst-case-occupancy} and thus enjoys the subgradient dominance property.

\looseness=-1
We finally note that extending convex MDPs to include transition uncertainty breaks subgradient dominance, even when $\cP$ is $sa$-rectangular. Since convex MDPs subsume constrained MDPs, the extended setting encompasses the NP-hard RCMDP instances shown in \cref{proposition:NP-hard-sa-rect}.

\subsection{\texorpdfstring{$r$}{r}-Rectangular RMDPs (\texorpdfstring{\cref{assm:unique-robust-value}}{Assumption~12})}\label{sec:PSD-succeeds-rectangular}

\looseness=-1
It is well known that $sa$-rectangular RMDPs satisfy $\abs{\cQ^\pi}=1$ for every policy $\pi$, and therefore meet \cref{assm:unique-robust-value} \citep{iyengar2005robust,nilim2005robust}.
We show that this property extends to the strictly more general class of $r$-rectangular RMDPs.

\begin{proposition}\label{proposition:r-rect-unique-value}
    \cref{assm:unique-robust-value} holds under $r$-rectangularity.
\end{proposition}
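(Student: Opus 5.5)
Under $r$-rectangularity the uncertainty set is $\cU=\cP$ with a single cost $c$ (the cost set is a singleton, or at least the transition and cost choices decouple). The plan is to show two things. First, there is a single kernel $P^\star\in\cP$ whose value function dominates every other kernel's value function pointwise in the state. Second, every kernel in $\cP^\pi$ has exactly the same $Q$-function as $P^\star$.

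The first step is to set up a robust Bellman operator in the factor representation. Fix $\pi$, and write $P(s'\mid s,a)=\sum_i \phi_i(s,a)w_i(s')$. The expected next value then separates across the factors:
\[
\sum_{s'}P(s'\mid s,a)V(s')=\sum_i\phi_i(s,a)\langle w_i,V\rangle .
\]
Because $\mathcal W_1\times\dots\times\mathcal W_r$ is a product and $\phi_i\ge 0$, each factor can be maximized separately. This suggests the operator
\[
(T V)(s)=\sum_a\pi(a\mid s)\Big[c(s,a)+\gamma\sum_i\phi_i(s,a)\max_{w_i\in\mathcal W_i}\langle w_i,V\rangle\Big].
\]
It is monotone and a $\gamma$-contraction in $\|\cdot\|_\infty$, since $\phi(s,a)\in\Delta([r])$ and $w_i\in\Delta(\S)$, so it has a unique fixed point $V^\star$. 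Compactness of the $\mathcal W_i$ gives maximizers $w_i^\star\in\argmax\langle w_i,V^\star\rangle$. The resulting kernel $P^\star$ lies in $\cP$ and satisfies $V^\pi_{c,P^\star}=V^\star$.

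Next I show $V^\pi_{c,P}\le V^\star$ for every $P\in\cP$. The value $V^\pi_{c,P}$ is the fixed point of $T_P$, and $T_P V\le TV$ for every $V$. Monotonicity of $T$ and the contraction property then give $V^\pi_{c,P}\le V^\star$ pointwise. As a consequence, $J_\cP(\pi)=\mu^\top V^\star$.

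The crucial step is to show that every worst-case kernel shares $V^\star$ and the same $Q$. Take $P\in\cP^\pi$, so that $\mu^\top V^\pi_{c,P}=\mu^\top V^\star$. Under \cref{assumption:init-dist}, combined with the pointwise inequality, this forces $V^\pi_{c,P}=V^\star$. Note that the proposition itself does not list \cref{assumption:init-dist}; I would either invoke it as in \cref{theorem:PSD-succeeds}, or argue more carefully as follows.

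With $V_P=V^\star$, the Bellman equation for $P$ and the fixed-point equation give
\[
\sum_a\pi(a\mid s)\sum_i\phi_i(s,a)\Big(\max_{w}\langle w,V^\star\rangle-\langle w_i^P,V^\star\rangle\Big)=0 .
\]
Every term is nonnegative, so $\langle w^P_i,V^\star\rangle=\max\langle w,V^\star\rangle$ whenever $\pi(a\mid s)\phi_i(s,a)>0$. This yields $Q^\pi_{c,P}(s,a)=Q^\star(s,a)$ only for actions in the support of $\pi$.

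This gap is the main obstacle: off-support actions, and also factors $i$ that are never reached, could have a non-maximizing $w_i$ and hence a different $Q$. I would try to close it with a dependency argument. If some factor $w_i$ with $\phi_i(s,a)>0$ is never used under $\pi$ from any state with positive occupancy, then changing it does not change $J$, and the resulting $Q$ at $(s,a)$ can differ. If that happens, I expect the right statement to require $P$ to be worst case at all states; in the RMDP literature $\cU^\pi$ is often defined this way. The fallback is therefore to interpret the worst-case set via the value at every state, that is, $V^\pi_{c,P}=V^\star$. That holds automatically under full support on the occupancy, and in particular when every factor is reached.

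Alternatively, and more cleanly, I would note that under \cref{assumption:init-dist} with a full-support $\pi$ every $(s,a)$ has positive weight, so the equality holds at every factor $i$ with $\phi_i(s,a)>0$. This gives uniqueness of $Q$ on the interior of $\Pi$, and I would check whether the proof of \cref{theorem:PSD-succeeds} needs it only there, or only the bound step (b), which uses $Q$ weighted by $\pi-\pi'$.
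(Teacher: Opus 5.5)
\paragraph{The obstacle you flag cannot be closed: the statement is false at boundary policies.} The problem is not your method. Under the paper's definitions ($\cU^\pi=\argmax_{(c,P)\in\cU}J_{c,P}(\pi)$, with $\cQ^\pi$ the set of the corresponding $Q$-functions), \cref{assm:unique-robust-value} fails already for $sa$-rectangular sets, which are $r$-rectangular with one factor per state--action pair.

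Here is a counterexample.
\begin{itemize}
\item Take states $s_0,s_g,s_b$, with $s_g,s_b$ absorbing, cost $1$ at $s_b$ and $0$ elsewhere, and $\mu$ uniform (so full support holds).
\item At $s_0$, action $a_1$ moves to $s_g$ deterministically, while $\cP_{s_0,a_2}$ is the set of all distributions on $\{s_g,s_b\}$.
\item Let $\pi$ play $a_1$ at $s_0$. Then $J_P(\pi)$ does not depend on $P(\cdot\mid s_0,a_2)$, so $\cP^\pi=\cP$.
\item But $Q^\pi_P(s_0,a_2)=\gamma P(s_b\mid s_0,a_2)/(1-\gamma)$ sweeps all of $[0,\gamma/(1-\gamma)]$.
\end{itemize}
So $\cQ^\pi$ is not a singleton. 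The paper's remark that this is ``well known'' for $sa$-rectangular sets fails for the same reason.

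The paper's own proof has exactly the hole you found. It asserts, citing Goyal and Grand-Cl\'ement, that $\beta_i=\langle w_i,V^\pi_P\rangle$ is the same for every $P\in\cP^\pi$. That is only true for factors charged by $\pi$, i.e.\ with $\pi(a\mid s)\phi_i(s,a)>0$ for some $(s,a)$, given full-support $\mu$; without full support, even $V^\pi_{c,P}$ need not be unique. This is precisely what your nonnegative-terms identity delivers.

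Your fallback does not rescue the statement either. Requiring $V^\pi_{c,P}=V^\star$ at every state is what you already have under \cref{assumption:init-dist}, and your own identity shows that this constraint pins down only charged factors. Your phrase ``when every factor is reached'' is the right condition, but it is a property of $\pi$, not of $\cP$, and it fails at deterministic policies. What would pin $Q$ is replacing $\cQ^\pi$ by the robust $Q^\star(s,a)=c(s,a)+\gamma\sum_i\phi_i(s,a)\max_{w\in\mathcal{W}_i}\langle w,V^\star\rangle$. That is a different object from the one in \cref{assm:unique-robust-value}.

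\paragraph{What your construction does prove, and why it is enough downstream.} Under \cref{assumption:init-dist}, which you are right to invoke, your argument correctly establishes three facts for every $P\in\cP^\pi$:
\begin{itemize}
\item $V^\pi_{c,P}=V^\star$;
\item $Q^\pi_{c,P}(s,a)=Q^\star(s,a)$ whenever $\pi(a\mid s)>0$;
\item $Q^\pi_{c,P}\le Q^\star$ everywhere.
\end{itemize}
This gives uniqueness on the interior of $\Pi$. It also gives a one-sided domination, and that is all \cref{theorem:PSD-succeeds} actually needs for $r$-rectangular sets.

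Take any $g=\sum_k\alpha_k\nabla J_{c,P_k}(\pi)\in\partial J_\cU(\pi)$ and set $\bar d=\sum_k\alpha_k d^\pi_{P_k}$. Then $\sum_a\pi(a\mid s)g(s,a)=\bar d(s)V^\star(s)/(1-\gamma)$ and $g(s,a)\le\bar d(s)Q^\star(s,a)/(1-\gamma)$. Hence
\[
\max_{\pi'}\langle\pi-\pi',g\rangle\ge\frac{1}{1-\gamma}\sum_s\bar d(s)\max_a\bigl(V^\star(s)-Q^\star(s,a)\bigr).
\]
Now apply the performance difference lemma to your dominating kernel $P^\star$, which lies in $\cP^\pi$, and use $\bar d(s)\ge(1-\gamma)\mu(s)$. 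This yields $J_\cU(\pi)-J_\cU(\pi^\star)\le D\,G(\pi)$ at every $\pi$, boundary included.

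The correct repair is therefore to weaken the proposition to ``unique on the interior, and dominated with equality on the support'', not to push your argument further. Compared with the paper's one-line citation, your Bellman-operator route is self-contained, and it is exactly what exposes the support condition that the citation hides.
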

\begin{proof}
    \looseness=-1
    For simplicity, we consider the case where the cost set $\cC$ is a singleton, so that $\cU=\cP$.
    The extension to nontrivial cost sets is straightforward.

    \looseness=-1
    Recall the definition of $r$-rectangularity in \cref{eq:r-rectangular}.
    Let $\cP^\pi = \argmax_{P \in \cP} J_{P}(\pi)$ denote the set of worst-case transition kernels under $\pi$. For any $P \in \cP^\pi$, the Bellman equation yields
    \begin{equation}\label{eq:r-rec-bellman}
        Q^\pi_{P}(s, a) = c(s, a) + \gamma \sum_{s'} P(s'\mid s, a) V^\pi_P(s') = c(s, a) + \gamma \sum^r_{i=1} \phi_i(s, a) \underbrace{\sum_{s'} w_i(s') V^\pi_P(s')}_{\fd \beta_i}\;.
    \end{equation}
    In the last part, the term $\beta_i$ is known to be unique for all $P \in \cP^\pi$ (\citealp{goyal2023robust}, Proposition 3.1).
    Consequently, \cref{eq:r-rec-bellman} implies that $Q^\pi_P$ is unique for all $P \in \cP^\pi$.
\end{proof}

%% file: sections/discussion.tex
\section{Related Work and Open Questions}\label{sec:discussion}

\looseness=-1
This paper corrects a prevailing misconception that PSD is guaranteed to find $\varepsilon$-optimal policies in general RMDPs.
We construct an explicit counterexample in which PSD converges to a suboptimal policy (\cref{sec:failure of PSD}) and identify two sufficient conditions---\cref{assm:unique-worst-case-occupancy,assm:unique-robust-value}---under which RMDPs satisfy the subgradient dominance property (\cref{sec:PSD-succeeds}).
This section situates our contributions within the existing literature on RMDPs and discusses the limitations of our results.

\paragraph{Hardness of approximate optimization.}
\looseness=-1
While the hardness of solving RMDPs has long been recognized, it has remained unclear whether finding an $\varepsilon$-optimal policy is hard even when the robust total cost and its subgradients are efficiently computable. This ambiguity has led to incorrect convergence claims for PSD under general uncertainty sets \citep{wang2023policy,kitamura2025near}.

\looseness=-1
The seminal work of \citet{wiesemann2013robust} shows that evaluating the objective of general RMDPs is strongly NP-hard, but does not provide the hardness of policy optimization.
\citet{bagnell2001solving} establish NP-hardness of policy optimization only for deterministic optimal policies.
\citet{mannor2012lightning} claim hardness of approximate optimization under general uncertainty sets; however, the proof is inaccessible, and it is unclear whether the hardness is due to evaluation or optimization.
More recently, \citet{ou2025sequential} prove NP-hardness of finding an optimal policy for finite uncertainty sets, but do not address approximate optimization.

\looseness=-1
In contrast, our results (\cref{proposition:NP-hardness,proposition:NP-hard-sa-rect}) establish NP-hardness of $\varepsilon$-optimal policy identification in two settings that admit efficient computation of both the robust total cost and its subgradients:
({\rm i}) RMDPs with a finite transition set, and
({\rm ii}) RMDPs with a finite $sa$-rectangular transition set and a finite cost set.
Additionally, our \cref{example:RMDP-failure} demonstrates a concrete failure case of PSD in the finite uncertainty set setting, further corroborating our hardness results.

\paragraph{First-order methods for RMDPs.}
Several works have established performance guarantees for first-order methods in specific classes of RMDPs. \citet{li2022first} prove global convergence of state-wise mirror descent for $sa$-rectangular uncertainty sets, while \citet{wang2022policy} analyze PSD for $R$-contamination uncertainty sets, which are also $sa$-rectangular. More recently, convex MDPs have been shown to satisfy subgradient dominance due to an underlying hidden convexity structure \citep{barakat2025global,fatkhullin2025stochastic}.

\looseness=-1
All of these settings are captured by the sufficient conditions identified in \cref{sec:PSD-succeeds}. In addition, we show that $r$-rectangular and regularized RMDPs satisfy our conditions, that were not previously known to admit subgradient dominance.

\looseness=-1
\paragraph{Open questions.}
Our sufficient conditions do not cover $s$-rectangular RMDPs, despite the existence of efficient DP-based algorithms for this setting \citep{wiesemann2013robust,grand2021scalable}.
This gap makes the $s$-rectangular case particularly intriguing: while DP can efficiently solve such RMDPs, whether PSD converges to globally optimal policies remains open.

\looseness=-1
We conjecture that $s$-rectangular RMDPs satisfy the subgradient dominance property and that dominance can hold under conditions weaker than \cref{assm:unique-worst-case-occupancy,assm:unique-robust-value}. The following proposition shows that these assumptions are not necessary.
\begin{proposition}\label{proposition:s-rectangular-subgrad-dom}
There exist $s$-rectangular RMDPs that satisfy the subgradient dominance property but satisfy neither \cref{assm:unique-worst-case-occupancy} nor \cref{assm:unique-robust-value}.
\end{proposition}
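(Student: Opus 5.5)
We will prove the proposition with an explicit small $s$-rectangular instance and verify subgradient dominance directly. The plan is to build an instance in which every worst-case model set $\cU^\pi$ contains at least two models, for some policies $\pi$. These models should differ both in the transition kernel and in the resulting $Q$-function, so that neither \cref{assm:unique-worst-case-occupancy} nor \cref{assm:unique-robust-value} holds. At the same time the instance should be simple enough that $J_\cU$ can be computed in closed form.

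\textbf{Construction.} Take a single nonterminal state $s_0$ with two actions $a_1,a_2$, plus absorbing states $s_g$ (cost $0$) and $s_b$ (cost $1$ per step, so value $1/(1-\gamma)$). Put zero immediate cost at $s_0$. Let the $s_0$-component of the $s$-rectangular set be $\cP_{s_0}=\{P^1,P^2\}$. Under $P^1$, action $a_1$ goes to $s_b$ and $a_2$ goes to $s_g$; under $P^2$, action $a_1$ goes to $s_g$ and $a_2$ goes to $s_b$. The absorbing states have singleton components. Because only one state is uncertain, this set is trivially $s$-rectangular.

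The pair $(P^1,P^2)$ is not $sa$-rectangular, since the adversary cannot choose $s_b$ for both actions. Writing $p=\pi(a_1|s_0)$, we get $J_{P^1}(\pi)\propto p$ and $J_{P^2}(\pi)\propto 1-p$, with a common factor $\mu(s_0)\gamma/(1-\gamma)$. Hence $J_\cU(\pi)\propto\max(p,1-p)$, which is minimized at $p=1/2$. Full support of $\mu$ (\cref{assumption:init-dist}) is kept by giving $s_g,s_b$ small positive mass; this only adds a constant to $J$.

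\textbf{Failure of the assumptions.} At the optimum $p=1/2$ both kernels are worst case, so $\cU^\pi=\{P^1,P^2\}$ and \cref{assm:unique-worst-case-occupancy} fails. The $Q$-functions also differ: $Q_{P^1}(s_0,a_1)=\gamma/(1-\gamma)$ while $Q_{P^2}(s_0,a_1)=0$, so \cref{assm:unique-robust-value} fails as well.

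\textbf{Dominance.} We verify \cref{eq:strong-subgrad-dom} by cases using \cref{lemma:policy subgradient}.

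For $p\neq 1/2$ the worst-case model is unique. The subgradient is then a gradient, and \cref{lemma:gradient dominance} applied to that fixed MDP gives the bound directly: $J_\cU(\pi)-J_\cU(\pi^\star)\le J_{P}(\pi)-J_P(\pi^\star)$ for the active $P$. Alternatively, we can compute both sides. Both the suboptimality gap and $G(\pi)$ are explicit piecewise-linear functions of $p$, and $G(\pi)$ is proportional to $|p-1/2|$ times a gradient gap bounded below, up to the $\socc{\pi}$ factor.

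At $p=1/2$ the gap is $0$. Here $G(\pi)\ge 0$ holds automatically, because $\max_{\pi'}$ includes $\pi'=\pi$.

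\textbf{Main obstacle.} The most delicate step is making the constant $D$ explicit and uniform as $p\to 1/2$. We handle it by computing $G(\pi)$ exactly. At $p>1/2$ the gradient (only $P^1$ active) has $s_0$-entries $(\socc{\pi}(s_0)/(1-\gamma))\cdot(\gamma/(1-\gamma),0)$. This gives $G(\pi)=(1-p)\cdot 0+p\cdot(\cdots)$ minus the minimum, i.e.\ $G(\pi)=\socc{\pi}(s_0)\gamma p/(1-\gamma)^2$. The gap is $\mu(s_0)\gamma(p-1/2)/(1-\gamma)$. The ratio is therefore bounded by $(1-\gamma)^{-1}$ times a constant, since $\socc{\pi}(s_0)\ge(1-\gamma)\mu(s_0)$. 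The case $p<1/2$ is symmetric, and this establishes dominance with a finite $D$.
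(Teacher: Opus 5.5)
Your proof is correct and uses the same template as the paper. There is a single uncertain state whose two kernels swap which action leads to the costly absorbing state, so both kernels tie at $p=1/2$ with different $Q$-functions. Two differences are worth noting, and both favor your version.

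\textbf{Construction.} The paper lets the ``good'' action self-loop at $s_0$; you send it to a separate zero-cost absorbing state $s_g$. With the self-loop, the value under $P_1$ is actually $\frac{\gamma(1-p)}{(1-\gamma)(1-\gamma p)}$, not $\frac{\gamma(1-p)}{1-\gamma}$. Each branch is then concave in $p$, so the paper's convexity argument for $\max\{p,1-p\}$ does not literally apply to its own instance. Dominance still holds there, but it needs a direct check. In your instance nothing re-enters $s_0$, so $J_{\mathcal{U}}$ is exactly an affine function of $\max\{p,1-p\}$.

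\textbf{Verification of dominance.} You apply single-MDP gradient dominance at every $\pi$ with a unique worst-case model, using $J_{\mathcal{U}}(\pi)=J_P(\pi)$ and $J_{\mathcal{U}}(\pi^\star)\ge J_P(\pi^\star)$. At the tie point you note that the gap is $0$ while $G(\pi)\ge 0$. This needs no convexity in $p$. It also works directly with the policy-space subgradients from the policy gradient formula, which the paper's scalar argument glosses over. Finally, it makes explicit that dominance can only fail at policies with several active worst-case models.

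\textbf{Minor corrections.} Your side remark that $G(\pi)$ is proportional to $|p-1/2|$ is wrong. For $p>1/2$ the maximizing $\pi'$ puts all mass on $a_2$, so $G(\pi)=\mu(s_0)\gamma p/(1-\gamma)$, as your later explicit computation correctly shows. There is also no real obstacle as $p\to 1/2$: the gap vanishes while $G$ stays bounded below by roughly $\mu(s_0)\gamma/(2(1-\gamma))$. Since $s_0$ is never re-entered, $d^\pi(s_0)=(1-\gamma)\mu(s_0)$ holds exactly, not just as a lower bound. The ratio of gap to $G$ is therefore $(p-1/2)/p\le 1/2$, so $D=1/2$ works. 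This constant does not depend on the mass you place on $s_g,s_b$, so full support is not even needed. By contrast, the constant from the gradient-dominance lemma blows up as that mass shrinks.
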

\looseness=-1
The proof is provided in \cref{sec:proof of independence}.
Identifying weaker conditions that encompass $s$-rectangular RMDPs while unifying the settings in \cref{sec:PSD-succeeds} is an interesting direction for future work.

%% file: sections/useful-lemma.tex
\section{Useful Lemmas}\label{appendix:useful-lemmas}

\begin{lemma}[The performance difference lemma; \citealp{kakade2002approximately}]\label{lemma:performance-difference}
  For any policies $\pi, \pi' \in \Pi$,
  $$
    J_{c, P}(\pi) - J_{c, P}(\pi') = \frac{1}{1-\gamma} \sum_{s, a} \socc{\pi'}_{c, P}(s) \pi'(a|s) A^{\pi}_{c, P}(s, a)\;.
  $$
\end{lemma}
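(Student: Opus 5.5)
The plan is the standard telescoping argument: express the value of $\pi'$ as the value of $\pi$ plus accumulated one-step "advantage corrections" along trajectories of $\pi'$. Fix $(c,P)$ and write $V^\pi = \vf{\pi}_{c,P}$, $Q^\pi = \qf{\pi}_{c,P}$ and $A^\pi = A^\pi_{c,P}$; since the model is fixed, $\socc{\pi'}_{c,P}$ is just $\socc{\pi'}_P$. Consider a trajectory $s_0\sim\mu$, $a_h\sim\pi'(\cdot\mid s_h)$, $s_{h+1}\sim P(\cdot\mid s_h,a_h)$. Costs lie in $[0,1]$, so $V^\pi$ is bounded by $(1-\gamma)^{-1}$, and with $\gamma<1$ every series below converges absolutely. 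This justifies exchanging sums and expectations and dropping the tail $\gamma^{H}V^\pi(s_H)\to 0$.

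\textbf{Step 1 (telescoping).} Write
\[
J_{c,P}(\pi') = \E^{\pi'}_P\Big[\sum_{h\ge 0}\gamma^h c(s_h,a_h)\Big]
= \E^{\pi'}_P\Big[\sum_{h\ge 0}\gamma^h\big(c(s_h,a_h)+\gamma V^\pi(s_{h+1})-V^\pi(s_h)\big)\Big] + \E_{s_0\sim\mu}[V^\pi(s_0)].
\]
The added terms telescope to $-V^\pi(s_0)$, and the last expectation equals $J_{c,P}(\pi)$.

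\textbf{Step 2 (Bellman equation).} Condition on $(s_h,a_h)$ and use the Markov property. By \cref{eq:Bellman-equation},
\[
\E[c(s_h,a_h)+\gamma V^\pi(s_{h+1})\mid s_h,a_h]=Q^\pi(s_h,a_h).
\]
Hence each summand has expectation $\E[Q^\pi(s_h,a_h)-V^\pi(s_h)] = -\E[A^\pi(s_h,a_h)]$.

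\textbf{Step 3 (occupancy measure).} Convert the discounted sum to the occupancy measure using its definition:
\[
\E^{\pi'}_P\Big[\sum_h\gamma^h f(s_h,a_h)\Big]=\frac{1}{1-\gamma}\sum_{s,a}\occ{\pi'}_P(s,a)f(s,a),
\]
together with the factorization $\occ{\pi'}_P(s,a)=\socc{\pi'}_P(s)\pi'(a\mid s)$, which holds because $a_h\sim\pi'(\cdot\mid s_h)$. This gives
\[
J_{c,P}(\pi')-J_{c,P}(\pi) = -\frac{1}{1-\gamma}\sum_{s,a}\socc{\pi'}_P(s)\pi'(a\mid s)A^\pi(s,a).
\]
Rearranging yields the stated identity, with the sign matching the paper's convention $A^\pi = V^\pi - Q^\pi$.

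No step is genuinely hard. The only points needing care are, first, the interchange of the infinite sum with expectation and the vanishing of the telescoping tail, both handled by boundedness and $\gamma<1$. Second, the sign convention for $A^\pi$ must be tracked correctly.
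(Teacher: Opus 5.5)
Your proposal is correct and is the standard telescoping proof from the cited work of Kakade and Langford; the paper gives no proof of its own beyond that citation. Your sign bookkeeping with the paper's convention $A^{\pi}_{c,P} = V^{\pi}_{c,P} - Q^{\pi}_{c,P}$ and your use of the $(1-\gamma)^{-1}$ normalization of the occupancy measure both check out (one minor point: justify absolute convergence by boundedness of the finite cost table rather than by $c \in [0,1]$, since the paper's own hardness construction uses negative costs).
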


\begin{lemma}[MDP smoothness; \citealp{agarwal2021theory}, Lemma 54]\label{lemma:MDP-smooth}
  \looseness=-1
  For any $\pi, \pi' \in \Pi$,
  \begin{equation}
    \norm*{\nabla J_{c, P}(\pi) - \nabla J_{c, P}(\pi') }_2
    \leq \frac{2 \gamma\aA}{(1-\gamma)^3}\norm*{\pi - \pi'}_2 \;.
  \end{equation}
\end{lemma}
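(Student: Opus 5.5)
The plan is to bound the Hessian of $J_{c,P}$ along every direction and then integrate along the segment joining $\pi$ and $\pi'$. Since $\Pi$ is convex, the segment $\pi_t \df \pi + t(\pi'-\pi)$, $t\in[0,1]$, stays in $\Pi$. The fundamental theorem of calculus then gives
\[
\nabla J_{c,P}(\pi') - \nabla J_{c,P}(\pi) = \int_0^1 \nabla^2 J_{c,P}(\pi_t)\,(\pi'-\pi)\,dt .
\]
So it suffices to show $\|\nabla^2 J_{c,P}(\pi)\|_{2\to 2}\le L \df 2\gamma\aA/(1-\gamma)^3$ for all $\pi\in\Pi$.

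Here $J_{c,P}$ is understood as the rational function $\mu^\top (I-\gamma P^\pi)^{-1} c^\pi$. This function is well defined and smooth on a neighborhood of $\Pi$, since $P^\pi$ and $c^\pi$ are linear in $\pi$. Because the Hessian is symmetric, its operator norm equals $\sup_{\|u\|_2=1}|u^\top \nabla^2 J_{c,P}(\pi)\, u|$. We therefore only need a bound on the second directional derivative $\frac{d^2}{d\alpha^2} J_{c,P}(\pi+\alpha u)\big|_{\alpha=0}$ for unit vectors $u$, evaluated at points of $\Pi$.

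For the directional computation, write $M(\alpha)\df (I-\gamma P_\alpha)^{-1}$, where $P_\alpha \df P^{\pi+\alpha u}$ and $c_\alpha \df c^{\pi+\alpha u}$. Both are affine in $\alpha$, so their second derivatives vanish. Let $\dot P \df dP_\alpha/d\alpha$ and $\dot c \df dc_\alpha/d\alpha$. Using $\frac{d}{d\alpha}M = \gamma M \dot P M$, differentiating $V_\alpha = M(\alpha) c_\alpha$ twice gives
\[
\ddot V = 2\gamma^2 M\dot P M \dot P M c + 2\gamma M \dot P M \dot c .
\]
The needed ingredients are then:
\begin{itemize}
\item $\|M\|_{\infty\to\infty}\le (1-\gamma)^{-1}$;
\item $\|\dot P x\|_\infty \le \max_s \sum_a |u(a|s)|\,\|x\|_\infty \le \sqrt{\aA}\,\|x\|_\infty$, by Cauchy--Schwarz and $\|u\|_2=1$;
\item $\|\dot c\|_\infty \le \sqrt{\aA}$ and $\|c\|_\infty\le 1$, since $c\in[0,1]$.
\end{itemize}
Combining these yields a sup-norm bound on $\ddot V$ of order $\gamma\aA/(1-\gamma)^3$. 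Taking the $\mu$-average gives the bound on $\frac{d^2}{d\alpha^2}J_{c,P}$.

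The main obstacle is matching the stated constant exactly. The crude bound above gives $2\gamma^2\aA/(1-\gamma)^3 + 2\gamma\aA/(1-\gamma)^2$, which exceeds $2\gamma\aA/(1-\gamma)^3$ by a lower-order term. To close this gap I would follow the finer accounting of \citet{agarwal2021theory}.
\begin{itemize}
\item First, exploit $\sum_a u(a|s)=0$ whenever $\pi+\alpha u$ stays in the affine hull of $\Pi$. This makes $\dot P x$ and $\dot c$ depend only on deviations of $x$ and $c$ from their means, which tightens some factors.
\item Alternatively, combine the two terms through the identity $M=I+\gamma P_\alpha M$. This absorbs the first-order cost term into the $(1-\gamma)^{-3}$ term.
\end{itemize}
If the exact constant proves delicate, the fallback is to state the Lipschitz constant as the (slightly larger) value that the computation produces. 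This suffices for the downstream uses in \cref{lemma:stationary-convergence}.
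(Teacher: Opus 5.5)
Your argument is correct and is essentially the proof of Lemma~54 in \citet{agarwal2021theory}, which the paper cites rather than reproves. That proof bounds the second directional derivative along unit directions using $\frac{d}{d\alpha}(I-\gamma P_\alpha)^{-1}=\gamma M\dot P M$, $\|M\|_{\infty\to\infty}\le(1-\gamma)^{-1}$, and the $\sqrt{\aA}$ factor from Cauchy--Schwarz. Working with $P^\pi$ and $c^\pi$ instead of the state--action kernel and $Q$ makes no difference.

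The ``main obstacle'' you identify is an arithmetic slip. Your crude bound already equals the stated constant:
\[
\frac{2\gamma^2\aA}{(1-\gamma)^3}+\frac{2\gamma\aA}{(1-\gamma)^2}=\frac{2\gamma\aA}{(1-\gamma)^3}\bigl(\gamma+(1-\gamma)\bigr)=\frac{2\gamma\aA}{(1-\gamma)^3}.
\]
So your last paragraph, including the fallback of weakening the constant, should simply be deleted.

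Do not adopt the first suggested refinement in any case. Restricting $u$ to directions with $\sum_a u(a|s)=0$ only controls the Hessian compressed to the tangent space of $\Pi$. The lemma, however, bounds the full gradient difference in $\R^{\aS\times\aA}$, and for that your reduction to all unit $u$ via symmetry of the Hessian is the right one.
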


\begin{lemma}[Lipschitzness of RMDP objective]\label{lemma:RMDP-lipschitz}
  For any $\pi, \pi' \in \Pi$,
  $$
    \abs*{J_{\cU}(\pi) - J_{\cU}(\pi')} \leq \frac{\sqrt{\aA}}{(1-\gamma)^2} \norm*{\pi - \pi'}_2\;.
  $$
\end{lemma}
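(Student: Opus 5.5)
The plan is to prove the bound for each fixed model $(c,P)\in\cU$ and then pass to the maximum. Since $J_{\cU}(\pi)=\max_{(c,P)\in\cU}J_{c,P}(\pi)$, the elementary inequality $\abs{\max_u f_u-\max_u g_u}\le\sup_u\abs{f_u-g_u}$ reduces the claim to showing that every $J_{c,P}$ is $\frac{\sqrt{\aA}}{(1-\gamma)^2}$-Lipschitz on $\Pi$ with respect to $\norm{\cdot}_2$, with a constant independent of $(c,P)$. Compactness of $\cU$ guarantees that the maxima are attained, but the supremum form of the inequality does not even require this.

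For a fixed model, I would bound the gradient norm uniformly over $\Pi$ using the policy gradient formula \eqref{eq:J-gradient}: $(\nabla J_{c,P}(\pi))(s,a)=\frac{1}{1-\gamma}\socc{\pi}_P(s)\qf{\pi}_{c,P}(s,a)$. Since $c\in[0,1]$, the Bellman equation gives $0\le \qf{\pi}_{c,P}(s,a)\le \frac{1}{1-\gamma}$. Hence
\begin{align*}
\norm*{\nabla J_{c,P}(\pi)}_2^2=\frac{1}{(1-\gamma)^2}\sum_s \socc{\pi}_P(s)^2\sum_a \qf{\pi}_{c,P}(s,a)^2\le \frac{\aA}{(1-\gamma)^4}\sum_s \socc{\pi}_P(s)^2\le\frac{\aA}{(1-\gamma)^4},
\end{align*}
where the last step uses $\sum_s \socc{\pi}_P(s)^2\le(\sum_s\socc{\pi}_P(s))^2=1$, because $\socc{\pi}_P$ is a probability distribution.

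Next I would apply the mean value theorem along the segment $\pi_\lambda=\pi'+\lambda(\pi-\pi')$, $\lambda\in[0,1]$, which stays in $\Pi$ by convexity. The one point needing care is differentiability on the segment. $J_{c,P}(\pi)=\mu^\top(I-\gamma P^\pi)^{-1}c^\pi$ is a rational function of the entries of $\pi$, smooth on a neighborhood of $\Pi$, and its directional derivative along directions tangent to $\Pi$ is given by the policy gradient theorem. So $\lambda\mapsto J_{c,P}(\pi_\lambda)$ is $C^1$ on $[0,1]$ with derivative $\langle\nabla J_{c,P}(\pi_\lambda),\pi-\pi'\rangle$. Cauchy--Schwarz and the gradient bound then give $\abs{J_{c,P}(\pi)-J_{c,P}(\pi')}\le\frac{\sqrt{\aA}}{(1-\gamma)^2}\norm{\pi-\pi'}_2$.

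Taking the supremum over $(c,P)$ completes the argument. I expect no real obstacle. The only subtle point is justifying the gradient formula on the segment. Concretely, the row sums of $\pi-\pi'$ are zero, so the inner product with $\nabla J$ is unaffected by which extension of $J_{c,P}$ off the simplex is used.
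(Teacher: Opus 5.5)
Your proposal is correct and follows essentially the same route as the paper, which bounds $\norm{\nabla J_{c,P}(\pi)}_2 \le \sqrt{\aA}/(1-\gamma)^2$ using $\abs{\qf{\pi}_{c,P}}\le (1-\gamma)^{-1}$ and $\sum_s \socc{\pi}_P(s)^2\le 1$, and then concludes Lipschitzness. You simply make explicit the two steps the paper compresses into ``bounded gradients imply Lipschitzness'': the mean-value argument along the segment inside $\Pi$, and the passage from the uniform per-model bound to the maximum over $\cU$.
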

\begin{proof}
The gradient norm of $J_{c, P}(\pi)$ can be upper bounded as  
\begin{align*}
  \norm*{\nabla J_{c, P}(\pi)}_2
   & = \sqrt{\sum_{s, a} \paren*{\frac{1}{1-\gamma} \socc{\pi}_{c, P}(s) \qf{\pi}_{c, P}(s, a)}^2}
    \leq \frac{1}{(1-\gamma)^2}\sqrt{\sum_{s, a} \paren*{\socc{\pi}_{c, P}(s)}^2}
    \leq \frac{\sqrt{\aA}}{(1-\gamma)^2}\;,
\end{align*}
where the last inequality uses $\sum_n p_n^2 \leq \left(\sum_n p_n\right)^2 = 1$ for any probability vector $p \in \Delta([N])$.
The claim then follows since bounded gradients imply Lipschitzness.
\end{proof}

\begin{lemma}[A version of Danskin's theorem]\label{lemma:danskin's theorem}
  \looseness=-1
  Let $\cY \subset \R^d$ be a compact set. Let $f: \R^n \times \cY \to \R$ be a bivariate jointly continuous function and $f(\cdot, y)$ is $L$-smooth for all $y \in \cY$. Let $F(x) \df \max_{y \in \cY} f(x, y)$.
  If $\nabla_{x} f(x, \cdot)$ is continuous on $\cY$ for all $x$, then the subgradient of $F$ at $x$ is given by
  $$
    \partial F(x)=\conv\brace*{\nabla_{x} f(x, y) \given y \in \argmax_{y' \in \cY} f(x, y')} \;.
  $$
\end{lemma}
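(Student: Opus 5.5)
The plan is to reduce the claim to the classical Danskin theorem for convex functions, using weak convexity. Since each $f(\cdot,y)$ is $L$-smooth, $f(\cdot,y)+\frac{L}{2}\norm{\cdot}_2^2$ is convex for every $y$. Hence
$$
G(x)\df F(x)+\frac{L}{2}\norm{x}_2^2=\max_{y\in\cY}\Big\{f(x,y)+\frac{L}{2}\norm{x}_2^2\Big\}
$$
is a finite convex function on $\R^n$. Two standard facts then do the bookkeeping. First, the Fr\'echet subdifferential satisfies the exact sum rule $\partial(F+h)(x)=\partial F(x)+\nabla h(x)$ when $h$ is $C^1$. Second, for a finite convex function the Fr\'echet subdifferential coincides with the convex-analysis subdifferential (\citealp{rockafellar2009variational}, Prop.~8.12). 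Together these give $\partial F(x)=\partial G(x)-Lx$. It therefore suffices to show
$$
\partial G(x)=\conv\{\nabla_x f(x,y)+Lx \given y\in Y(x)\},\qquad Y(x)\df\argmax_{y'\in\cY}f(x,y').
$$

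Next I would check that the right-hand side, call it $C$, is a nonempty compact convex set. $Y(x)$ is nonempty and compact because $f(x,\cdot)$ is continuous on the compact set $\cY$. The map $y\mapsto\nabla_x f(x,y)$ is continuous, so the image of $Y(x)$ is compact, and the convex hull of a compact set in $\R^n$ is compact by Carath\'eodory. For closed convex sets, equality follows from equality of support functions. Since the support function of $\partial G(x)$ is the directional derivative $G'(x;d)$, the goal becomes
$$
G'(x;d)=\max_{y\in Y(x)}\langle \nabla_x f(x,y)+Lx,d\rangle\quad\text{for all } d.
$$
Subtracting the common quadratic term, this is equivalent to $F'(x;d)=\max_{y\in Y(x)}\langle\nabla_x f(x,y),d\rangle$.

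The lower bound on $F'(x;d)$ is immediate. For any $y\in Y(x)$,
$$
F(x+td)-F(x)\ge f(x+td,y)-f(x,y),
$$
and dividing by $t$ and letting $t\downarrow 0$ gives $\langle\nabla_x f(x,y),d\rangle$.

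The upper bound is the main obstacle. Take $t_k\downarrow0$ realizing the $\limsup$ of the difference quotients, and choose $y_k\in Y(x+t_kd)$. By compactness, pass to a subsequence with $y_k\to y^*$. Joint continuity of $f$ lets us pass to the limit in $f(x+t_kd,y_k)\ge f(x+t_kd,y)$, which shows $y^*\in Y(x)$. $L$-smoothness then gives
$$
F(x+t_kd)-F(x)\le f(x+t_kd,y_k)-f(x,y_k)\le t_k\langle\nabla_x f(x,y_k),d\rangle+\tfrac{L}{2}t_k^2\norm{d}_2^2 .
$$
Dividing by $t_k$ and using continuity of $\nabla_x f(x,\cdot)$ at $y^*$ yields $\limsup\le\langle\nabla_x f(x,y^*),d\rangle\le\max_{y\in Y(x)}\langle\nabla_x f(x,y),d\rangle$. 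This completes the support-function identity, and the reduction above gives the claimed formula for $\partial F(x)$.
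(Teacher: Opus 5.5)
Your proof is correct and follows the paper's route. Both shift by $\frac{L}{2}\norm{x}_2^2$ to get a convex max-function and then transfer the subdifferential back through the sum rule for a $C^1$ perturbation. The differences are in what is cited versus derived: you re-derive the convex Danskin formula yourself (support functions plus the compactness argument for the directional derivative), where the paper simply cites Proposition~A.3.2 of \citet{bertsekas2009convex}. You also state explicitly that Fr\'echet and convex subdifferentials coincide for finite convex functions, which the paper uses without saying so.
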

\begin{proof}
  The original Danskin's theorem \citep[Proposition~A.3.2][]{bertsekas2009convex} states that if $f(\cdot, y)$ is convex for all $y \in \cY$, then the statement holds.

  \looseness=-1
  For an $L$-smooth function $f(\cdot, y)$, define $\widebar{f}(x, y) \df f(x, y) + \frac{L}{2}\norm*{x}^2_2$ and $\widebar{F}(x) \df \max_{y \in \cY} \widebar{f}(x, y)$. Since an $L$-smooth function is $L$-weakly convex (\citealp{atenas2023unified}, Proposition 2.4), $\widebar{f}(\cdot, y)$ is convex. Therefore, the original Danskin's theorem indicates that:
  \begin{align*}
     & \partial \widebar{F}(x) = \conv \brace*{\nabla_{x} f(x, y) + L x \given y \in \argmax_{y' \in \cM} \widebar{f}(x, y')} \\
    \implies
     & \partial F(x) = \conv \brace*{\nabla_{x} f(x, y) \given y \in \argmax_{y' \in \cM} f(x, y')} \;,
  \end{align*}
  where the second line uses $\partial \widebar{F}(\pi) = \partial F(\pi) + L x$ (e.g., \citealp{rockafellar2009variational}, Exercise 8.8) and $\argmax_{y' \in \cY} \widebar{f}(x, y') = \argmax_{y' \in \cY} f(x, y')$.
\end{proof}

\begin{lemma}\label{lemma:moreau-sugrad-bound}
  Let $f: \cX \to \R$ be an $\omega$-weakly convex function where $\cX$ is a convex compact set. Define $\widebar{f}: x \in \R^d \mapsto f(x) + \delta_{\cX}(x)$.
  Let $x_\nu \df \prox_{\nu \widebar{f}}(x)$ be the shorthand for the proximal operator.
  For each $0 < \nu < 1/\omega$, there exists a subgradient $g \in \partial f(x_\nu)$ such that, for any $y \in \cX$,
  $$
    \left\langle g, x_\nu - y\right\rangle \leq \left\langle \nabla (\ME_\nu \comp \widebar{f})(x) , x_\nu - y \right \rangle\;,
  $$
  where we write $\langle x, y \rangle \df \sum_{i=1}^d x_i y_i$ for two vectors $x, y \in \R^d$.
\end{lemma}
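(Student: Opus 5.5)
We prove \cref{lemma:moreau-sugrad-bound} from the first-order optimality condition of the proximal subproblem. Fix $x$ and $0<\nu<1/\omega$. The proximal objective is
$$
h(y)\df f(y)+\delta_{\cX}(y)+\frac{1}{2\nu}\norm{x-y}_2^2 .
$$
Since $f+\frac{\omega}{2}\norm{\cdot}_2^2$ is convex and $\frac{1}{2\nu}-\frac{\omega}{2}>0$, $h$ is strongly convex on the convex compact set $\cX$. Hence $x_\nu$ is its unique minimizer, and \cref{eq:Moreau-grad} gives $\nabla(\ME_\nu\comp\widebar f)(x)=\frac{1}{\nu}(x-x_\nu)$.

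\textbf{Step 1: optimality condition.} Fermat's rule gives $\bzero\in\partial h(x_\nu)$. Write $h=\bigl(f+\frac{\omega}{2}\norm{\cdot}_2^2\bigr)+\delta_{\cX}+\bigl(\frac{1}{2\nu}\norm{x-\cdot}_2^2-\frac{\omega}{2}\norm{\cdot}_2^2\bigr)$. The first two terms are convex and the last is smooth, so the convex sum rule applies. Combined with $\partial(\varphi+\text{smooth})=\partial\varphi+\nabla(\text{smooth})$ (Exercise 8.8 of \citealp{rockafellar2009variational}), this yields
$$
\bzero\in \partial f(x_\nu)+N_{\cX}(x_\nu)+\tfrac{1}{\nu}(x_\nu-x).
$$
Here $\partial f(x_\nu)$ is understood, as in the paper, as the subdifferential of $f$ viewed as a weakly convex function, e.g.\ the max of smooth functions as in \cref{lemma:danskin's theorem}. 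The main technical point is applying the sum rule so that the subgradient lands in $\partial f$ and not in $\partial(f+\delta_\cX)$. For this we take $f$ to be defined, and weakly convex, on a neighborhood of $\cX$, which holds for $J_\cU$ by \cref{lemma:policy subgradient}. Then $\partial(f+\delta_\cX)(x_\nu)=\partial f(x_\nu)+N_\cX(x_\nu)$ by the convex sum rule applied to $f+\frac{\omega}{2}\norm{\cdot}_2^2$; the relative interior qualification holds since $f$ is finite everywhere.

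\textbf{Step 2: extract the inequality.} Step 1 provides $g\in\partial f(x_\nu)$ and $n\in N_\cX(x_\nu)$ with
$$
g = \tfrac{1}{\nu}(x-x_\nu) - n = \nabla(\ME_\nu\comp\widebar f)(x) - n .
$$
For any $y\in\cX$, the normal cone definition gives $\langle n, y-x_\nu\rangle\le 0$, i.e.\ $\langle n, x_\nu-y\rangle\ge 0$. Therefore
$$
\langle g,x_\nu-y\rangle=\langle\nabla(\ME_\nu\comp\widebar f)(x),x_\nu-y\rangle-\langle n,x_\nu-y\rangle\le\langle\nabla(\ME_\nu\comp\widebar f)(x),x_\nu-y\rangle,
$$
which is the claimed bound. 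The same $g$ works for every $y$, since it depends only on $x_\nu$.
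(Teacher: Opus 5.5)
Your proof is correct and follows essentially the same route as the paper: from the optimality of the proximal subproblem you get $0\in\partial f(x_\nu)+N_{\cX}(x_\nu)+\frac{1}{\nu}(x_\nu-x)$, and the normal-cone inequality $\langle n, y-x_\nu\rangle\le 0$ then gives the bound for every $y\in\cX$. You are also more careful than the paper about the splitting $\partial(f+\delta_{\cX})(x_\nu)=\partial f(x_\nu)+N_{\cX}(x_\nu)$. You justify it with the convex sum rule applied to $f+\frac{\omega}{2}\|\cdot\|_2^2$, taking $f$ finite and weakly convex near $\cX$, whereas the paper uses this step only implicitly.
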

\begin{proof}
  The Moreau envelope $\ME_\nu \comp \widebar{f}$ satisfies that, for any $x \in \R^d$,
  $$
    \paren*{\ME_\nu \comp \widebar{f}}(x)
    = \min_{x' \in \R^d} \brace*{f(x') + \delta_{\cX}(x') + \frac{1}{2\nu} \norm*{x - x'}^2_2}
    = \min_{x' \in \cX} \brace*{f(x') +\frac{1}{2\nu} \norm*{x - x'}^2_2}\;.
  $$
  It holds that $\nabla \paren*{\ME_\nu \comp \widebar{f}}(x) = \frac{1}{\nu}(x - x_\nu)$ due to \cref{eq:Moreau-grad}.
  Note that $x_\nu$ is a minimizer of a mapping $x' \mapsto f(x') + \delta_{\cX}(x') + \frac{1}{2\nu} \norm*{x-x'}^2_2$.
  Therefore,
  $$
    -\frac{1}{\nu}\paren*{x_\nu - x}
    \in \left. \partial\paren*{f(y)+\delta_{\cX}(y)}\right|_{y=x_\nu}\;.
  $$

  \looseness=-1
  It is well-known that $\partial \delta_\cX(x) = N_{\cX}(x)$, where $N_{\cX}(x)$ is the normal cone of $\cX$ at $x$ satisfying
  $$N_{\cX}(x) \df \brace*{g \in \R^d \given \langle g, y\rangle \leq \langle g, x\rangle \quad \forall y \in \cX}\;.$$
  Therefore, there exists a subgradient $g \in \partial f(x_\nu)$ such that
  $$
    -g - \frac{1}{\nu}(x_\nu - x) \in N_\cX(x_\nu)\;.
  $$
  Since any $z \in N_\cX(x_\nu)$ satisfies $\langle z, y - x_\nu \rangle \leq 0$ for any $y \in \cX$, it holds that
  $$
    \left\langle-g, y-x_\nu\right\rangle \leq\left\langle\frac{1}{\nu}\left(x_\nu - x\right), y-x_\nu\right\rangle, \quad \forall y \in \cX\;.
  $$
  Rearranging the above inequality with \cref{eq:Moreau-grad} completes the proof.
\end{proof}

\section{Technical Errors in Prior Work}\label{sec:proof-error-in-prior-work}

\looseness=-1
This section explains the technical errors in the subgradient-dominance proofs of \citet{wang2023policy} and \citet{kitamura2025near}. 

\subsection{A technical error in \texorpdfstring{\citet{wang2023policy}}{Wang et al.~(2023)}}

\looseness=-1
We first note that \citet{wang2023policy} consider only transition uncertainty. In addition, their Assumption 2.1 requires the cost function to be nonnegative.

\looseness=-1
A key step in their analysis is proving their Eq.\ (33), which states that, for some constant \(D>0\),
\begin{equation}\label{eq:main-goal-in-wang2023}
  J_{\cP}(\pi) - \min_{\pi'\in\Pi} J_{\cP}(\pi') \le D \norm*{\nabla (\ME_{\frac{1}{2\ell}} \comp \barJ_{\cP})(\pi)}_2\;.
\end{equation}
\looseness=-1
This corresponds to our \cref{eq:subgrad-dom-Moreau}, which ensures that any stationary point at which PSD stops is globally optimal.

\looseness=-1
Let $\tilde{\pi}=\arg \min _{\pi' \in \Pi}\left\{J_{\cP}(\pi')+\ell\|\pi-\pi'\|_2^2\right\}$ be the proximal point of $\pi$. 
\citet{wang2023policy} uses their Lemma D.6 to ensure that there exists $\xi \in \partial J_{\cP}(\tilde{\pi})$ such that 
$-\xi \subseteq \cN_{\Pi}(\tilde{\pi})+2 \ell\|\tilde{\pi}-\pi\| \cdot \mathbb{B}_2$, where $\cN_{\Pi}(\tilde{\pi})$ is the normal cone of $\Pi$ at $\tilde{\pi}$ and $\mathbb{B}_2$ is the unit ball in an appropriate dimension.
Then, using the non-negativity of the cost function, Eq.\ (30)-(33) in their paper attempt to derive \cref{eq:main-goal-in-wang2023} through the following inequality: 
\begin{equation}\label{eq:error-dominant-wang}
0 \leq
J_{\cP}(\tilde{{\pi}})-J_{\cP}\left({\pi}^{\star}\right) 
\leq \frac{D}{1-\gamma}\underbrace{\langle\bone, -\xi \rangle}_{ \leq 0 }\leq \frac{D \sqrt{S A}}{1-\gamma}\left\|\nabla (\ME_{\frac{1}{2\ell}} \comp \barJ_{\cP})(\pi)\right\|_{2} .
\end{equation}
However, this argument is incorrect.
Because the cost is nonnegative, every subgradient of $J_{\cP}(\cdot)$ is also non-negative by \cref{eq:J-subgradient} and \cref{eq:J-gradient}, and hence $\xi$ is a non-negative vector. As a result, the middle term is non-positive, whereas the left-hand side is non-negative.
Therefore, the inequality can hold only when $\tilde{\pi}$ is already globally optimal but does not hold in general.

\subsection{A technical error in \texorpdfstring{\citet{kitamura2025near}}{Kitamura et al.~(2025)}}

\looseness=-1
\citet{kitamura2025near} attempt to prove the subgradient dominance property, i.e., \cref{eq:strong-subgrad-dom}. 
Let $\cG \df \brace*{\nabla J_{c, P}(\pi) \given (c, P) \in \cU^\pi}$ be the set of all possible subgradients of $J_{\cU}(\cdot)$ at $\pi$. In their Eq.\ (36), using the gradient dominance lemma for MDPs (\cref{lemma:gradient dominance}), they first show that
$$
J_{\cU}(\pi) - \min_{\pi'\in\Pi} J_{\cU}(\pi') \leq \frac{D}{1-\gamma} \min_{g \in \cG} \max_{\pi'\in\Pi} \sum_{s, a} (\pi(a\mid s) - \pi'(a\mid s)) g(s, a)\;.
$$
They then attempt, in their Eq.\ (37)--(39), to prove that
$$
\min_{g \in \cG} \max_{\pi'\in\Pi} \sum_{s, a} (\pi(a\mid s) - \pi'(a\mid s)) g(s, a)
= \min_{g \in \conv\cG} \max_{\pi'\in\Pi} \sum_{s, a} (\pi(a\mid s) - \pi'(a\mid s)) g(s, a)\;.
$$
using Sion's minimax theorem \citep{sion1958general} together with the convexity of \(\Pi\). However, this equality is incorrect.

\looseness=-1
More generally, their argument requires that, for two compact sets \(\cX, \cY \subseteq \mathbb{R}^d\) with \(\cX\) convex,
$$
\min _{y \in \mathcal{Y}} \max _{x \in \mathcal{X}}\langle x, y\rangle=\min _{y \in \conv(\mathcal{Y})} \max _{x \in \mathcal{X}}\langle x, y\rangle
$$
holds. However, this is false in general. For example, let $\mathcal{Y}=\{-1,1\}$ and $\mathcal{X}=[-1,1]$. Then, 
$$
1=\min _{y \in \brace{-1,1}} \max _{x \in [-1, 1]}xy\min _{y \in \conv(\brace{-1,1})} \max _{x \in [-1, 1]}xy= 0\;,
$$
so the claimed equality does not hold.

\section{Convergence Analysis of Projected Subgradient Descent Method}\label{sec:subgrad-dom-Moreau-derivation}

\subsection{Proof of \texorpdfstring{\cref{lemma:stationary-convergence}}{Lemma 4}}

We utilize the existing convergence analysis of projected subgradient descent (PSD) for weakly convex functions.

\begin{lemma}[{\citealp{davis2019stochastic}, Theorem 3.1}]\label{lemma:PSD-convergence-weakly-convex}
  Let $f: \cX \to \R$ be an $\omega$-weakly convex function where $\cX$ is a convex compact set. Let $\widebar{f}: x \in \R^d \mapsto f(x) + \delta_{\cX}(x)$.
  Assume that there exists a constant $L$ such that $\norm{g}_2 \leq L$ for all $x \in \cX$ and $g \in \partial f(x)$.
  Consider the update:
  $$
    x_{t+1} = \proj_{\cX}\paren*{x_t - \eta g_t}\quad \text{where } g_t \in \partial f(x_t)\;.
  $$
  Set the step size as $\eta_t = \frac{1}{\omega \sqrt{T}}$ for all $t$.
  Then, it holds that
  $$
    \min_{t \in [T]} \norm*{\nabla (\ME_{\frac{1}{2\omega}} \comp \widebar{f})(x_t)}_2^2 \leq \frac{4 f_{\max} + 2\omega L^2}{\sqrt{T}}\;.
  $$
\end{lemma}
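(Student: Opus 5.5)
The plan is the standard potential-function argument of \citet{davis2019stochastic}: use the Moreau envelope $\ME_\nu \comp \widebar{f}$ with $\nu \df \frac{1}{2\omega}$ as a Lyapunov function and show that it decreases in expectation-free form along the iterates, up to an $\eta^2$ error. Write $\hat x_t \df \prox_{\nu \widebar{f}}(x_t)$. Since $\nu < 1/\omega$, the map $y \mapsto \widebar{f}(y) + \frac{1}{2\nu}\norm{x_t - y}_2^2$ is $(\frac1\nu - \omega) = \omega$-strongly convex. Hence $\hat x_t$ is well defined and lies in $\cX$. By \cref{eq:Moreau-grad}, $\nabla(\ME_\nu \comp \widebar{f})(x_t) = \frac1\nu(x_t - \hat x_t)$.

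\textbf{Step 1: one-step descent of the envelope.} Because $\hat x_t$ is feasible in the minimization defining the envelope at $x_{t+1}$,
\begin{equation*}
\ME_\nu \comp \widebar{f}(x_{t+1}) \le f(\hat x_t) + \tfrac{1}{2\nu}\norm{x_{t+1}-\hat x_t}_2^2 .
\end{equation*}
Since $\hat x_t \in \cX$ and $\proj_\cX$ is nonexpansive,
\begin{equation*}
\norm{x_{t+1}-\hat x_t}_2^2 \le \norm{x_t - \hat x_t}_2^2 - 2\eta\langle g_t, x_t-\hat x_t\rangle + \eta^2 L^2 .
\end{equation*}
Substituting, and recognizing $f(\hat x_t)+\frac1{2\nu}\norm{x_t-\hat x_t}_2^2 = \ME_\nu\comp\widebar f(x_t)$, gives
\begin{equation*}
\ME_\nu\comp\widebar f(x_{t+1}) \le \ME_\nu\comp\widebar f(x_t) - \tfrac{\eta}{\nu}\langle g_t, x_t-\hat x_t\rangle + \tfrac{\eta^2L^2}{2\nu}.
\end{equation*}

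\textbf{Step 2: lower-bounding the inner product (the key step).} Weak convexity gives the subgradient inequality
\begin{equation*}
\langle g_t, x_t-\hat x_t\rangle \ge f(x_t)-f(\hat x_t) - \tfrac{\omega}{2}\norm{x_t-\hat x_t}_2^2 .
\end{equation*}
Next, compare the strongly convex prox objective at $y = x_t$ and at its minimizer $y = \hat x_t$. This yields
\begin{equation*}
f(x_t) \ge f(\hat x_t) + \tfrac{1}{2\nu}\norm{x_t-\hat x_t}_2^2 + \tfrac{\omega}{2}\norm{x_t-\hat x_t}_2^2 = f(\hat x_t) + \tfrac{3\omega}{2}\norm{x_t-\hat x_t}_2^2 .
\end{equation*}
Combining the two bounds,
\begin{equation*}
\langle g_t, x_t-\hat x_t\rangle \ge \omega\norm{x_t-\hat x_t}_2^2 = \omega\nu^2\norm{\nabla(\ME_\nu\comp\widebar f)(x_t)}_2^2 .
\end{equation*}
With $\nu = \frac{1}{2\omega}$, Step 1 then becomes
\begin{equation*}
\ME_\nu\comp\widebar f(x_{t+1}) \le \ME_\nu\comp\widebar f(x_t) - \tfrac{\eta}{2}\norm{\nabla(\ME_\nu\comp\widebar f)(x_t)}_2^2 + \eta^2\omega L^2 .
\end{equation*}
This is where I expect the main care to be needed. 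One has to keep the strong-convexity modulus of the prox subproblem exactly $\omega$, which is why $\nu$ is fixed at $1/(2\omega)$. One also has to make sure that the subgradient $g_t$ used here is a Fréchet subgradient of $f$, not of $\widebar f$, so that the weak-convexity inequality applies between the two points $x_t, \hat x_t \in \cX$.

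\textbf{Step 3: telescoping and constants.} Summing over $t\in[T]$ and bounding the minimum by the average gives
\begin{equation*}
\min_{t\in[T]}\norm{\nabla(\ME_\nu\comp\widebar f)(x_t)}_2^2 \le \frac{2\bigl(\ME_\nu\comp\widebar f(x_1) - \inf \ME_\nu\comp\widebar f\bigr)}{\eta T} + 2\eta\omega L^2 .
\end{equation*}
The envelope gap is bounded using $\min_\cX f \le \ME_\nu\comp\widebar f \le f$ on $\cX$: it is at most the range of $f$, which is $\le 2 f_{\max}$ when $f_{\max}$ bounds $|f|$ on $\cX$. Plugging in $\eta = \frac{1}{\omega\sqrt T}$ yields an $O(T^{-1/2})$ bound of the stated form.

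The exact placement of the factor $\omega$ in the constant depends on how $f_{\max}$ is normalized: it should be read as the $\omega$-scaled range of $f$, as in \citet{davis2019stochastic}. I would fix this convention when matching constants with the statement. This bookkeeping is the only other point I expect to need checking.
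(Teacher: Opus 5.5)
Your Steps 1--3 are correct, and they are exactly the Davis--Drusvyatskiy argument the paper relies on; the paper gives no proof of its own beyond the citation. The argument uses the envelope as a potential and nonexpansiveness of the projection. Its key step combines weak convexity with $\omega$-strong convexity of the prox subproblem to get $\langle g_t, x_t-\hat x_t\rangle \ge \omega\norm{x_t-\hat x_t}_2^2$. Together these give
\[
\min_{t\in[T]}\norm{\nabla(\ME_{1/(2\omega)}\comp\widebar{f})(x_t)}_2^2 \le \frac{2\Delta}{\eta T} + 2\eta\omega L^2 ,
\]
where $\Delta$ is the envelope gap.

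The gap is in your last paragraph. With $\eta = 1/(\omega\sqrt{T})$ this becomes $(2\omega\Delta + 2L^2)/\sqrt{T}$, and the statement as literally written does not follow. It is in fact false. Take $f(x)=|x|$ on $\cX=[-10,10]$, which is $16$-weakly convex with $L=1$ and $f_{\max}=10$, and set $x_1=10$, $T=10^4$, so $\eta = 1/1600$. Every iterate satisfies $x_t \ge 10 - 9999/1600 > 3.75 > 1/32$, where $\nabla(\ME_{1/32}\comp\widebar{f})(x_t)=1$. The claimed bound, however, is $(40+32)/100 = 0.72$.

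Reinterpreting $f_{\max}$ does not repair this. Even if $4f_{\max}\ge 2\omega\Delta$, you would still need $2L^2 \le 2\omega L^2$, which fails for $\omega<1$. The paper also instantiates $f_{\max}=1/(1-\gamma)$, an unscaled bound on $J$. Davis--Drusvyatskiy use no such convention either: their constant is the envelope gap, exactly as in your Step 3.

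The right diagnosis is a typo in the step size. Plug $\eta = 1/\sqrt{T}$, which is what \cref{lemma:stationary-convergence} actually uses, into your Step 3. This gives $(2\Delta + 2\omega L^2)/\sqrt{T} \le (4f_{\max} + 2\omega L^2)/\sqrt{T}$, using $\Delta \le 2f_{\max}$ for $f_{\max}\ge\sup_{\cX}|f|$, which is precisely the stated constant. Your argument therefore proves the lemma once the step size is corrected to $1/\sqrt{T}$, or alternatively once the constant is corrected to $(2\omega\Delta+2L^2)/\sqrt{T}$. You should make that correction explicitly rather than redefine $f_{\max}$.
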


By substituting $f = J_{\cU}$, $\cX = \Pi$, $\omega = \ell=\frac{2\gamma\aA}{(1-\gamma)^3}$, $L=\frac{\sqrt{\aA}}{(1-\gamma)^2}$, and $f_{\max} = \frac{1}{1-\gamma}$ into \cref{lemma:PSD-convergence-weakly-convex}, we obtain \cref{lemma:stationary-convergence}.

\subsection{Derivation of \texorpdfstring{\cref{eq:subgrad-dom-Moreau}}{Eq.~(13)}}

\looseness=-1
Here we provide a derivation of \cref{eq:subgrad-dom-Moreau} from the subgradient dominance property \cref{eq:strong-subgrad-dom}.
We use a shorthand $\pi_{\frac{1}{2\ell}} \df \prox_{\frac{1}{2\ell} \barJ_{\cU}}(\pi)$.
The right-hand side of \cref{eq:strong-subgrad-dom} can be upper bounded as
\begin{align*}
  J_{\cU}(\pi_{\frac{1}{2\ell}}) - \min_{\pi'\in\Pi} J_{\cU}(\pi')
   & \le D \min_{g \in \partial J_{\cU}(\pi)} \max_{\pi'\in \Pi} \sum_{s, a} \paren*{\pi_{\frac{1}{2\ell}}(a\mid s) - \pi'(a\mid s)} g(s, a)                                                                                        \\
   & \numeq{\le}{a} D \max_{\pi'\in \Pi} \sum_{s, a} \paren*{\pi_{\frac{1}{2\ell}}(a\mid s) - \pi'(a\mid s)} \widebar{g}(s, a) \quad \text{where } \widebar{g} = \nabla \paren*{\ME_{\frac{1}{2\ell}}\comp \barJ_{\cU}}(\pi) \\
   & \numeq{\leq}{b} D \max_{\pi' \in \Pi} \norm*{\pi_{\frac{1}{2\ell}} - \pi'}_2 \norm*{\widebar{g}}_2
  \numeq{\leq}{c} 2D\sqrt{\aS} \norm*{\widebar{g}}_2 \;.
\end{align*}
Here, (a) follows from \cref{lemma:moreau-sugrad-bound}, (b) follows from the Cauchy-Schwarz inequality, and (c) uses $\norm*{\pi' - \pi}_2 \leq 2\sqrt{\aS}$ for any $\pi', \pi \in \Pi$.

\looseness=-1
Additionally, using the Lipschitzness of $J_{\cU}(\cdot)$ (\cref{lemma:RMDP-lipschitz}), we have
\begin{align*}
  J_{\cU}(\pi) - J_{\cU}(\pi_{\frac{1}{2\ell}}) \leq \frac{\sqrt{\aA}}{(1-\gamma)^2} \norm*{\pi - \pi_{\frac{1}{2\ell}}}_2 = \frac{\sqrt{\aA}}{(1-\gamma)^2} 2\ell \norm*{\widebar{g}}_2\;,
\end{align*}
where the equality follows from \cref{eq:Moreau-grad}. By combining the above two inequalities, we obtain
\begin{align*}
  J_{\cU}(\pi) - \min_{\pi'\in\Pi} J_{\cU}(\pi')
  \le J_{\cU}(\pi) - J_{\cU}(\pi_{\frac{1}{2\ell}}) + J_{\cU}(\pi_{\frac{1}{2\ell}}) - \min_{\pi'\in\Pi} J_{\cU}(\pi')
  \le \left(2D\sqrt{\aS} + \frac{2\ell \sqrt{\aA}}{(1-\gamma)^2}\right) \norm*{\widebar{g}}_2\;.
\end{align*}
By placing $D' \df 2D\sqrt{\aS} + \frac{2\ell \sqrt{\aA}}{(1-\gamma)^2}$, we have \cref{eq:subgrad-dom-Moreau}.

\section{Proof of PSD Failure (\texorpdfstring{\cref{proposition:psd-trapped-near-pi2}}{Proposition~8})}\label{sec:proof-of-psd-trap}

  \begin{proposition}[Restatement of \cref{proposition:psd-trapped-near-pi2}]
    Recall the RMDP instance defined in \cref{example:RMDP-failure} with the transition uncertainty set $\cP = \{P_1, P_2\}$.
    The following two claims hold:
    \begin{itemize}
      \item The policy ${\color{red}\tilde{\pi}_2}$ that always chooses action ${\color{red}a_2}$ is a suboptimal strict local minimum of $J_{\cP}(\cdot)$ satisfying $0.505=J_{\cP}({\color{red}\tilde{\pi}_2}) > J_{\cP}({\color{blue}\tilde{\pi}_1})=0.1$.
      \item For all sufficiently small $\eta>0$, when initialized at ${\color{red}\tilde{\pi}_2}$, the PSD update 
      $\pi_{t+1} = \proj_{\Pi}\paren*{\pi_t - \eta g_t}$ with any $g_t \in \partial J_{\cP}(\pi_t)$ remains trapped in a neighborhood of ${\color{red}\tilde{\pi}_2}$ such that
    \[
    \|\pi_t - {\color{red}\tilde{\pi}_2}\|_\infty \leq 3\eta \qquad \forall t\ge 0\;.
    \]
    \end{itemize}
\end{proposition}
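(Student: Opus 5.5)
The plan is to prove both claims by explicit computation on the three-state deterministic instance, exploiting the fact that at ${\color{red}\tilde{\pi}_2}$ both kernels are active.

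\textbf{Step 1: values and gradients at the vertices.} First I write the deterministic successor of each $(s,a)$ under $P_1$ and $P_2$ as read off \cref{fig:nonrect}. I then solve the Bellman equations \eqref{eq:Bellman-equation} for $V^{\pi}_{P_i}$ and $Q^{\pi}_{P_i}$ at ${\color{red}\tilde{\pi}_2}$ and ${\color{blue}\tilde{\pi}_1}$, with $\gamma=0.9$ and $\mu=(0.45,0.45,0.1)$. From these I read off the value claims $J_{\cP}({\color{red}\tilde{\pi}_2})=0.505$ and $J_{\cP}({\color{blue}\tilde{\pi}_1})=0.1$. The structural fact I expect, and will check, is a tie: $J_{P_1}({\color{red}\tilde{\pi}_2})=J_{P_2}({\color{red}\tilde{\pi}_2})$, so that $\cU^{{\color{red}\tilde{\pi}_2}}=\{P_1,P_2\}$.

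Next I compute the two policy gradients $\nabla J_{P_i}({\color{red}\tilde{\pi}_2})$ via \cref{lemma:policy subgradient}. A feasible direction at ${\color{red}\tilde{\pi}_2}$ is parametrized by $\delta_s=\pi(a_1|s)\ge 0$ for $s\in\{s_1,s_2,s_+\}$. Along such a direction the directional derivative of $J_{P_i}$ equals
\[
-\tfrac{1}{1-\gamma}\sum_s d^{{\color{red}\tilde{\pi}_2}}_{P_i}(s)\,A^{{\color{red}\tilde{\pi}_2}}_{P_i}(s,a_1)\,\delta_s .
\]
The conflict I expect is the following: under $P_1$, switching at one state (say $s_1$) is strictly harmful while switching at $s_2$ helps, and under $P_2$ the roles are reversed. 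Writing $\ell_i(\delta)$ for the linear term of model $i$, the goal is
\[
\max_i \ell_i(\delta)\ \ge\ \kappa\,\|\delta\|_1 \quad\text{for some explicit } \kappa>0 \text{ and all } \delta\ge 0 .
\]
States where $\pi$ is irrelevant, or where both models penalize switching, only help this bound. Because $\delta\ge 0$, the bound reduces to checking the coefficient vectors: for each coordinate, at least one model has a positive coefficient, and the positive parts dominate the negative parts in a weighted combination. Equivalently, I exhibit $\lambda\in\Delta(\{1,2\})$ with $\lambda_1\ell_1+\lambda_2\ell_2\ge\kappa\|\delta\|_1$ coordinatewise.

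\textbf{Step 2: strict local minimality.} By \cref{lemma:MDP-smooth},
\[
J_{\cP}(\pi)\ \ge\ \max_i J_{P_i}(\pi)\ \ge\ J_{\cP}({\color{red}\tilde{\pi}_2})+\max_i \ell_i(\delta)-\tfrac{\ell}{2}\|\pi-{\color{red}\tilde{\pi}_2}\|_2^2 .
\]
Since $\|\pi-{\color{red}\tilde{\pi}_2}\|_2\le\sqrt2\,\|\delta\|_1$, this gives $J_{\cP}(\pi)>J_{\cP}({\color{red}\tilde{\pi}_2})$ whenever $0<\|\delta\|_1<\kappa/\ell$. Combined with $0.505>0.1$, this proves the first claim.

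\textbf{Step 3: trapping.} I will show by induction the invariant $\delta_s(t)\le 3\eta$ for every state $s$. In a small neighborhood, continuity of the $Q$-functions and occupancies makes each gradient coordinate a perturbation of its value at ${\color{red}\tilde{\pi}_2}$. The active set is $\{P_i : J_{P_i}(\pi_t)=\max_j J_{P_j}(\pi_t)\}$, and $g_t$ is a convex combination of the active gradients.

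The key observation concerns a state $s$ where the active model (or combination) pushes mass toward $a_1$. There the push has size at most $\eta L$ per step, with $L$ bounded via \cref{lemma:RMDP-lipschitz}. Such a push can only occur while the other model is inactive. By the tie structure, that other model becomes active once the deviation in its harmful coordinate exceeds about $c\,\eta$. Meanwhile the model that penalizes $\delta_s$ pushes that coordinate back, and the Euclidean projection onto the simplex at $s$ clips $\pi(a_1|s)$ at $0$.

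To make this concrete, I will track the gap $J_{P_1}(\pi_t)-J_{P_2}(\pi_t)$, which is approximately linear in $\delta$. I will show that each coordinate $\delta_s$ can grow only in a step taken while it is below roughly $2\eta$. A single step adds at most about $\eta$ to it, which yields the bound $3\eta$ for sufficiently small $\eta$.

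\textbf{Main obstacle.} I expect the main difficulty to be Step 3: handling the switching of the active set and the projection geometry rigorously, with explicit constants. My first attempt will reduce the problem to a two-dimensional piecewise-linear dynamical system in $(\delta_{s_1},\delta_{s_2})$, with $\delta_{s_+}$ treated separately since it should be pushed back by both models. I will absorb the $O(\eta^2)$ smoothness errors into the slack between $2\eta$ and $3\eta$.
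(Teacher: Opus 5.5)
Your Steps 1--2 are sound and are essentially the paper's argument. In coordinates $x=\pi(a_1|s_1)$, $y=\pi(a_1|s_2)$, model $P_1$ has slopes $(f_1,f_2)$ at $\tilde\pi_2$ and $P_2$ has slopes $(f_2,f_1)$, where $f_1=-d_1\gamma(1-\gamma)<0<f_2=d_2\gamma^2$. Averaging with weights $(1/2,1/2)$ gives $\kappa=(f_1+f_2)/2>0$, and \cref{lemma:MDP-smooth} just makes the paper's $o(x+y)$ remainder explicit. One correction: a state at which $\pi$ does not affect $J$ contributes zero to both $\ell_i$ but a positive amount to $\|\delta\|_1$. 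Such a state therefore breaks your bound rather than helping it. You should work only with $(x,y)$, as the paper does; PSD then leaves such a state fixed rather than ``pushing it back''.

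The genuine gap is in Step 3.

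\emph{The drift bound.} Bounding the drift via \cref{lemma:RMDP-lipschitz} is far too weak: $L=\sqrt{2}/(1-\gamma)^2\approx 141$ at $\gamma=0.9$, so $\eta L$ cannot give a radius of $3\eta$. You need the local slopes instead. Near $\tilde\pi_2$ the update is $x_{t+1}=\mathrm{clip}(x_t-\frac{\eta}{2}F_1(g_t))$, where $F_1(g)=g(s_1,a_1)-g(s_1,a_2)$ lies within $\varepsilon$ of a convex combination of $f_1,f_2$. Hence the push toward $a_1$ is at most $\frac{\eta}{2}(-f_1+\varepsilon)$.

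\emph{The invariant.} More seriously, the invariant ``every coordinate is at most $3\eta$'' is not inductive. Take a point with $y<x\le 3\eta$ and $y$ sufficiently close to $3\eta$. There $P_2$ is the unique active model, and its step lifts $y$ above $3\eta$. Your claim that a coordinate can grow only while it is below $2\eta$ is exactly what needs proof, and the switching mechanism you describe does not supply it, for two reasons. First, to first order $J_{P_1}-J_{P_2}\approx (f_2-f_1)(y-x)$, so the active model is decided by the sign of $y-x$, not by one coordinate crossing a $c\eta$ threshold. Second, at ties a push toward $a_1$ can occur with both models active: at $\tilde\pi_2$ itself, $g=\nabla J_{P_2}(\tilde\pi_2)$ raises $y$.

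\emph{The missing idea.} One step of the unique active model overshoots the current deviation, so the projection clips that coordinate to exactly zero. Suppose $y_t=0<x_t\le \frac{\eta}{2}(-f_1+\varepsilon)$. Then $P_2$ is the unique worst case and $F_1(g_t)\ge f_2-\varepsilon$. Choosing $\varepsilon=(f_1+f_2)/2$ gives $x_t-\frac{\eta}{2}F_1(g_t)\le \frac{\eta}{2}(2\varepsilon-f_1-f_2)=0$, so $x_{t+1}=0$, while $y_{t+1}\le \frac{\eta}{2}(-f_1+\varepsilon)$. This yields the closed invariant ``one of $x_t,y_t$ is $0$ and the other is at most $\frac{\eta}{4}(f_2-f_1)\le 2.25\eta$''. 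The tie at the origin is handled separately, using $F_1(g)+F_2(g)=f_1+f_2>0$ for every $g\in\partial J_{\mathcal{P}}(\tilde\pi_2)$.

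So the inequality $f_1+f_2>0$ must enter the dynamics, not only the local-minimality bound. Without it your Step 3 does not close.
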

\begin{proof} 
  \looseness=-1 Throughout the proof, we use shorthands $\xi_1 \df \nabla J_{P_1}({\color{red}\tilde{\pi}_2})$ and $\xi_2 \df \nabla J_{P_2}({\color{red}\tilde{\pi}_2})$. 
  Using the symmetry that $P_2$ matches $P_1$ by swapping the states $s_1$ and $s_2$, it is straightforward to verify that
  \begin{equation}\label{eq:xi-definition}
  \begin{aligned}
    &\xi_1(s_1, {\color{blue}a_1}) = d_1 \gamma^2, \; \xi_1(s_1, {\color{red}a_2}) = d_1 \gamma, \quad \xi_1(s_2, {\color{blue}a_1}) = d_2 \gamma^2, \; \xi_1(s_2, {\color{red}a_2}) = 0\\
    & \xi_2(s_1, {\color{blue}a_1}) = d_2 \gamma^2, \; \xi_2(s_1, {\color{red}a_2}) = 0, \quad \quad \xi_2(s_2, {\color{blue}a_1}) = d_1 \gamma^2, \; \xi_2(s_2, {\color{red}a_2}) = d_1 \gamma,
  \end{aligned}
  \end{equation}
  where we defined $d_1 \df \frac{1}{1-\gamma}\socc{{\color{red}\tilde{\pi}_2}}_{P_1}(s_1) = \frac{1}{1-\gamma}\socc{{\color{red}\tilde{\pi}_2}}_{P_2}(s_2) < \frac{1}{1-\gamma}\socc{{\color{red}\tilde{\pi}_2}}_{P_1}(s_2) = \frac{1}{1-\gamma}\socc{{\color{red}\tilde{\pi}_2}}_{P_2}(s_1) \fd d_2$.

  \looseness=-1
  \paragraph{Strict local minimality.} 
  The suboptimality $0.505=J_{\cP}({\color{red}\tilde{\pi}_2}) > J_{\cP}({\color{blue}\tilde{\pi}_1})=0.1$ can be verified by direct calculations. We show the strict local minimality of ${\color{red}\tilde{\pi}_2}$.

  \looseness=-1
  Let $\pi_{x, y}({\color{blue}a_1}\mid s_1)=x$ and $\pi_{x,y}({\color{blue}a_1}\mid s_2)=y$ be a policy that perturbs ${\color{red}\tilde{\pi}_2}$ by $x$ and $y$ at states $s_1$ and $s_2$, respectively. The first-order expansion of $J_{P_1}$ and $J_{P_2}$ around ${\color{red}\tilde{\pi}_2}$ yields
  \begin{align*}
  &J_{P_1}(\pi_{x,y}) = J_{P_1}({\color{red}\tilde{\pi}_2}) + f_1 x + f_2 y + o(x+y) \;\text{ and }\; J_{P_2}(\pi_{x,y}) = J_{P_2}({\color{red}\tilde{\pi}_2}) + f_2 x + f_1 y + o(x+y),\\
  &\text{where} \quad  f_1 \df \xi_1(s_1, {\color{blue}a_1}) - \xi_1(s_1, {\color{red}a_2}) = -d_1\gamma(1-\gamma)  \text{ and } f_2 \df \xi_1(s_2, {\color{blue}a_1}) - \xi_1(s_2, {\color{red}a_2}) = d_2 \gamma^2\;.
  \end{align*}
  Note that $f_1+f_2 = d_2\gamma^2 - d_1\gamma(1-\gamma)>0$ because $d_1 < d_2$ and we set $\gamma > 0.5$ in \cref{example:RMDP-failure}. Since $J_{P_1}({\color{red}\tilde{\pi}_2})=J_{P_2}({\color{red}\tilde{\pi}_2})=J_{\cP}({\color{red}\tilde{\pi}_2})$, it holds that
  \begin{align*}
  J_{\cP}(\pi_{x,y})-J_{\cP}({\color{red}\tilde{\pi}_2})
  &= \max\{f_1x+f_2y,\; f_2x+f_1y\}+o(x+y)\\
  &\geq \frac{(f_1x+f_2y+f_2x+f_1y)}{2} + o(x+y) = \frac{(f_1+f_2)(x+y)}{2} + o(x+y)\;.
  \end{align*}
  Since \(f_2+f_1 > 0\) and $x, y \geq 0$, we have $J_{\cP}(\pi_{x,y})>J_{\cP}({\color{red}\tilde{\pi}_2})$ for all sufficiently small \((x,y)\neq(0,0)\). This proves that ${\color{red}\tilde{\pi}_2}$ is a strict local minimum of \(J_{\cP}\). 

  \paragraph{PSD convergence.}
  \looseness=-1
  We next prove that PSD can get trapped in a neighborhood of ${\color{red}\tilde{\pi}_2}$. 
  Note that, for any vector $(a, b)\in\R^2$ and $p \in [0, 1]$, the Euclidean projection onto $\Delta([2])$ satisfies
  \[
  \Pi_{\Delta([2])}\bigl((p,1-p)-\eta(a,b)\bigr)
  =
  \Bigl(\clip\paren*{p-\frac{\eta}{2}(a-b)},\,
  1-\clip\paren*{p-\frac{\eta}{2}(a-b)}\Bigr),
  \]
  where $\clip(u)\df \min\{1,\max\{0,u\}\}$.

  \looseness=-1
  Let $\pi_{t}({\color{blue}a_1}\mid s_1)=x_t$ and $\pi_t({\color{blue}a_1}\mid s_2)=y_t$.
  Additionally, for $g \in \R^{\aS\times \aA}$, define $F_1(g) \df g(s_1, {\color{blue}a_1}) - g(s_1, {\color{red}a_2})$ and $F_2(g) \df g(s_2, {\color{blue}a_1}) - g(s_2, {\color{red}a_2})$.
  Then, the PSD update can be expressed as
  \begin{equation}\label{eq:xy-updated}
  \begin{aligned}
  x_{t+1} = \clip\paren*{x_t - \frac{\eta}{2}F_1(g_t)} \quad\text{and}\quad y_{t+1} = \clip\paren*{y_t - \frac{\eta}{2}F_2(g_t)}.
  \end{aligned}
  \end{equation}

  \paragraph{Step 1: the initial update.}
  \looseness=-1
  Consider the first PSD update from ${\color{red}\tilde{\pi}_2}$ with $g_1 \in \partial J_{\cP}({\color{red}\tilde{\pi}_2})$.
  We show that the first PSD update lands in a $O(\eta)$-neighborhood of ${\color{red}\tilde{\pi}_2}$ that either satisfies $x_2 = 0$ or $y_2 = 0$.

  \looseness=-1
  Since $\partial J_{\cP}({\color{red}\tilde{\pi}_2}) = \conv\{\xi_1, \xi_2\}$ where $\xi_1, \xi_2$ are defined in \eqref{eq:xi-definition}, any $g_1 \in \partial J_{\cP}({\color{red}\tilde{\pi}_2})$ can be expressed as $g_1 = \lambda \xi_1 + (1-\lambda) \xi_2$ for some $\lambda \in [0, 1]$. Hence, 
  \begin{align*}
    &F_1(g_1) = \lambda F_1(\xi_1) + (1-\lambda) F_1(\xi_2) = \lambda f_1 + (1-\lambda) f_2\\
    &F_2(g_1) = \lambda F_2(\xi_1) + (1-\lambda) F_2(\xi_2) = \lambda f_2 + (1-\lambda) f_1\\
    \implies&\quad F_1(g_1) + F_2(g_1) = f_1 + f_2 > 0\;,
  \end{align*}
  where $f_1$ and $f_2$ are defined in the strict local minimality analysis.

  \looseness=-1
  Suppose that $F_1(g_1) < 0$ which implies $F_2(g_1) > 0$. Then, the update \eqref{eq:xy-updated} only changes the policy in $s_1$ and keeps the policy in $s_2$ unchanged, i.e., $y_2 = 0$. Since $F_1(g_1) \geq f_1$, the value of $x_2$ is upper bounded as
  $$
  x_2 = \clip\paren*{x_1 - \frac{\eta}{2}F_1(g_1)} \leq -\frac{\eta}{2} f_1\;.
  $$
  Similarly, if $F_1(g_1) > 0$, we have $x_2 = 0$ and $y_2 \leq -\frac{\eta}{2} f_1\;$. 
  When $F_1(g_1) \geq 0$ and $F_2(g_1) \geq 0$, the policy remains unchanged, i.e., $(x_2, y_2) = (0, 0)$. Since $F_1(g_1) + F_2(g_1) > 0$, it is impossible that $F_1(g_1) < 0$ and $F_2(g_1) < 0$ hold simultaneously. In summary, the next coordinates $(x_2, y_2)$ satisfy 
  \begin{equation}\label{eq:second-coordinates}
  (x_2, y_2) \in \brace*{(0, 0)} \cup \brace*{\brack*{0, -\frac{\eta}{2} f_1}\times \brace*{0}} \cup \brace*{\brace*{0}\times \brack*{0,-\frac{\eta}{2} f_1}}\;.
  \end{equation}

  \paragraph{Step 2: the next update.}
  \looseness=-1
  Consider a time $t$ such that $y_t = 0$ and $x_t > 0$, i.e., the policy at state $s_2$ always chooses ${\color{red}a_2}$ while the policy at state $s_1$ has a small positive probability $x_t$ on ${\color{blue}a_1}$. 
  From the instance construction, it is clear that the worst-case transition kernel for the policy $\pi_t$ is $P_2$ and thus $g_t = \nabla J_{P_2}(\pi_t)$. 
  We show that, when $x_t = O(\eta)$, for a sufficiently small learning rate $\eta$, the next PSD update leads to a new policy with coordinates $(x_{t+1}, y_{t+1})$ satisfying $x_{t+1} = 0$ and $y_{t+1} = O(\eta)$.
  
  \looseness=-1
  Let $\varepsilon \df \frac{f_1 + f_2}{2}$.
  Since $J_{P_2}$ is smooth, there exists a constant $r_\varepsilon > 0$ such that, for all $x_t \in (0, r_\varepsilon]$, 
  \begin{equation}\label{eq:F_diff}
  -F_1(g_t) \leq -F_1(\xi_2) + \varepsilon = -f_2 + \varepsilon \quad \text{and}\quad -F_2(g_t) \leq -F_2(\xi_2) + \varepsilon = -f_1 + \varepsilon\;.
  \end{equation}
  We now set $\eta \leq \frac{2r_\varepsilon}{(-f_1 + \varepsilon)}$ and assume the value of $x_t$ to satisfy \cref{eq:F_diff}: 
  \begin{equation}\label{eq:eta-xt-condition}
   x_t \leq \frac{\eta}{2}(-f_1 + \varepsilon) \leq r_\varepsilon\;.
  \end{equation}
  Then, the next PSD update leads to a new policy with coordinates $(x_{t+1}, y_{t+1})$ satisfying
  \begin{align*}
  x_{t+1} &= \clip\paren*{x_t - \frac{\eta}{2}F_1(g_t)} \leq
  \clip\paren*{\frac{\eta}{2}(-f_1 + \varepsilon) - \frac{\eta}{2}(f_2 - \varepsilon)}=0 \\
  \text{and}\quad y_{t+1} &= \clip\paren*{0 - \frac{\eta}{2}F_2(g_t)} \leq \frac{\eta}{2} (-f_1 + \varepsilon),
  \end{align*}
  where the inequalities use \cref{eq:F_diff}. 
  
  \looseness=-1
  By symmetry, if $x_t = 0$ and $y_t = \delta_y \leq \frac{\eta}{2}(-f_1 + \varepsilon)$, we can similarly show that the next PSD update leads to a new policy with coordinates $(x_{t+1}, y_{t+1})$ satisfying 
  $$x_{t+1} \leq \frac{\eta}{2}(-f_1 + \varepsilon) \quad \text{and}\quad y_{t+1} = 0\;.
  $$

  \paragraph{Step 3: repeating argument.}
  Consider a sufficiently small learning rate $\eta \leq \frac{2r_\varepsilon}{(-f_1 + \varepsilon)}$ where $\varepsilon =\frac{f_1 + f_2}{2}$ and $r_\varepsilon$ is defined to satisfy \cref{eq:F_diff}.
  Then, using the results from \textbf{Step 1}, the first PSD update from ${\color{red}\tilde{\pi}_2}$ leads to a new policy with coordinates $(x_2, y_2)$ satisfying 
  $$
    (x_2, y_2) \in \brace*{(0, 0)} \cup \brace*{\brack*{0, -\frac{\eta}{2} f_1}\times \brace*{0}} \cup \brace*{\brace*{0}\times \brack*{0,-\frac{\eta}{2} f_1}}\;.
  $$

  \looseness=-1
  Since this $(x_2, y_2)$ satisfies the conditions for $(x_t, y_t)$ in \textbf{Step 2} (see \cref{eq:eta-xt-condition} for $x_t$) or $(x_t, y_t)=(0, 0)$ as in \textbf{Step 1}, the next PSD update leads to coordinates $(x_3, y_3)$ satisfying 
  $$
    (x_3, y_3) \in \brace*{(0, 0)} \cup \brace*{\brack*{0, \frac{\eta}{2} (-f_1 + \varepsilon)}\times \brace*{0}} \cup \brace*{\brace*{0}\times \brack*{0,\frac{\eta}{2} (-f_1 + \varepsilon)}}\;.
  $$

  \looseness=-1
  This $(x_3, y_3)$ again satisfies the conditions for $(x_t, y_t)$ in \textbf{Step 2} or \textbf{Step 1}.
  The claim follows by iterating the above argument for all $t \geq 2$ and noting that
  $$
  \frac{\eta}{2} (-f_1 + \varepsilon) = \frac{\eta}{2} \frac{-f_1 + f_2}{2} 
  = \frac{\eta}{4} \paren*{d_2\gamma^2 + d_1\gamma(1-\gamma)} 
  \leq \frac{\eta}{4 \cdot 0.1}0.9 \leq 3 \eta\;,
  $$
  where we used the fact that $d_1 < d_2 < \frac{1}{1-\gamma}$ and $\gamma = 0.9$ in \cref{example:RMDP-failure}.
 \end{proof}

\section{Proofs of NP-hardness}\label{sec:proof-of-NP-hard}

In this section, we provide proofs of \cref{proposition:NP-hardness} and \cref{proposition:NP-hard-sa-rect}.

\subsection{NP-hardness of finite \texorpdfstring{$\cP$ and singleton $\cC$}{P and singleton C}}

\begin{proposition}[Restatement of \cref{proposition:NP-hardness}]\label{proposition:NP-hardness-restated}
  For any $\varepsilon \leq 0.5 \gamma^3 (1-\gamma)^{-1}$, finding an $\varepsilon$-optimal policy in RMDPs with a finite transition uncertainty set is NP-hard.
\end{proposition}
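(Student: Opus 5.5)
We reduce from \textsc{3-SAT}. Given a formula with variables $x_1,\dots,x_n$ and clauses $C_1,\dots,C_m$, we build in polynomial time an RMDP with a singleton cost and $m$ transition kernels $\cP=\{P_1,\dots,P_m\}$, one per clause. The gap is tailored to the bound $\varepsilon \le 0.5\gamma^3(1-\gamma)^{-1}$:
\begin{itemize}
\item if the formula is satisfiable, then $J_{\cP}(\pi^\star)=0$;
\item if it is unsatisfiable, then every policy has $J_{\cP}(\pi) \geq \gamma^3(1-\gamma)^{-1}\cdot c_0$ for a fixed constant $c_0$.
\end{itemize}
Any algorithm returning an $\varepsilon$-optimal policy then decides satisfiability by evaluating $J_{\cP}$ at the returned policy. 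This evaluation is polynomial because $\cP$ is finite, so it is a max over $m$ policy evaluations.

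The construction is as follows.
\begin{itemize}
\item \textbf{Variable states.} Each variable gets a state $s_i$ with two actions, ``true'' and ``false''. The policy's choice at $s_i$ encodes an assignment, possibly randomized.
\item \textbf{Clause kernels.} Under $P_j$, the initial state moves deterministically to a clause gadget for $C_j$. It then visits the three states of the variables in $C_j$ in sequence, for example $s_{i_1}\to s_{i_2}\to s_{i_3}$.
\item \textbf{Transitions and costs.} At each visited variable state, the action satisfying the literal in $C_j$ sends the process under $P_j$ to an absorbing zero-cost state. The other action continues to the next literal's state. After the third failure, the process enters an absorbing state with cost $1$ per step.
\end{itemize}
Because a state $s_i$ is shared across kernels, the policy at $s_i$ cannot depend on $j$. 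This coupling is exactly what makes the instance non-rectangular. The cost-$1$ sink is reached at depth roughly $3$, giving $J_{P_j}(\pi)=\gamma^3(1-\gamma)^{-1}\Pr_\pi[\text{all three literals of }C_j\text{ falsified}]$, up to the exact depth offset, which I would fix so that the exponent is $3$. Under the product structure, with independent randomization at distinct states, this probability equals $\prod_{\ell\in C_j}(1-p_\ell)$. Here $p_\ell$ is the probability that the policy satisfies literal $\ell$.

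Completeness is immediate: a satisfying assignment, played deterministically, gives cost $0$ under every $P_j$. Soundness is the key step. We must show that for any randomized policy, $\max_j \prod_{\ell\in C_j}(1-p_\ell)\ge c_0$ when the formula is unsatisfiable, with $c_0$ large enough (ideally $c_0\ge 1/2$) to beat $\varepsilon$.

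I expect this bound to be the main obstacle. A uniformly random policy gives only $1/8$ per clause, so a naive product bound fails. The fix is to amplify the gadget so randomization is penalized. One option is to use kernels indexed by pairs, or to add consistency-checking kernels. A cleaner option is to choose the ``continue'' structure so that the cost is incurred if \emph{any} literal choice is wrong in a min-sense. Concretely, I would first try a rounding argument. Given a randomized $\pi$, round each $p_i$ to $1$ if $p_i\ge 1/2$ and to $0$ otherwise. Unsatisfiability yields a clause whose three literals all have $p_\ell\le 1/2$. The gadget should then be designed so that the cost under that $P_j$ is at least $\gamma^3(1-\gamma)^{-1}\min_\ell(1-p_\ell)\ge 0.5\gamma^3(1-\gamma)^{-1}$. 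This needs a gadget where the cost is governed by the least-violated literal rather than by a product. One way to get this is to split $P_j$ into three kernels $P_{j,\ell}$ that each test one literal of $C_j$ while the others are absorbed. The clause semantics would then be recovered through the max over $\cP$ combined with an additional structural trick.

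If this does not work directly, the fallback is to restrict to deterministic policies. That restriction is justified by showing the objective is maximized at deterministic variable choices via multilinearity, which may require $\varepsilon$ slightly below the stated bound. Verifying the exact constant $0.5\gamma^3(1-\gamma)^{-1}$ is then routine bookkeeping.
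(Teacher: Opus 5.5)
Your reduction has a genuine gap at the step you flag as the obstacle, and none of your proposed repairs closes it.

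\textbf{The sequential gadget cannot reach the constant $1/2$.} Under $P_j$ the fixed transitions perform the three literal tests one after another, so the cost is $\gamma^{d}(1-\gamma)^{-1}\prod_{\ell\in C_j}(1-p_\ell)$. Take the unsatisfiable formula consisting of all eight clauses on $x_1,x_2,x_3$. Under every product distribution the eight falsification probabilities sum to exactly $1$, and the uniform policy makes each equal to $1/8$. So the optimal robust cost in this ``unsatisfiable'' instance is only $\tfrac18\gamma^{d}(1-\gamma)^{-1}$. Your own rounding argument does show $\max_j\prod_{\ell\in C_j}(1-p_\ell)\ge 1/8$ for every unsatisfiable formula. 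Hence this gadget proves hardness only for $\varepsilon<\tfrac18\gamma^{d}(1-\gamma)^{-1}$, which is weaker than the statement.

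\textbf{Splitting $P_j$ into $P_{j,1},P_{j,2},P_{j,3}$ goes the wrong way.} The adversary's max over these kernels yields $\max_\ell(1-p_\ell)$, which encodes a conjunction of the literals, not a disjunction. The ``additional structural trick'' that would restore the disjunction is left unspecified.

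\textbf{The deterministic-policy fallback is unsound.} Each $J_{P_j}$ is multilinear in $(p_i)$, but $J_{\mathcal P}=\max_j J_{P_j}$ is not, and its minimum need not be attained at a deterministic policy. In the same eight-clause example, every deterministic policy has value $\gamma^{d}(1-\gamma)^{-1}$, while the uniform policy has value $\tfrac18\gamma^{d}(1-\gamma)^{-1}$. Since the statement concerns $\varepsilon$-optimality over all of $\Pi$, you cannot restrict to deterministic policies.

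\textbf{The missing idea is to give the disjunction to the minimizing agent}, rather than to the transitions or to the adversary. This is what the paper does.
\begin{itemize}
\item Under $P_j$, send $s_{\mathrm{ini}}$ to a clause state $s_{C_j}$ that is reached \emph{only} under $P_j$.
\item At $s_{C_j}$ the agent has three actions, each leading to the state of one of the clause's variables.
\item At a variable state, the literal-satisfying action goes to a zero-cost absorbing state and the other goes to a cost-$1$ absorbing state. The orientation may depend on the sign of the literal in $C_j$, since the kernels are arbitrary.
\end{itemize}
Then
\[
J_{P_j}(\pi)=\gamma^{3}(1-\gamma)^{-1}\sum_i\pi(a_i\mid s_{C_j})\,(1-p_{\ell_i})\ \ge\ \gamma^{3}(1-\gamma)^{-1}\min_{\ell\in C_j}(1-p_\ell).
\]
Because $s_{C_j}$ is visited only under $P_j$, the agent can test the best literal of each clause without affecting any other kernel. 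Your rounding at $1/2$ then gives a clause with $\min_\ell(1-p_\ell)\ge 1/2$, which is the required $0.5\gamma^{3}(1-\gamma)^{-1}$ gap. The exponent $3$ comes out naturally from the path $s_{\mathrm{ini}}\to s_{C_j}\to s_{x}\to$ absorbing state.
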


\begin{proof}
We prove the theorem by a reduction from 3-SAT.

\begin{definition}[3-SAT decision problem]
A 3-SAT instance consists of $N$ Boolean variables $(x_1, \dots x_{N}) \in \{ \text{False}, \text{True} \}^N$ and $M$-clauses $C_1, \dots, C_M$, where each clause is a disjunction of three literals. A literal is either a variable $x_n$ or its negation $\neg x_{n}$. For example, a clause $C_m$ may take the form 
$$
C_{m} = (x_i \vee \neg x_j \vee x_k)\;.
$$
The 3-SAT decision problem asks whether there exists an assignment $(x_1, \dots, x_{N}) \in \{ \text{False}, \text{True} \}^N$ such that the entire formula evaluates to True:
$$ C_{1} \wedge C_{2} \wedge \cdots \wedge C_{m} = \text{True}
$$
\end{definition}

\paragraph{RMDP construction.}
\looseness=-1
We construct an RMDP instance corresponding to a given 3-SAT instance. We begin by introducing the following states:

\begin{itemize}
\item the initial state $s_{\text{ini}}$;
\item the clause states $s_{C_1}, \ldots, s_{C_M}$;
\item the variable states $s_{x_1}, \ldots, s_{x_N}$; and
\item the absorbing states $s_0$ and $s_{+1}$, where the agent incurs $+1$ cost in $s_{+1}$.
\end{itemize}

\looseness=-1
We set the cost to $0$ for all state--action pairs except for the absorbing state $s_{+1}$.

\looseness=-1
For each clause $C_m$, we define a deterministic transition kernel $P_m$. Under $P_m$, the agent transitions from the initial state to the corresponding clause state $s_{C_m}$:
$$
P_{m}(s_{C_{m}} \mid s_{\text{ini}})=1
$$

\looseness=-1
Let $m_1, m_2, m_3$ denote the indices of the three literals appearing in clause $C_m$; for example, $C_m = (x_{m_1} \vee \neg x_{m_2} \vee x_{m_3})$. At the clause state $s_{C_m}$, the agent chooses an action from the set ${a_1, a_2, a_3}$. Selecting action $a_i \in {a_1, a_2, a_3}$ deterministically transitions the agent to the corresponding variable state $s_{x_{m_i}}$.

\looseness=-1
At a variable state $s_{x_i}$, the agent has two available actions, $a_{\text{F}}$ and $a_{\text{T}}$.
If the $i$-th literal is the positive literal $x_i$, then action $a_{\text{T}}$ leads to the absorbing state $s_0$, while action $a_{\text{F}}$ transitions to the cost state $s_{+1}$.
If the literal is negated, i.e., $\neg x_i$, these transitions are reversed: action $a_{\text{F}}$ leads to $s_0$, while action $a_{\text{T}}$ transitions to $s_{+1}$.
\cref{fig:three-sat-rmdp} illustrates the resulting transition structure under $P_m$ for the clause $C_m = (x_{m_1} \vee \neg x_{m_2} \vee x_{m_3})$.

\begin{figure}[t]
  \begin{center}
  \includegraphics[width=15cm]{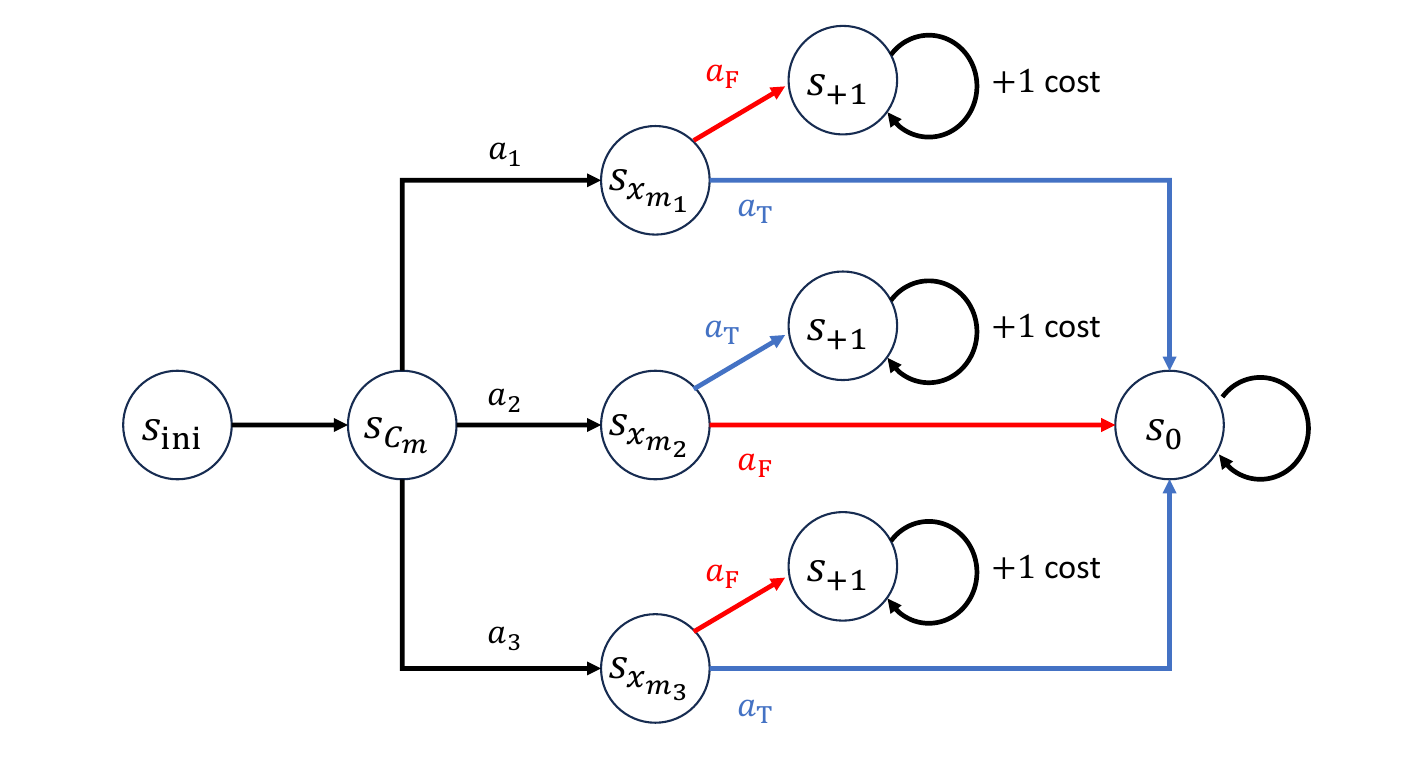}
  \caption{An illustration of the transition kernel $P_m$ for a clause $C_m = (x_{m_1} \vee \neg x_{m_2} \vee x_{m_3})$.}
  \label{fig:three-sat-rmdp}
  \end{center}
\end{figure}

\paragraph{Policy construction.}
We prove the claim by showing that the optimal robust total cost of this RMDP is $0$ when the 3-SAT instance is satisfiable, and is at least $0.5\gamma^{3}(1-\gamma)^{-1}$ otherwise.

\looseness=-1
Since the agent reaches a clause state $s_{C_m}$ only under the transition kernel $P_m$, greedily selecting an action at $s_{C_m}$ always minimizes the robust cost among all policies at that state. Therefore, without loss of generality, we restrict our attention to the policies that satisfy
$$
\pi(a_{i} \mid s_{C_{m}}) =1 \quad\text{where}\quad i \in \arg\min_{j \in \{ 1, 2, 3 \}} V^{\pi}_{P_{m}}(s_{x_{m_{j}}}) \text{, breaking ties arbitrarily.}
$$

\paragraph{When 3-SAT is unsatisfiable.} Using a policy $\pi$, we construct a 3-SAT assignment $\beta_{\pi}: \{ 1,\cdots N \}\to \{ \text{False}, \text{True} \}$ to the variables $x_1 \cdots x_N$ by
$$
\beta_{\pi}(n) = 
\begin{cases}
\text{True} \quad &\text{if } \pi(a_{\text{T}} | s_{x_{n}}) > 0.5\\
\text{False} \quad &\text{otherwise }
\end{cases}
$$
For a clause $C_m$, we have the following two cases:
\begin{itemize}
  \item If $i$-th literal is $x_{m_i}$, $\beta_{\pi}$ satisfies the $i$-th literal if and only if $V^\pi_{P_{m}}(s_{x_{m_{i}}}) < 0.5 \gamma (1-\gamma)^{-1}$. 
  \item If $i$-th literal is $\neg x_{m_i}$, $\beta_\pi$ satisfies the $i$-th literal if and only if $V^\pi_{P_{m}}(s_{x_{m_{i}}}) \leq 0.5 \gamma (1-\gamma)^{-1}$.
\end{itemize}
Since the 3-SAT instance is unsatisfiable, for any policy $\pi$ there exists a clause $C_m$ for which all three literals are unsatisfied by $\beta_{\pi}$. Consequently,
$$
\min_{\pi \in \Pi} J_{\cP}(\pi) \geq 0.5 \gamma^3 (1-\gamma)^{-1}\;.
$$

\paragraph{When 3-SAT is satisfiable.} Let $\beta$ be an assignment that satisfies the 3-SAT. Define a policy by 
$$
\pi_{\beta}(a_{\text{T}} \mid s_{x_{n}}) = 
\begin{cases}
& 1 \quad \text{if } \beta(n) = \text{True}\\
&0 \quad \text{otherwise}
\end{cases}
$$
Since $\beta$ satisfies the 3-SAT, for any clause $C_m$, this policy attains $V^{\pi_{\beta}}_{P_{m}}(s_{x_{m_{i}}})=0$ at least one of $i\in \{ 1, 2, 3 \}$. Due to the greedy action selection at the clause state $s_{C_{m}}$, the total cost under $P_m$ becomes $J_{P_{m}}(\pi_{\beta})=0$. Consequently
$$
\min_{\pi} J_{\cP}(\pi) = 0
$$

\looseness=-1
Therefore, if one can find an $\varepsilon$ optimal policy $\pi_{\varepsilon}$ with $\varepsilon< 0.5\gamma^3 (1-\gamma)^{-1}$, we can solve the 3-SAT problem: It is satisfiable if $J_{\cP}(\pi_{\varepsilon}) < 0.5 \gamma^3 (1-\gamma)^{-1}$, unsatisfiable otherwise.
\end{proof}

\subsection{NP-hardness of \texorpdfstring{$sa$-rectangular $\cP$ and finite $\cC$}{sa-rectangular P and finite C}}

\begin{proposition}[First claim of \cref{proposition:NP-hard-sa-rect}]\label{proposition:NP-hard-sa-rect-restated}
  For any $\varepsilon \leq 0.5\gamma^2(1-\gamma)^{-1}$, finding an $\varepsilon$-optimal policy with an $sa$-rectangular finite transition set and a finite cost set is NP-hard. 
\end{proposition}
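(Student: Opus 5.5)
The plan is to reduce from the RMDP of \cref{proposition:NP-hardness-restated}, i.e.\ from 3-SAT, by moving the clause-selection uncertainty from the transitions into the cost. The point is that the only non-rectangular feature of that construction is the first step out of $s_{\text{ini}}$, where $P_m$ forces a jump to $s_{C_m}$. I will make that first step deterministic and state-independent of $m$ by adding one extra clause-selection state. Concretely, from $s_{\text{ini}}$ the agent moves (under one fixed transition) uniformly to the clause states $s_{C_1},\dots,s_{C_M}$. The rest of the dynamics (clause state $\to$ variable state via $a_1,a_2,a_3$, then variable state $\to s_0$ or $s_{+1}$ via $a_{\text{T}},a_{\text{F}}$) is kept as before.

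The catch is that the agent at $s_{C_m}$ must face the literal-dependent outcome. The transition from a variable state $s_{x_n}$ cannot depend on which clause the agent came from, so I will duplicate variable states per clause occurrence, using states $s_{m,i}$. The policy must still be consistent across occurrences of the same variable, so I will instead keep a single state $s_{x_n}$ whose actions lead to two absorbing states $s^{\text{T}}$ and $s^{\text{F}}$ tagged by the chosen truth value. The literal sign is then encoded in the cost. For each clause $m$ I define a cost $c_m$ that is zero everywhere except in a single absorbing cost state reached in a clause-consistent way. Since the cost cannot know the clause either, I will instead route through per-(variable, value) absorbing states $s_{n,\text{T}}$ and $s_{n,\text{F}}$. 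Cost $c_m$ then charges $1$ per step at $s_{n,b}$ exactly when variable $n$ appears in $C_m$ with the literal falsified by $b$, and charges $M$-scaled amounts so that entering via $s_{C_{m'}}$ for $m'\neq m$ is harmless.

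The remaining issue is that under a single transition all clauses are visited with weight $1/M$. I will handle this either by making the cost under $c_m$ large only for variables of clause $m$ and multiplying by $M$ (normalizing into $[0,1]$ by scaling the whole problem, which costs a $\mathrm{poly}(M)$ factor in $\varepsilon$), or, to obtain the stated threshold $0.5\gamma^2(1-\gamma)^{-1}$, by letting the transition set be $sa$-rectangular and finite at $s_{\text{ini}}$: $\cP_{s_{\text{ini}},a}=\{e_{s_{C_1}},\dots,e_{s_{C_M}}\}$. This last choice is genuinely $sa$-rectangular, since only one state--action pair is uncertain, and the adversary's choice of $m$ there is coupled with the choice of $c_m$ only through the maximization. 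I expect the main obstacle to be this coupling. With rectangular $\cP$ and $\cC$ chosen jointly by the max, the adversary picks the pair $(c_m,P)$ maximizing the cost. I will set $c_m$ to charge only at $s_{n,b}$ states falsifying literals of $C_m$, so the adversary gains nothing by pairing $c_m$ with a transition to $C_{m'}$ unless the literals overlap. Even then the cost is bounded by that of the matched pair, and when $m'=m$ the value equals $\gamma^2\cdot\mathbb{1}[\text{chosen literal false}](1-\gamma)^{-1}$ from $s_{C_m}$, giving the factor $\gamma^2$ (one fewer step than before, since the $s_{+1}$ hop is merged into the tagged absorbing states).

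Finally I will repeat the argument of \cref{proposition:NP-hardness-restated}. Greedy actions at clause states are without loss of generality. Rounding $\pi(a_{\text{T}}\mid s_{x_n})>0.5$ gives an assignment $\beta_\pi$. If the formula is unsatisfiable, some clause has all three literals weighted at least $0.5$ on the falsifying side, so the robust cost is at least $0.5\gamma^2(1-\gamma)^{-1}$. If it is satisfiable, $\pi_\beta$ achieves $0$. Computing an $\varepsilon$-optimal policy thus decides 3-SAT. For RCMDP feasibility, I will take $c_1,\dots,c_N$ as these $c_m$ with threshold $b=\tfrac14\gamma^2(1-\gamma)^{-1}$, so feasibility holds iff the formula is satisfiable.
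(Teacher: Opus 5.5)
Your final construction has the same skeleton as the paper's: uncertainty only at $(s_{\text{ini}},a)$ with $\cP_{s_{\text{ini}},a}=\{e_{s_{C_1}},\dots,e_{s_{C_M}}\}$, and one cost $c_m$ per clause. But the step you flag as the main obstacle is where it fails. The claim that a mismatched pair $(c_m,P_{m'})$ is ``bounded by that of the matched pair'' is false. Since $\cU=\cC\times\cP$, we have $J_\cU(\pi)=\max_{m,m'}J_{c_m,P_{m'}}(\pi)$. Moreover, $c_m$ charges the shared absorbing state $s_{n,b}$ no matter which clause state the agent passed through. So a variable occurring with opposite signs in two clauses is charged by one cost or the other, whatever the agent does.

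Concretely, take the satisfiable formula $C_1=(x_1\vee x_2\vee x_3)$, $C_2=(\neg x_1\vee\neg x_2\vee\neg x_3)$. Under $P_1$ the agent ends either in some $s_{i,\mathrm{T}}$, which $c_2$ charges, or in some $s_{i,\mathrm{F}}$, which $c_1$ charges. Hence $J_{c_1,P_1}(\pi)+J_{c_2,P_1}(\pi)=\gamma^3(1-\gamma)^{-1}$ for every $\pi$, and $\min_\pi J_\cU(\pi)\ge 0.5\gamma^3(1-\gamma)^{-1}$ even though the formula is satisfiable. The direction ``satisfiable $\Rightarrow$ optimal value $0$'' breaks, so the two cases are no longer separated.

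What you need is not that mismatched routing be harmless, but that it be strictly unprofitable for the adversary. The paper sets $c_m(s_{C_i},\cdot)=-(1-\gamma)^{-1}$ for $i\neq m$ and $c_m(s_{C_m},\cdot)=0$. Then for $m'\neq m$, $J_{c_m,P_{m'}}(\pi)\le-\gamma(1-\gamma)^{-1}+\gamma^2(1-\gamma)^{-1}=-\gamma<0\le J_{c_m,P_m}(\pi)$. Consequently $J_\cU(\pi)=\max_m J_{c_m,P_m}(\pi)$, and the instance reduces to the finite-$\cP$ instance of \cref{proposition:NP-hardness-restated}, with values scaled by $1/\gamma$.

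The paper also makes the variable states $s_{x_n}$ absorbing and charges the falsifying action there directly; that is what yields the factor $\gamma^2$. In your construction the charged states $s_{n,b}$ are first reached at step $3$ from $s_{\text{ini}}$. The hop $s_{x_n}\to s_{n,b}$ replaces the old hop $s_{x_n}\to s_{+1}$ rather than removing it. So even after the fix you would only get the gap $0.5\gamma^3(1-\gamma)^{-1}$, which does not cover the stated range of $\varepsilon$.

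Finally, your first alternative cannot work at all. That is the version with one fixed kernel moving uniformly to the clauses and costs scaled by $M$. With a singleton $\cP$ and finite $\cC$, $\min_\pi\max_m J_{c_m,P}(\pi)$ is a linear program over the occupancy polytope. Such a reduction would therefore imply $\text{P}=\text{NP}$.
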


  \begin{figure}[t]
    \begin{center}
    \includegraphics[width=15cm]{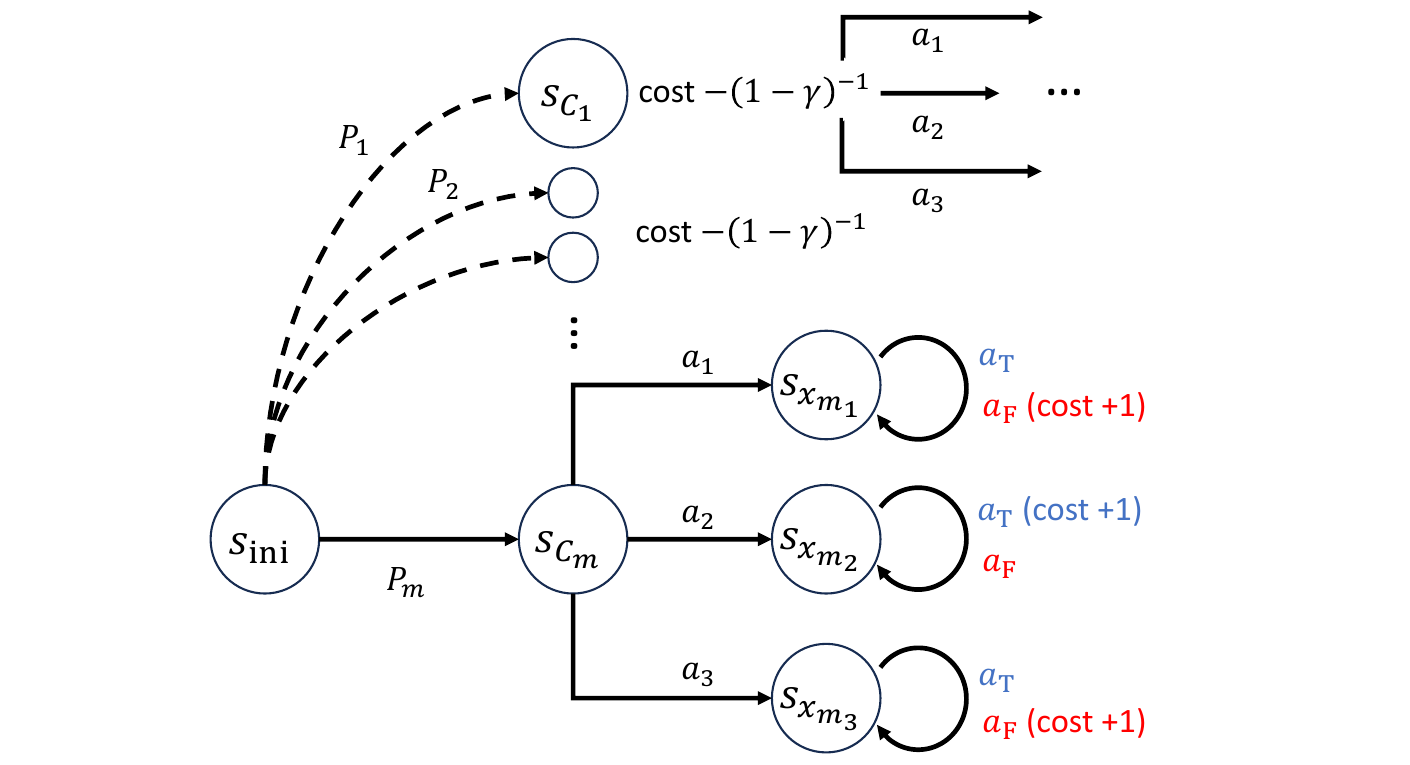}
    \caption{An illustration of the transition and the $m$-th cost function for $C_m = (x_{m_1} \vee \neg x_{m_2} \vee x_{m_3})$.
    Since the costs at other clause states $s_{C_i}$ for $i \neq m$ are set to $-(1-\gamma)^{-1}$, the worst-case transition kernel for $c_m$ is $P_m$, which transitions the agent to $s_{C_m}$ and incurs zero cost.
    }
    \label{fig:sa-three-sat-rmdp}
    \end{center}
  \end{figure}

\begin{proof}[Proof of \cref{proposition:NP-hard-sa-rect-restated}]
  We prove the theorem by constructing an RMDP instance which is almost equivalent to that in \cref{proposition:NP-hardness-restated}.
  Consider the following state space:
  \begin{itemize}
  \item the initial state $s_{\text{ini}}$;
  \item the clause states $s_{C_1}, \ldots, s_{C_M}$; and 
  \item the variable states $s_{x_1}, \ldots, s_{x_N}$.
  \end{itemize}
  The action space is identical to that in \cref{proposition:NP-hardness-restated}.

  \looseness=-1
  We define the $sa$-rectangular transition uncertainty set $\cP$ as follows: 
  \begin{itemize}
    \item Each variable state $s_{x_n}$ is absorbing; that is, any action taken at $s_{x_n}$ transitions to $s_{x_n}$ with probability $1$.
    \item Transitions at the clause states $s_{C_m}$ are the same as in \cref{proposition:NP-hardness-restated}: choosing an action $a_i \in {a_1, a_2, a_3}$ deterministically transitions the agent to the corresponding variable state $s_{x_{m_i}}$.
    \item At the initial state $s_{\text{ini}}$, we define $M$ transition kernels ${P_1, \ldots, P_M}$ such that, for each $m \in [M]$, $P_m(s_{C_m} \mid s_{\text{ini}}) = 1$.
  \end{itemize}
  This construction yields an $sa$-rectangular uncertainty set, since the uncertainty arises only at the initial state $s_{\text{ini}}$ and is independent of the agent’s action.

  \looseness=-1
  We next define the finite cost set $\cC = \brace{c_1, \dots, c_M}$ as follows. For each $m \in [M]$, 
  \begin{itemize}
    \item At clause state $s_{C_i}$, we set $c_m(s_{C_i}, \cdot) = -(1-\gamma)^{-1}$ if $i \neq m$, and $c_m(s_{C_m}, \cdot) = 0$.
    \item At variable states $s_{x_{m_i}}$ for $i=1, 2, 3$, we set $c_m(s_{x_{m_i}}, a_{\text{T}}) = 0$ and $c_m(s_{x_{m_i}}, a_{\text{F}}) = 1$ if the $i$-th literal is $x_{m_i}$; otherwise, we set $c_m(s_{x_{m_i}}, a_{\text{T}}) = 1$ and $c_m(s_{x_{m_i}}, a_{\text{F}}) = 0$.
    \item All remaining costs are set to zero.
  \end{itemize}
  \cref{fig:sa-three-sat-rmdp} illustrates the resulting transition structure and cost function for the clause $C_m = (x_{m_1} \vee \neg x_{m_2} \vee x_{m_3})$.

  \looseness=-1
  Note that, for the cost function $c_m$, the worst-case transition kernel is $P_m$, which transitions the agent to the clause state $s_{C_m}$ and incurs zero cost. This is because all other clause states $s_{C_i}$ for $i \neq m$ incur a large negative cost of $-(1-\gamma)^{-1}$. Consequently, we have
  $$
  \max_{m \in [M]} J_{c_m, P_m}(\pi) = J_\cU(\pi) \quad \forall \pi \in \Pi\;.
  $$
  Therefore, the constructed RMDP is equivalent to a finite uncertainty set with $M$ elements $\cU \df \{(c_m, P_m) \mid m \in [M]\}$.

  \looseness=-1
  Under the $m$-th model $(c_m, P_m)$, it is straightforward to verify that the total cost of any policy $\pi$ is equal to that in \cref{proposition:NP-hardness-restated}, scaled by a factor of $1/\gamma$. Hence, by the same argument as in \cref{proposition:NP-hardness-restated}, we conclude that finding an $\varepsilon$-optimal policy with $\varepsilon< 0.5\gamma^2 (1-\gamma)^{-1}$ for this RMDP is NP-hard: It is satisfiable if $J_{\cU}(\pi_{\varepsilon}) < 0.5 \gamma^2 (1-\gamma)^{-1}$, unsatisfiable otherwise.
\end{proof}

\begin{proposition}[Second claim of \cref{proposition:NP-hard-sa-rect}]\label{proposition:NP-hard-sa-rect-restated2}
  Finding a feasible policy for RCMDPs with $sa$-rectangular $\cP$ is NP-hard.
\end{proposition}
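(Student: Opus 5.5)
I will reduce 3-SAT to RCMDP feasibility by reusing the construction from the proof of \cref{proposition:NP-hard-sa-rect-restated}. Given a 3-SAT instance, I take the same states $s_{\text{ini}}$, $s_{C_1},\dots,s_{C_M}$, $s_{x_1},\dots,s_{x_N}$, the same $sa$-rectangular transition set $\cP$, and the constraint costs $c_1,\dots,c_M$ as defined there. The objective cost $c_0$ is set to zero everywhere, since feasibility does not depend on it. The RCMDP then reads
\[
\min_{\pi\in\Pi} 0 \quad\text{such that}\quad \max_{m\in[M]} J_{c_m,\cP}(\pi)\le b,
\]
with a threshold $b>0$ chosen strictly between the two values separated in the previous proof.

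Next I recall from that proof that $\max_{m} J_{c_m,\cP}(\pi) = \max_m J_{c_m,P_m}(\pi) = J_{\cU}(\pi)$ for every $\pi$. This holds because, for cost $c_m$, the adversary's best choice at $s_{\text{ini}}$ is $P_m$: every other clause state carries the negative cost $-(1-\gamma)^{-1}$. The same proof shows a dichotomy for the optimal value. If the formula is satisfiable, then $\min_\pi J_{\cU}(\pi)=0$, witnessed by $\pi_\beta$ together with greedy clause actions. If it is unsatisfiable, then $J_{\cU}(\pi)\ge 0.5\gamma^2(1-\gamma)^{-1}$ for every $\pi$. Taking $b \df 0.25\gamma^2(1-\gamma)^{-1}$, which is positive and polynomially encodable, the RCMDP is feasible if and only if the formula is satisfiable. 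Since the instance has size polynomial in $N$ and $M$, deciding feasibility is NP-hard. Finding a feasible policy is then NP-hard as well: any algorithm returning a feasible policy, or reporting infeasibility, lets us check the returned policy by efficient evaluation of the $M$ constraints and thereby decide 3-SAT.

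The main obstacle is making the unsatisfiable-case lower bound airtight for arbitrary stochastic policies, including stochastic actions at the clause states. The earlier proof restricted attention to greedy clause-state policies, so I will justify that restriction explicitly. State $s_{C_m}$ is reached only under $(c_m,P_m)$, and switching to greedy clause actions weakly decreases every $J_{c_m,P_m}$ at once, so the restriction loses nothing. I will also check that rounding via $\beta_\pi$ with threshold $0.5$ gives, in each unsatisfied clause, a variable-state value of at least $0.5\gamma(1-\gamma)^{-1}$. In the absorbing variable states of this construction the factor is $(1-\gamma)^{-1}$ rather than $\gamma(1-\gamma)^{-1}$, so the value is at least $0.5(1-\gamma)^{-1}$; discounting over the two steps from $s_{\text{ini}}$ then gives the bound $0.5\gamma^2(1-\gamma)^{-1}$. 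Because $b$ sits strictly below this bound, the slack absorbs any constant-factor discrepancy in the bookkeeping.
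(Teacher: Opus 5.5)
Your proposal is correct. It uses the same construction and the same satisfiable/unsatisfiable dichotomy as the paper. The difference lies in the constraint threshold and in how hardness is transferred, and on that point your version is the sound one.

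The paper sets the constraint level to exactly $0.5\gamma^2(1-\gamma)^{-1}$ and then appeals to the first claim. At that level, consider the policy that plays $a_{\text{T}}$ and $a_{\text{F}}$ with probability $1/2$ at every variable state. Under $c_m$, each variable state of clause $m$ then has value $0.5(1-\gamma)^{-1}$. Hence $J_{c_m,P_m}(\pi)=0.5\gamma^2(1-\gamma)^{-1}$ for every $m$, whatever the clause-state actions. So that RCMDP is feasible for every formula, and finding a feasible policy at that level is trivial.

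Your choice $b=0.25\gamma^2(1-\gamma)^{-1}$ sits strictly inside the gap. Together with your direct argument that the RCMDP is feasible $\iff$ the formula is satisfiable (rather than routing through $\varepsilon$-optimality), this gives a genuine reduction. Your verification step also correctly handles the search version of the problem.

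One simplification: you do not need the greedy restriction at the clause states. The value $V^\pi_{c_m,P_m}(s_{C_m})$ is $\gamma$ times a convex combination of the three variable-state values. In an unsatisfied clause each of these is at least $0.5(1-\gamma)^{-1}$, so the lower bound $0.5\gamma^2(1-\gamma)^{-1}$ holds directly for arbitrary stochastic policies.
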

\begin{proof}
  Consider the $sa$-rectangular transition set $\cP$ and cost functions $\cC = \brace{c_1, \dots, c_M}$ constructed in the proof of \cref{proposition:NP-hard-sa-rect-restated}. 
  We consider the following RCMDP problem:
\begin{align*}
   \text{(RCMDP)} \quad \min_{\pi \in \Pi} J_{c_0, \cP}(\pi) \quad \text{such that} \quad \max_{m \in [M]} J_{c_m, \cP}(\pi) \leq 0.5 \gamma^2 (1-\gamma)^{-1}\;, 
\end{align*}
where $c_0$ is the objective cost function and $c_1, \ldots, c_M$ are constraint cost functions.
A feasible policy must satisfy 
$$
\max_{m \in [M]} J_{c_m, \cP}(\pi) \leq 0.5 \gamma^2 (1-\gamma)^{-1}\;.
$$
However, identifying such a policy is NP-hard as shown by \cref{proposition:NP-hard-sa-rect-restated}.
\end{proof}

\section{Proof of \texorpdfstring{\cref{proposition:independent-conditions} and \cref{proposition:s-rectangular-subgrad-dom}}{Proposition 15 and Proposition 17}}\label{sec:proof of independence}

\begin{proposition}[Restatement of \cref{proposition:independent-conditions}]
  There exist RMDP instances that satisfy only one of the following conditions:
  \begin{itemize}
    \item (\cref{assm:unique-worst-case-occupancy}) Let $\cP^\pi \df \brace{P \given (c, P) \in \cU^\pi}$ be the set of worst-case transition kernels under policy $\pi$. $\cP^\pi$ is a singleton for any $\pi \in \Pi$.
    \item (\cref{assm:unique-robust-value}) Let $\cQ^\pi \df \brace{Q^\pi_{c, P} \given (c, P) \in \cU^\pi}$ be the set of worst-case action-value functions under policy $\pi$. $\cQ^\pi$ is a singleton for any $\pi \in \Pi$.
  \end{itemize}
\end{proposition}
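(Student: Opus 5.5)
We will exhibit two explicit small instances, one for each direction. Each instance only needs a single nontrivial state, so that the worst-case sets $\cU^\pi$ can be computed by hand for every policy.

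\textbf{Instance satisfying \cref{assm:unique-worst-case-occupancy} but not \cref{assm:unique-robust-value}.} We take a cost-robust MDP, $\cU=\cC$ with a single fixed kernel $P$, so $\cP^\pi=\{P\}$ trivially for every $\pi$. We choose one state that is absorbing under $P$, two actions $a_1,a_2$, and $\cC=\{c_1,c_2\}$ with
\[
c_1(s,a_1)=1,\quad c_1(s,a_2)=0,\qquad c_2(s,a_1)=0,\quad c_2(s,a_2)=1 .
\]
For this instance we have $J_{c_i,P}(\pi)=\pi(a_i|s)/(1-\gamma)$. At the uniform policy both models are worst-case. Their action-value functions are $Q^\pi_{c_i,P}(s,a)=c_i(s,a)+\gamma/(2(1-\gamma))$, and these differ between $i=1$ and $i=2$. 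Hence $|\cQ^\pi|=2$ at the uniform policy, and \cref{assm:unique-robust-value} fails while \cref{assm:unique-worst-case-occupancy} holds.

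\textbf{Instance satisfying \cref{assm:unique-robust-value} but not \cref{assm:unique-worst-case-occupancy}.} We use transition uncertainty with a singleton cost and arrange that two distinct worst-case kernels induce identical $Q$-functions. The construction has states $s_1$, $s_A$, $s_B$, where $s_A$ and $s_B$ are absorbing, and the cost is $1$ at both $s_A$ and $s_B$ and $0$ at $s_1$. We take the $sa$-rectangular set $\cP_{s_1,a}=\conv\{\delta_{s_A},\delta_{s_B}\}$ for every action $a$, and all other transitions fixed. Because $V^\pi(s_A)=V^\pi(s_B)=1/(1-\gamma)$ for every kernel and policy, every $P\in\cP$ attains the same value. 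Thus $\cP^\pi=\cP$ is not a singleton for any $\pi$, but all these kernels give the same $Q^\pi_P$. This can be verified directly from the Bellman equation \cref{eq:Bellman-equation}, or it follows from the $sa$-rectangular uniqueness noted before \cref{proposition:r-rect-unique-value}. We take $\mu$ uniform so that \cref{assumption:init-dist} holds, although this is not needed for the proposition.

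The main point requiring care is the worst-case computation in the second instance. We must check that the value at $s_1$ is independent of the kernel choice, which holds because $s_A$ and $s_B$ have equal values. Once that is established, $Q^\pi_P(s_1,a)=\gamma/(1-\gamma)$ for every $P$, and $Q^\pi_P$ coincides at the other states as well. The first instance reduces to a one-line calculation.
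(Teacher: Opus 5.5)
Your proposal is correct and follows essentially the same route as the paper. For one direction, the paper also uses a cost-robust MDP with two action-swapped costs at the uniform policy. For the other, it sends the initial state under two different kernels to states of equal value, taking the cost identically zero so that every $Q^\pi_P$ vanishes; this is your construction with a two-element set in place of the convex hull. Rest your second instance on the direct Bellman-equation check rather than on the general $sa$-rectangular uniqueness remark. That remark is not needed here, and it is delicate in general because worst-case kernels are unconstrained at actions outside the support of $\pi$.
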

\begin{proof}
  \looseness=-1
  \paragraph{Example 1: unique worst-case action-value, non-unique worst-case transition.}
  Consider an RMDP with three states $\{s_1,s_2,s_3\}$ and a single action $\{a_1\}$.
  The initial state is $s_1$, and the cost function is identically zero.
  The uncertainty set consists of two transition kernels:
  $$
  P_1: P_1(s_2 \mid s_1, a_1) = 1\;,\qquad
  P_2: P_2(s_3 \mid s_1, a_1) = 1\;.
  $$
  Since the cost is zero everywhere, the action-value function is identically zero under both $P_1$ and $P_2$ for every policy $\pi$.
  However, both $P_1$ and $P_2$ are worst-case transitions, so $\cP^\pi = \{P_1,P_2\}$.
  This instance satisfies only the second condition.

  \looseness=-1
  \paragraph{Example 2: unique worst-case transition, non-unique worst-case action-value.}
  Now consider a cost-robust MDP with two states $\{s_1,s_2\}$ and two actions $\{a_1,a_2\}$.
  The initial state is $s_1$.
  After taking any action at $s_1$, the agent deterministically transitions to $s_2$ and remains there forever.
  The transition kernel is therefore unique.

  \looseness=-1
  Define two cost functions $c_1$ and $c_2$ as
  $$
  c_1: c_1(s_1, a_1) = 0\;,\; c_1(s_1, a_2) = 1\;,\qquad
  c_2: c_2(s_1, a_1) = 1\;,\; c_2(s_1, a_2) = 0\;.
  $$
  and assume zero cost at state $s_2$ for both cost functions.
  Clearly, the worst-case transition kernel is unique for every policy, so $\abs{\cP^\pi}=1$.

  \looseness=-1
  However, the worst-case action-value function is not unique.
  Consider the policy $\pi$ satisfying $\pi(a_1 \mid s_1) = \pi(a_2 \mid s_1) = 1/2$.
  Under this policy, both $c_1$ and $c_2$ attain the same worst-case value, and hence both belong to $\cU^\pi$.
  The corresponding action-value functions differ:
  \begin{align*}
  &Q^{\pi}_{c_1}(s_1, a_1) = 0\;, \; Q^{\pi}_{c_1}(s_1, a_2) = 1\;,\\
  &Q^{\pi}_{c_2}(s_1, a_1) = 1\;, \; Q^{\pi}_{c_2}(s_1, a_2) = 0\;.
  \end{align*}
  This instance satisfies only the first condition.
\end{proof}

\begin{figure}[t]
  \begin{center}
  \includegraphics[width=0.8\linewidth]{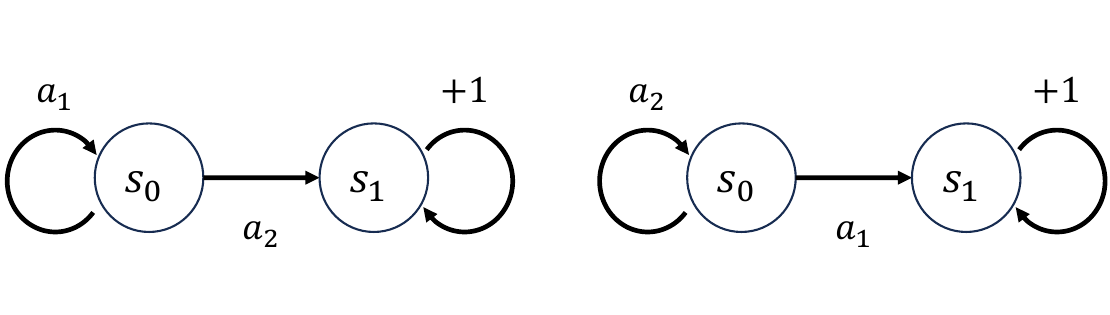}
  \caption{An $s$-rectangular RMDP example satisfying the subgradient dominance property but satisfying neither \cref{assm:unique-worst-case-occupancy} nor \cref{assm:unique-robust-value}. Left: $P_1$. Right: $P_2$.}
  \label{fig:s-rect-subgrad-dom}
  \end{center}
\end{figure}

\begin{proposition}[Restatement of \cref{proposition:s-rectangular-subgrad-dom}]\label{proposition:s-rectangular-subgrad-dom-restated}
There exist $s$-rectangular RMDPs that satisfy the subgradient dominance property but satisfy neither \cref{assm:unique-worst-case-occupancy} nor \cref{assm:unique-robust-value}.
\end{proposition}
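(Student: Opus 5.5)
The plan is to give an explicit small instance in which the subgradient dominance inequality \eqref{eq:strong-subgrad-dom} can be checked by hand, in the spirit of \cref{fig:s-rect-subgrad-dom}. I would use states $s_1,s_2,s_3$. State $s_1$ has two actions $a_1,a_2$; states $s_2,s_3$ have a single action each, so they contribute nothing to $\Pi$. States $s_2$ and $s_3$ are absorbing, with cost $1$ at $s_2$ and cost $0$ everywhere else. Uncertainty sits only at $s_1$, with two kernels:
$$
P_1:\ a_1\mapsto s_2,\ a_2\mapsto s_3, \qquad P_2:\ a_1\mapsto s_3,\ a_2\mapsto s_2 .
$$
We take $\cP_{s_1}=\{P_1(\cdot\mid s_1,\cdot),P_2(\cdot\mid s_1,\cdot)\}$ and let $\cP_{s_2},\cP_{s_3}$ be singletons. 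This is $s$-rectangular in the sense of \eqref{eq:s-rectangular}, since the coupling is only across actions at the same state. To satisfy \cref{assumption:init-dist}, let $\mu$ have full support. Because $s_2,s_3$ are absorbing, the mass $\mu(s_1)$ is visited only at time $0$ and the other terms of $J$ are constant across models and policies.

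Writing $p=\pi(a_1\mid s_1)$, a direct calculation gives
$$
J_{P_1}(\pi)=K+\mu(s_1)\tfrac{\gamma}{1-\gamma}p, \qquad J_{P_2}(\pi)=K+\mu(s_1)\tfrac{\gamma}{1-\gamma}(1-p),
$$
where $K$ is a model-independent constant. Hence $J_{\cP}(\pi)-J_{\cP}(\pi^\star)=\mu(s_1)\tfrac{\gamma}{1-\gamma}\,|p-\tfrac12|$, and the optimum is the uniform policy $p=1/2$.

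\textbf{Failure of both assumptions.} At $p=1/2$ both $P_1$ and $P_2$ are worst case, so \cref{assm:unique-worst-case-occupancy} fails. Their action-value functions also differ: $Q^\pi_{P_1}(s_1,a_1)=\gamma/(1-\gamma)$, while $Q^\pi_{P_2}(s_1,a_1)=0$. So \cref{assm:unique-robust-value} fails too.

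\textbf{Subgradient dominance.} I would split into three cases.
\begin{itemize}
\item If $p=1/2$, the left-hand side of \eqref{eq:strong-subgrad-dom} is $0$, while $G(\pi)\ge 0$ because $\pi'=\pi$ is admissible in the inner maximization. So the inequality holds.
\item If $p>1/2$, the unique worst-case model is $P_1$ and $\partial J_{\cP}(\pi)=\{\nabla J_{P_1}(\pi)\}$ by \cref{lemma:policy subgradient}. Using \eqref{eq:J-gradient} with $\socc{\pi}_{P_1}(s_1)\ge(1-\gamma)\mu(s_1)$, the gap between the two coordinates at $s_1$ is
$$
g(s_1,a_1)-g(s_1,a_2)\ge \mu(s_1)\tfrac{\gamma}{1-\gamma}.
$$
The coordinates at $s_2,s_3$ contribute nothing, since those states have one action. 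Taking $\pi'$ to choose $a_2$ gives $G(\pi)\ge p\,\mu(s_1)\tfrac{\gamma}{1-\gamma}\ge |p-\tfrac12|\,\mu(s_1)\tfrac{\gamma}{1-\gamma}$.
\item The case $p<1/2$ follows by symmetry, swapping the roles of $P_1,P_2$ and of $a_1,a_2$.
\end{itemize}
Together these give \eqref{eq:strong-subgrad-dom} with $D=1$, and therefore with any larger $D$ such as the one in \cref{theorem:PSD-succeeds}.

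The main obstacle is bookkeeping rather than ideas. I need to make sure the occupancy factor in \eqref{eq:J-gradient} points the right way, which is a lower bound on $\socc{\pi}_{P_1}(s_1)$. I also need the contributions of $s_2,s_3$ to cancel in the inner product, which holds because $\pi(\cdot\mid s)$ is fixed at single-action states. Finally, I should double-check that the set $\{P_1,P_2\}$ at $s_1$, with coupling across actions, is a legitimate $\cP_{s_1}\subseteq\Delta(\S)^{\aA}$; it is, by definition.
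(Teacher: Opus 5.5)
Your proposal is correct and follows essentially the paper's argument: a two-kernel, mirror-image $s$-rectangular instance with uncertainty at a single state, so that the robust cost is V-shaped in $p=\pi(a_1\mid s_1)$ and minimized at $p=1/2$, where both kernels are worst case with different $Q$-functions, followed by a case check of \eqref{eq:strong-subgrad-dom}. Your variant is, if anything, tighter than the paper's sketch: it uses two absorbing successor states instead of the paper's self-loop at $s_0$ (whose displayed formula $C\max\{p,1-p\}$ overlooks that self-loop, though the conclusion survives), it has full-support $\mu$, and it computes $G(\pi)$ directly from \eqref{eq:J-gradient}, which gives $D=1$. The only cosmetic fix is to give $s_2,s_3$ two identical actions rather than one, since the model uses a common action set $\mathcal{A}$; this keeps $Q(s,\cdot)$ constant at those states, so their coordinates still contribute nothing to $G(\pi)$.
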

\begin{proof}
 Consider the RMDP illustrated in \cref{fig:s-rect-subgrad-dom} with two states $\{s_0,s_1\}$ and two actions $\{a_1,a_2\}$. 
 State $s_0$ is the initial state and $s_1$ is absorbing.
 State $s_1$ incurs $+1$ cost and $s_0$ incurs $0$ cost.
 
 \looseness=-1
 It has two transition kernels $P_1$ and $P_2$ defined as 
 $$
 P_1: P_1(s_0 \mid s_0, a_1) = 1\;,\quad P_1(s_1 \mid s_0, a_2) = 1\;,\qquad
 P_2: P_2(s_1 \mid s_0, a_1) = 1\;,\quad P_2(s_0 \mid s_0, a_2) = 1\;.
 $$
 The uncertainty set $\cU = \brace{P_1, P_2}$ is $s$-rectangular since the uncertainty arises only at state $s_0$.

 \looseness=-1
 We first verify that this RMDP is subgradient-dominant.
 For a policy $\pi$ defined by $\pi(a_1 \mid s_0) = p$ and $\pi(a_2 \mid s_0) = 1-p$ for some $p \in [0,1]$, the robust total cost is computed as
 $$
 J_{\cU}(\pi) = \max \brace*{\frac{(1-p)\gamma}{1-\gamma}, \frac{p\gamma}{1-\gamma}} = C \max\{p, 1-p\}\quad \text{where } C \df \frac{\gamma}{1-\gamma}\;.
 $$
 Let $f(p) \df \max\brace{p, 1-p}$ for $p \in [0,1]$. The function $f$ is convex and attains its minimum at $p=0.5$. The subgradient satisfies $\partial f(p) = \brace{1}$ for $p > 0.5$ and $\partial f(p) = \brace{-1}$ for $p < 0.5$. Thus, 
  $$
f(p) - \min_{p' \in [0,1]} f(p') = f(p) - f(0.5) \leq \max_{p' \in [0, 1]} (p - p') g \quad \text{where } g \in \partial f(p)\;.
  $$
  This demonstrates that $J_\cU(\pi)$ is subgradient-dominant.

  \looseness=-1
  Next, consider $\pi(a_1 | s_0) = \pi(a_2 | s_0) = 0.5$. Since both $P_1$ and $P_2$ are worst-case transition kernels under this policy, \cref{assm:unique-worst-case-occupancy} is not satisfied.
  Additionally, the robust action-value functions under $P_1$ and $P_2$ differ:
  \begin{align*}
    &Q^{\pi}_{P_1}(s_0, a_1) = \frac{0.5\gamma^2}{1-\gamma}\;, \quad Q^{\pi}_{P_2}(s_0, a_1) = \frac{\gamma}{1-\gamma}\;.
  \end{align*}
  Thus, \cref{assm:unique-robust-value} is also not satisfied.
\end{proof}